\newtheorem{theorem}{Theorem}
\newtheorem{example}{Example}
\newtheorem{definition}{Definition}
\newtheorem{lemma}{Lemma}
\newtheorem{remark}{Remark}
\DeclareMathOperator\erfc{erfc}
\begin{document}

\title{The Gauss Hypergeometric Covariance Kernel for Modeling Second-Order Stationary Random Fields in Euclidean Spaces: its Compact Support, Properties and Spectral Representation}

%\runtitle{Gauss Hypergeometric Covariance Kernel}

%\author[1,2]{X. Emery}
%\address[1]{Department of Mining Engineering, University of Chile, Santiago, Chile.}
%\address[2]{Advanced Mining Technology Center, University of Chile, Santiago, Chile.}

%\author[3]{A. Alegr\'ia\footnote{Corresponding author. Email: alfredo.alegria@usm.cl}}
%\address[1]{Departamento de Matem\'atica, Universidad T{\'e}cnica Federico Santa Mar{\'i}a, Valpara{\'i}so, Chile.}

%\author{Xavier Emery and Alfredo Alegr\'ia}
%\date{}

\author[1,2]{Xavier Emery}
\affil[1]{\small{Department of Mining Engineering, University of Chile, Avenida Beauchef 850, Santiago 8370448, Chile.}}
\affil[2]{Advanced Mining Technology Center, University of Chile, Avenida Beauchef 850, Santiago 8370448, Chile.}

\author[3]{Alfredo Alegr\'ia\footnote{Corresponding author. Email: alfredo.alegria@usm.cl}}
\affil[3]{Departamento de Matem\'atica, Universidad T{\'e}cnica Federico Santa Mar{\'i}a, Valpara{\'i}so, Chile.}

\maketitle

\begin{abstract}
This paper presents a parametric family of compactly-supported positive semidefinite kernels aimed to model the covariance structure of second-order stationary isotropic random fields defined in the $d$-dimensional Euclidean space. Both the covariance and its spectral density have an analytic expression involving the hypergeometric functions ${}_2F_1$ and ${}_1F_2$, respectively, and four real-valued parameters related to the correlation range, smoothness and shape of the covariance. The presented hypergeometric kernel family contains, as special cases, the spherical, cubic, penta, Askey, generalized Wendland and truncated power covariances and, as asymptotic cases, the Mat\'ern, Laguerre, Tricomi, incomplete gamma and Gaussian covariances, among others. The parameter space of the univariate hypergeometric kernel is identified and its functional properties --- continuity, smoothness, transitive upscaling (mont\'ee) and downscaling (descente) --- are examined. Several sets of sufficient conditions are also derived to obtain valid stationary bivariate and multivariate covariance kernels, characterized by four matrix-valued parameters. Such kernels turn out to be versatile, insofar as the direct and cross-covariances do not necessarily have the same shapes, correlation ranges or behaviors at short scale, thus associated with vector random fields whose components are cross-correlated but have different spatial structures.

\vspace{0.5cm}

\noindent \emph{Keywords:} Positive semidefinite kernels; Spectral density; Direct and cross-covariances; Generalized hypergeometric functions; Conditionally negative semidefinite matrices; Multiply monotone functions.

\end{abstract}

\section{Introduction}

Geostatistical techniques such as kriging or conditional simulation are widely used to interpolate regionalized data, in order to address spatial prediction problems or to quantify uncertainty at locations without data \citep{chiles2009geostatistics}. These techniques rely on a modeling of the spatial correlation structure of one or more regionalized variables, viewed as realizations of as many spatial random fields. Application domains include natural (mineral, oil and gas) resources assessment, groundwater hydrology, soil and environmental sciences, among many others, where it is not uncommon to work with up to a dozen variables \citep{ahmed, emery2021, hohn, webster}. This motivates the need for univariate and multivariate covariance (positive semidefinite) kernels that allow a flexible parameterization of the relevant properties such as the correlation range or the short-scale regularity. In practical applications, the random fields under study are often assumed to be second-order stationary, i.e., their first- and second-order moments (expectation and covariance) exist and are invariant under spatial translation \citep{chiles2009geostatistics, cressie, wackernagel}. The stationarity assumption is made throughout this work, which implies that the covariance kernel for two input vectors $\boldsymbol{s}$ and $\boldsymbol{s}^\prime$ is actually a function of the separation $\boldsymbol{h}=\boldsymbol{s}-\boldsymbol{s}^\prime$ between these vectors (here $\boldsymbol{s}$ and $\boldsymbol{s}^\prime$ are elements of the $d$-dimensional Euclidean space $\mathbb{R}^d$) and that its Fourier transform (spectral density of the covariance kernel) is also a function of a single vectorial argument $\boldsymbol{u} \in \mathbb{R}^d$ \citep{chiles2009geostatistics, wackernagel}.

Many parametric families of stationary covariance kernels have been proposed in the past decades, the most widespread being the Mat\'ern kernel \citep{Matern} that allows controlling the behavior of the covariance at the origin. This kernel has been extended to the multivariate case \citep{Apanasovich, Gneiting2010}, offering a more flexible parameterization than the traditional linear model of coregionalization \citep{wackernagel}, but still suffers from restrictive conditions on its parameters to be a valid coregionalization model. 

Compactly-supported covariance kernels possess nice computational properties that make them of particular interest for applications, insofar as they are suitable to likelihood-based inference and kriging in the presence of large data sets when combined with algorithms for solving sparse systems of linear equations \citep{furrer2006covariance,kaufman2008covariance}, and to specific simulation algorithms such as circulant-embedding and FFT-based approaches \citep{chiles2009geostatistics, Dietrich, pardo, wood1994simulation}. However, although many families of such kernels have been elaborated for the modeling of univariate random fields, such as the spherical, cubic, Askey, Wendland and generalized Wendland families \citep{Askey, chiles2009geostatistics, Hubbert, matheron1965variables, Wendland}, so far there is still a lack of flexible families of multivariate compactly-supported covariance kernels, with few notable exceptions \citep{porcu2013, daley}.

This paper deals with the design of a wide parametric family of compactly-supported covariance kernels for second-order stationary univariate and multivariate random fields in $\mathbb{R}^d$, and with the determination of their parameter space, functional properties, spectral representations and asymptotic behavior. The intended family of covariance kernels will contain all the above-mentioned kernels, as well as the Mat\'ern kernel as an asymptotic case. Estimating the kernel parameters from a set of experimental data, comparing estimation approaches or examining the impact of the parameters in spatial prediction or simulation outputs are out of the scope of this paper and are left for future research.  The outline is the following: Section \ref{univariate} presents the univariate kernel and its properties. This kernel is then extended to multivariate random fields in Section \ref{multivariate} and to specific bivariate random fields in Section \ref{bivariate}. Conclusions follow in Section \ref{conclusions}, while technical definitions, lemmas and proofs are deferred to Appendices \ref{appendixA} and \ref{appendixB}.

\section{A class of stationary univariate compactly-supported covariance kernels}
\label{univariate}

\subsection{Notation}

For $k, k^\prime \in \mathbb{N}$, $\alpha_1, \hdots, \alpha_k, \beta_1, \hdots, \beta_{k^\prime} \in \mathbb{R}$, the generalized hypergeometric function ${}_kF_{k^\prime}$ in $\mathbb{R}$ is defined by the following power series \citep[formula 16.2.1]{olver2010nist}:
\begin{equation}
\label{pFq}
    {}_kF_{k^\prime}(\alpha_1, \hdots, \alpha_k; \beta_1, \hdots, \beta_{k^\prime}; x) = 1+\sum_{n=1}^{+\infty} \frac{\prod_{i=1}^k \Gamma(\alpha_i+n) \prod_{j=1}^{k^\prime} \Gamma(\beta_j)}{\prod_{i=1}^k \Gamma(\alpha_i) \prod_{j=1}^{k^\prime} \Gamma(\beta_j+n)} \frac{x^n}{n!}, \quad x \in \mathbb{R},
\end{equation}
where $\Gamma$ is Euler's gamma function. The series \eqref{pFq} converges for any $x \in \mathbb{R}$ if $k<k^\prime+1$, for any $x \in ]-1,1[$ if $k=k^\prime+1$ and also for $x=\pm 1$ if $k=k^\prime+1$ and $\sum_{i=1}^k \alpha_i < \sum_{j=1}^{k^\prime} \beta_j$. Specific cases include the confluent hypergeometric limit function ${}_0F_1$, Kummer's confluent hypergeometric function ${}_1F_1$ and Gauss hypergeometric function ${}_2F_1$. 

\subsection{Kernel construction}

Consider the isotropic function $\widetilde{G}_d(\cdot; a,\alpha,\beta,\gamma)$ defined in $\mathbb{R}^d$ by:
\begin{equation}
    \label{spectral_density}
    \begin{split}
    \widetilde{G}_d({\boldsymbol u}; a,\alpha,\beta,\gamma) &= \widetilde{g}_d(\|{\boldsymbol u}\|; a,\alpha,\beta,\gamma) \\
    &=  \zeta_d(a,\alpha,\beta,\gamma) \, {}_1 F_2\left( \alpha;\beta,\gamma; -(\pi a \|{\boldsymbol u}\|)^2 \right), \quad {\boldsymbol u} \in \mathbb{R}^d,
    \end{split}
\end{equation}
where $\|\cdot\|$ denotes the Euclidean norm, $d$ is a positive integer, $(a,\alpha,\beta,\gamma)$ are positive scalar parameters, and $\zeta_d(a,\alpha,\beta,\gamma)$ is a normalization factor that will be determined later. \cite{cho2020rational} proved that $\widetilde{G}_d(\cdot; a,\alpha,\beta,\gamma)$ is nonnegative under the following conditions:
\begin{itemize}
    \item $\alpha > 0$;
    \item $2(\beta-\alpha)(\gamma-\alpha) \geq \alpha$;
    \item $2(\beta+\gamma) \geq 6\alpha + 1$.
\end{itemize}

Hereinafter, $\mathcal{P}_0$ denotes the set of triplets $(\alpha,\beta,\gamma)$ of $\mathbb{R}_+^3$ satisfying these three conditions; note that the last two conditions imply, in particular, that $\beta > \alpha$ and $\gamma > \alpha$.
Under an additional assumption of integrability, $\widetilde{G}_d(\cdot; a,\alpha,\beta,\gamma)$ is the spectral density associated with a stationary isotropic covariance kernel $G_d(\cdot; a,\alpha,\beta,\gamma)$ in $\mathbb{R}^d$. 
Let $g_d(\cdot; a,\alpha,\beta,\gamma): \mathbb{R}_+ \to \mathbb{R}$ denote the radial part of such a covariance kernel: $G_d({\boldsymbol h}; a,\alpha,\beta,\gamma) = g_d(\|{\boldsymbol h}\|; a,\alpha,\beta,\gamma)$ for ${\boldsymbol h} \in \mathbb{R}^d$. Following the scaling conventions used by \cite{steinweiss} to define the Fourier and inverse Fourier transforms, $g_d(\cdot; a,\alpha,\beta,\gamma)$ is the Hankel transform of order $d$ of $\widetilde{g}_d(\cdot; a,\alpha,\beta,\gamma)$, i.e.:
\begin{equation}
\label{hankel}
g_d(r; a,\alpha,\beta,\gamma)  =  \frac{2\pi \zeta_d(a,\alpha,\beta,\gamma)}{r^{\frac{d}{2}-1}} \int_0^{+\infty} \rho^{\frac{d}{2}}  J_{\frac{d}{2}-1}(2\pi\rho r) {}_1 F_2\left(\alpha;\beta,\gamma; -(\pi a \rho)^2\right) \text{d}\rho, \quad r > 0,
\end{equation}
with $J_\mu$ denoting the Bessel function of the first kind of order $\mu$. By using formulae 16.5.2 and 10.16.9 of \cite{olver2010nist}, the generalized hypergeometric function ${}_1 F_2$ can be written as a beta mixture of Bessel functions of the first kind:
\begin{equation}
\label{mixbessel}
\begin{split}
  {}_1 F_2&\left(\alpha;\beta,\gamma; -(\pi a \rho)^2)\right)   \\
  &  =      \frac{\Gamma(\beta)}{\Gamma(\alpha)\Gamma(\beta-\alpha)} \int_0^1 t^{\alpha-1} (1-t)^{\beta-\alpha-1} {}_0 F_1\left(; \gamma;-t(\pi a \rho)^2 \right) \text{d}t \\
  &  =      \frac{\Gamma(\beta)\Gamma(\gamma)}{\Gamma(\alpha)\Gamma(\beta-\alpha)} \int_0^1 t^{\alpha-1} (1-t)^{\beta-\alpha-1} \left( \pi a \rho \sqrt{t} \right)^{1-\gamma}  J_{\gamma-1}\left(2\pi a \rho \sqrt{t} \right)  \text{d}t.\\
\end{split}
\end{equation}

Owing to Fubini's theorem, the radial function \eqref{hankel} is found to be
\begin{equation}
\label{fubini}
\begin{split}
    g_d&(r; a,\alpha,\beta,\gamma)  = \frac{2\pi (\pi a)^{1-\gamma} \Gamma(\beta)\Gamma(\gamma)}{r^{\frac{d}{2}-1} \Gamma(\alpha)\Gamma(\beta-\alpha)}  \zeta_d(a,\alpha,\beta,\gamma)  \\
    & \times 
    \int_0^1 t^{\alpha-1/2-\gamma/2} (1-t)^{\beta-\alpha-1}   \int_0^{+\infty} \rho^{\frac{d}{2} + 1 - \gamma}  J_{\frac{d}{2}-1}(2\pi\rho r)  J_{\gamma-1}\left(2\pi a \rho \sqrt{t} \right)  \text{d}\rho \text{d}t.
\end{split}
\end{equation}

For $\gamma>\frac{d}{2}$, the last integral in \eqref{fubini} is convergent and can be evaluated by using formula 6.575.1 of \cite{grad}:
\begin{equation}
\label{hygeo6}
\begin{split}
 g_d&(r; a,\alpha,\beta,\gamma)  \\
 &=  \begin{cases}
 \frac{\pi^{-\frac{d}{2}}a^{-d}\Gamma(\beta) \Gamma(\gamma) \zeta_d(a,\alpha,\beta,\gamma)}{\Gamma(\alpha) \Gamma(\beta-\alpha) \Gamma(\gamma - \frac{d}{2})}  \int_{\left(\frac{r}{a}\right)^2}^1 t^{\alpha-\gamma} (1-t)^{\beta-\alpha-1} \left( t - \left(\frac{r}{a}\right)^2 \right)^{\gamma - \frac{d}{2} - 1} \text{d}t &  \text{if } 0<r\leq a\\
0  & \text{if } r> a.
\end{cases}  
\end{split}
\end{equation}

The function $g_d(\cdot; a,\alpha,\beta,\gamma)$ so defined can be extended by continuity at $r=0$ if $\alpha>\frac{d}{2}$ (\citealp{grad}, formulae 3.191.3):
\begin{equation*}
    g_d(0; a,\alpha,\beta,\gamma)  = 
\frac{\pi^{-\frac{d}{2}}a^{-d} \Gamma(\alpha-\frac{d}{2}) \Gamma(\beta) \Gamma(\gamma) \zeta_d(a,\alpha,\beta,\gamma)}{\Gamma(\alpha)   \Gamma(\beta - \frac{d}{2}) \Gamma(\gamma - \frac{d}{2})}. 
\end{equation*}

This value is equal to one when considering the following normalization factor:
\begin{equation}
\label{hygeo8}
\zeta_d(a,\alpha,\beta,\gamma) = \frac{\pi^{\frac{d}{2}} a^d \Gamma(\alpha)   \Gamma(\beta - \frac{d}{2}) \Gamma(\gamma - \frac{d}{2})}{\Gamma(\alpha-\frac{d}{2}) \Gamma(\beta) \Gamma(\gamma)}.  
\end{equation}

\subsection{Analytic expressions and parameter space}

By substituting \eqref{hygeo8} in \eqref{spectral_density} and \eqref{hygeo6}, one obtains the following expressions for the spectral density and the covariance kernel:
\begin{equation}
\label{spectraldensity2}
    \widetilde{G}_d({\boldsymbol u}; a,\alpha,\beta,\gamma) =   \frac{\pi^{\frac{d}{2}} a^d \Gamma(\alpha)   \Gamma(\beta - \frac{d}{2}) \Gamma(\gamma - \frac{d}{2})}{\Gamma(\alpha-\frac{d}{2}) \Gamma(\beta) \Gamma(\gamma)} \, {}_1 F_2\left( \alpha;\beta,\gamma; -(\pi a \|{\boldsymbol u}\|)^2 \right), \quad {\boldsymbol u} \in \mathbb{R}^d,
\end{equation}
and
\begin{equation}
\label{hygeo9}
\begin{split}
  G_d&({\boldsymbol h}; a,\alpha,\beta,\gamma)  \\
  &=  \begin{cases}
 \frac{ \Gamma(\beta - \frac{d}{2})}{\Gamma(\alpha-\frac{d}{2}) \Gamma(\beta-\alpha)}  \int_{\left(\frac{\|{\boldsymbol h}\|}{a}\right)^2}^1 t^{\alpha-\gamma} (1-t)^{\beta-\alpha-1} \left( t - \left(\frac{\|{\boldsymbol h}\|}{a}\right)^2 \right)^{\gamma - \frac{d}{2} - 1} \text{d}t &  \text{if } 0\leq \|{\boldsymbol h}\|\leq a\\
0  & \text{if } \|{\boldsymbol h}\|> a.
\end{cases}  
\end{split}
\end{equation}

Hereinafter, $G_d(\cdot; a,\alpha,\beta,\gamma)$ will be referred to as the Gauss hypergeometric covariance, the reason being that it has the following analytic expression, obtained from \eqref{hygeo9} by using formula II.1.4 of \cite{matheron1965variables}:
\begin{equation}
\label{2F1}
\begin{split}
G_d({\boldsymbol h}; a,\alpha,\beta,\gamma)  = & \frac{\Gamma(\beta-\frac{d}{2}) \Gamma(\gamma-\frac{d}{2})}{\Gamma(\beta-\alpha+\gamma-\frac{d}{2}) \Gamma(\alpha-\frac{d}{2})} \left( 1 - \frac{\|{\boldsymbol h}\|^2}{a^2} \right)_+^{\beta - \alpha+\gamma -\frac{d}{2}-1} \\
& \times {}_2 F_1 \left(\beta-\alpha,\gamma-\alpha; \beta-\alpha+\gamma-\frac{d}{2};  \left(1-\frac{\|{\boldsymbol h}\|^2}{a^2}\right)_+ \right), \quad {\boldsymbol h} \in \mathbb{R}^d,
\end{split}
\end{equation}
with $()_+$ denoting the positive part function. A wealth of closed-form expressions can be obtained for specific values of the parameters $\alpha$, $\beta$ and $\gamma$, see examples in forthcoming subsections. 

Also, several algorithms and software libraries are available to accurately compute the confluent hypergeometric limit function ${}_0F_1$ and the Gauss hypergeometric function ${}_2F_1$ \citep{galassi2009gnu, Johansson, Johansson2, pearson}, allowing the numerical calculation of both the covariance \eqref{2F1} and its spectral density \eqref{spectraldensity2}, the latter being written as a beta mixture of ${}_0F_1$ function as in \eqref{mixbessel}. Consequently, the proposed hypergeometric kernel can be used without any difficulty for kriging or for simulation (in the scope of Gaussian random fields) based on matrix decomposition \citep{Alabert, Davis1987}, Gibbs sampling \citep{GibbsSRF, Galli, Lantu2012}, discrete \citep{chiles2009geostatistics, Dietrich, pardo, wood1994simulation} or continuous \citep{Arroyo2020, emery2016, Lantuejoul, Shinozuka1971} Fourier approaches. Covariance (positive semidefinite) kernels also have important applications in various other branches of mathematics, such as numerical analysis, scientific computing and machine learning, where the use of compactly-supported kernels yields sparse Gram matrices and implies an important gain in storage and computation.

The expression \eqref{hygeo9} bears a resemblance to the Buhmann covariance kernels \citep{buhmann1998, Buhmann}, to the generalized Wendland covariance kernels \citep{Bevilacqua2020, Bevilacqua2019, Gneiting2002, zastavnyi2006some} and to the scale mixtures of Wendland kernels defined by  \cite{porcu2013}, all of which are also compactly supported. Our proposal, nevertheless, escapes from these three families: on the one hand, Buhmann's integral cannot yield the kernel \eqref{hygeo9} due to the restrictions on its parameters (the integrand contains a term $(1-t^\delta)$ with $\delta \leq \frac{1}{2}$ instead of $\delta=1$ in our case). On the other hand, the definition of the generalized Wendland kernel uses a different expression of the integrand, with a $t^2$ instead of a $t$ in one of the factors; a similar situation occurs for Porcu's mixtures of Wendland kernels, which use $\| \boldsymbol h \|$ instead of $\| \boldsymbol h \|^2$ in the integrand. We will see, however, that the family of generalized Wendland covariances is included in the Gauss hypergeometric class of covariance kernels (Section \ref{wendland}). Other compactly-supported covariance kernels involving the hypergeometric function ${}_2F_1$ have been proposed by \cite{porcu2013} and \cite{Porcu2014}, but none coincides with \eqref{2F1}.\\

The previously defined nonnegativity and integrability conditions yield the following restrictions on the parameters to provide a valid univariate covariance kernel.

\begin{theorem}[Parameter space]
\label{theorem1}
The Gauss hypergeometric covariance \eqref{2F1} is a valid covariance kernel in $\mathbb{R}^d$ and, consequently, its spectral density \eqref{spectraldensity2} is nonnegative and integrable, if the following sufficient conditions hold:
\begin{itemize}
    \item $a > 0$;
    \item $\alpha > \frac{d}{2}$;
    \item $2(\beta-\alpha)(\gamma-\alpha) \geq \alpha$;
    \item $2(\beta+\gamma) \geq 6\alpha + 1$.
\end{itemize}
\end{theorem}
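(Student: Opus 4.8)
The plan is to invoke Bochner's theorem in its isotropic form: a continuous function on $\mathbb{R}^d$ is positive semidefinite (hence a valid stationary covariance kernel) whenever it is the inverse Fourier transform of a nonnegative integrable function. Since the construction in \eqref{hankel}--\eqref{2F1} already identifies $G_d(\cdot;a,\alpha,\beta,\gamma)$ as the Hankel (inverse Fourier) transform of $\widetilde{G}_d(\cdot;a,\alpha,\beta,\gamma)$, the theorem reduces to two claims about the spectral density: that $\widetilde{G}_d \geq 0$ and that $\widetilde{G}_d \in L^1(\mathbb{R}^d)$. The role of $a>0$ is purely that of a scale parameter and needs no further comment.

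First I would dispatch nonnegativity. The condition $\alpha > \tfrac{d}{2} \geq \tfrac12 > 0$ supplies the requirement $\alpha > 0$ of \cite{cho2020rational}, while the two inequalities $2(\beta-\alpha)(\gamma-\alpha) \geq \alpha$ and $2(\beta+\gamma) \geq 6\alpha+1$ are assumed verbatim; hence $(\alpha,\beta,\gamma) \in \mathcal{P}_0$ and $\widetilde{G}_d \geq 0$. I would also record here that, as noted after the definition of $\mathcal{P}_0$, these inequalities force $\beta > \alpha$ and $\gamma > \alpha$, so that $\beta,\gamma > \alpha > \tfrac{d}{2}$; this guarantees $\gamma > \tfrac{d}{2}$ (needed for convergence of the inner Bessel integral in \eqref{fubini}) and makes all the gamma-function arguments $\alpha-\tfrac d2$, $\beta-\tfrac d2$, $\gamma-\tfrac d2$ positive, so that $\zeta_d$ in \eqref{hygeo8} and the closed forms \eqref{spectraldensity2}--\eqref{2F1} are well-defined and finite.

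The substantial step is integrability, and it is where I expect the main difficulty. Passing to radial coordinates, $\int_{\mathbb{R}^d}\widetilde{G}_d\,\mathrm{d}\boldsymbol{u}$ is a constant multiple of $\int_0^{+\infty} {}_1F_2(\alpha;\beta,\gamma;-(\pi a\rho)^2)\,\rho^{d-1}\,\mathrm{d}\rho$. Near $\rho=0$ the integrand is $O(\rho^{d-1})$ because ${}_1F_2(\cdot;0)=1$, so convergence at the origin is immediate; everything hinges on the tail. I would extract the large-argument behavior of ${}_1F_2$ from the beta--Bessel mixture \eqref{mixbessel}: inserting the oscillatory asymptotics of $J_{\gamma-1}$ and analyzing the resulting $t$-integral by its endpoint contributions yields an algebraic term of order $\rho^{-2\alpha}$ (from the region $t = O(\rho^{-2})$ near $t=0$) together with an oscillatory term of amplitude $\rho^{\alpha-\beta-\gamma+1/2}$ (from the endpoint $t=1$, where $(1-t)^{\beta-\alpha-1}$ governs the decay). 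The factor $\rho^{d-1}$ then makes the algebraic part integrable at infinity precisely when $\alpha > \tfrac{d}{2}$, whereas the oscillatory part is absolutely integrable as soon as $\beta+\gamma > \alpha + d + \tfrac12$, which is implied by $2(\beta+\gamma) \geq 6\alpha+1$ together with $\alpha > \tfrac d2$ (indeed $\beta+\gamma \geq 3\alpha+\tfrac12$, and $3\alpha+\tfrac12 > \alpha + d + \tfrac12 \iff 2\alpha > d$). Thus both decay rates are controlled and $\widetilde{G}_d \in L^1(\mathbb{R}^d)$.

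With nonnegativity and integrability in hand, Bochner's theorem applies and $G_d$ is a valid covariance kernel, its explicit form being the one computed in \eqref{2F1}; the nonnegativity and integrability of \eqref{spectraldensity2} are exactly the two properties just verified. The main obstacle is the tail estimate of ${}_1F_2$: one must pin down both decay exponents and confirm that the two parameter inequalities conspire so that the binding constraint is the single condition $\alpha > \tfrac{d}{2}$. A tempting shortcut --- since $\widetilde{G}_d \geq 0$, its integral equals $g_d(0)$, which the evaluation leading to \eqref{hygeo8} shows is finite exactly when $\alpha > \tfrac d2$ --- would streamline matters, but I would avoid relying on it alone, so as to sidestep the circularity of using the inverse-transform value before integrability has been established; the honest verification is the direct asymptotic estimate above.
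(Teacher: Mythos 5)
Your proposal is correct and follows essentially the same route as the paper, which does not give a standalone proof of Theorem \ref{theorem1} but establishes it through the construction of Section \ref{univariate}: nonnegativity of \eqref{spectral_density} from the three conditions of \cite{cho2020rational} (with $\alpha>\frac d2$ subsuming $\alpha>0$, and $\beta,\gamma>\alpha>\frac d2$ guaranteeing that the Hankel integral and the gamma factors in \eqref{hygeo8}--\eqref{2F1} are well defined), followed by Bochner's theorem. The one place where you add genuine content is the integrability of $\widetilde{G}_d$, which the paper introduces only as ``an additional assumption'': your tail analysis via the large-argument behavior of ${}_1F_2$ --- an algebraic term of order $\rho^{-2\alpha}$ plus an oscillatory term of amplitude $\rho^{\alpha-\beta-\gamma+1/2}$ --- gives the correct exponents (they agree with the standard asymptotics, e.g.\ formula 16.11 of \cite{olver2010nist}, which you could cite directly rather than re-deriving from the beta--Bessel mixture), and your bookkeeping showing that $2(\beta+\gamma)\geq 6\alpha+1$ together with $\alpha>\frac d2$ controls the oscillatory part while $\alpha>\frac d2$ alone controls the algebraic part is exactly what is needed. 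Your caution about the circularity of the shortcut $\int\widetilde{G}_d = g_d(0)$ is also well placed; the direct estimate is the honest argument.
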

In the following, $\mathcal{P}_d$ denotes the set of triplets $(\alpha,\beta,\gamma)$ of $\mathbb{R}_+^3$ satisfying the last three conditions of Theorem \ref{theorem1} (in passing, this notation is consistent with the previous definition of $\mathcal{P}_0$) and $\mathcal{G}_d$ denotes the set of kernels of the form $\sigma^2 G_d(\cdot; a,\alpha,\beta,\gamma)$ with $\sigma>0$, $a>0$ and $(\alpha,\beta,\gamma) \in \mathcal{P}_d$.  
These kernels are compactly supported, being identically zero outside the ball of radius $a$. Also note that $\mathcal{P}_{d} \subsetneq \mathcal{P}_{d^\prime}$ for any $d>d^\prime\geq0$.

\subsection{Main properties}

\begin{theorem} [Positive definiteness]
\label{positivity}
The $d$-dimensional Gauss hypergeometric covariance kernel \eqref{2F1} is positive definite, not just semidefinite, in $\mathbb{R}^d$.
\end{theorem}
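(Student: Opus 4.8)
The plan is to exploit the spectral representation guaranteed by Theorem~\ref{theorem1}. Its conditions ensure that $\widetilde{G}_d(\cdot; a,\alpha,\beta,\gamma)$ is a nonnegative, integrable function whose inverse Fourier transform is $G_d$, so Bochner's theorem gives, for any finite collection of distinct points $\boldsymbol{s}_1,\dots,\boldsymbol{s}_n \in \mathbb{R}^d$ and any complex coefficients $c_1,\dots,c_n$,
\begin{equation*}
\sum_{j=1}^n \sum_{k=1}^n c_j \overline{c_k}\, G_d(\boldsymbol{s}_j-\boldsymbol{s}_k; a,\alpha,\beta,\gamma) = \int_{\mathbb{R}^d} \Bigl| \sum_{j=1}^n c_j e^{2\pi i \boldsymbol{u}\cdot\boldsymbol{s}_j} \Bigr|^2 \widetilde{G}_d(\boldsymbol{u}; a,\alpha,\beta,\gamma)\, \mathrm{d}\boldsymbol{u}.
\end{equation*}
The right-hand side is manifestly nonnegative; to prove strict positive definiteness I must show that it is strictly positive whenever the $c_j$ are not all zero.

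First I would argue that the spectral density is strictly positive on a set of positive Lebesgue measure. Since ${}_1F_2(\alpha;\beta,\gamma;0)=1$, the value at the origin is $\widetilde{G}_d(\boldsymbol{0}; a,\alpha,\beta,\gamma)=\zeta_d(a,\alpha,\beta,\gamma)$; under the parameter constraints of Theorem~\ref{theorem1} every gamma factor appearing in \eqref{hygeo8} has a positive argument, whence $\zeta_d>0$. Because ${}_1F_2$ is entire, $\widetilde{G}_d$ is continuous, so positivity at the origin propagates to a whole open ball around $\boldsymbol{0}$, a set of positive measure.

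The main obstacle is to ensure that the exponential-sum factor $g(\boldsymbol{u})=\sum_{j} c_j e^{2\pi i \boldsymbol{u}\cdot\boldsymbol{s}_j}$ does not vanish throughout the region where $\widetilde{G}_d>0$. Here I would invoke analyticity: for distinct points $\boldsymbol{s}_j$ the characters $\boldsymbol{u}\mapsto e^{2\pi i \boldsymbol{u}\cdot\boldsymbol{s}_j}$ are linearly independent, so $g$ is a nontrivial real-analytic (indeed entire) function on $\mathbb{R}^d$ whenever the $c_j$ are not all zero, and the zero set of a nonzero real-analytic function has Lebesgue measure zero. Consequently $|g|^2>0$ almost everywhere, in particular on a positive-measure subset of the ball where $\widetilde{G}_d>0$, so the integrand $|g|^2\,\widetilde{G}_d$ is strictly positive on a set of positive measure. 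This forces the integral, and hence the quadratic form, to be strictly positive, establishing that $G_d$ is positive definite and not merely semidefinite.
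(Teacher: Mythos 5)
Your proof is correct, and its overall strategy is the same as the paper's: show that the spectral density is strictly positive on a nonempty open set, then conclude strict positive definiteness of the kernel. The differences are in how each half is handled. For the first half, the paper argues that the complex extension of $x \mapsto {}_1F_2(\alpha;\beta,\gamma;x)$ is a nonzero entire function, so its zeros are isolated and the density is positive on some open set; you instead observe that $\widetilde{G}_d(\boldsymbol{0}) = \zeta_d(a,\alpha,\beta,\gamma) > 0$ (all gamma arguments in \eqref{hygeo8} being positive under the hypotheses of Theorem~\ref{theorem1}) and invoke continuity to get positivity on a ball around the origin --- a more direct and slightly cleaner route that additionally pins down \emph{where} the density is positive. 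For the second half, the paper outsources the implication ``support of the spectral density contains a nonempty open set $\Rightarrow$ positive definite'' to a citation, whereas you prove it from scratch via Bochner's representation of the quadratic form, the linear independence of the characters $\boldsymbol{u} \mapsto e^{2\pi i \boldsymbol{u}\cdot\boldsymbol{s}_j}$ for distinct nodes, and the fact that the zero set of a nonzero real-analytic function has Lebesgue measure zero. Your version is self-contained where the paper's is not, at the cost of a few extra lines; both are sound.
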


\begin{theorem} [Restriction to subspaces]
\label{restriction}
The restriction of the $d$-dimensional Gauss hypergeometric covariance kernel \eqref{2F1} to any subspace $\mathbb{R}^{d-k}$, $k \in \{0,\hdots,d-1\}$, belongs to the family of Gauss hypergeometric covariance kernels $\mathcal{G}_{d-k}$.
\end{theorem}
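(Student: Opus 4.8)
The plan is to reduce the statement to a one-line algebraic identity between radial functions, using isotropy. Because $G_d(\cdot;a,\alpha,\beta,\gamma)$ depends on $\boldsymbol h$ only through $\|\boldsymbol h\|$, its restriction to a subspace $\mathbb{R}^{d-k}$ is again isotropic and has exactly the same radial part $g_d(\cdot;a,\alpha,\beta,\gamma):\mathbb{R}_+\to\mathbb{R}$; moreover, the restriction of a positive semidefinite kernel to a subspace is automatically positive semidefinite. Hence the whole content of the theorem is to show that this radial part coincides with a member of the family $\mathcal{G}_{d-k}$, i.e., that it can be written in the form \eqref{2F1} with $d$ replaced by $d-k$ and with admissible shape parameters.

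I would guess the parameter map $(\alpha,\beta,\gamma)\mapsto(\alpha',\beta',\gamma'):=(\alpha-\tfrac{k}{2},\,\beta-\tfrac{k}{2},\,\gamma-\tfrac{k}{2})$ and verify by direct substitution into \eqref{2F1} that $G_{d-k}(\boldsymbol h;a,\alpha',\beta',\gamma')=G_d(\boldsymbol h;a,\alpha,\beta,\gamma)$ for all $\boldsymbol h$. The verification is pure bookkeeping: the four arguments $\alpha-\tfrac{d}{2}$, $\beta-\tfrac{d}{2}$, $\gamma-\tfrac{d}{2}$ and $\beta-\alpha+\gamma-\tfrac{d}{2}$ that control the gamma prefactor and the exponent of $(1-\|\boldsymbol h\|^2/a^2)_+$ are all invariant under simultaneously lowering $d$ to $d-k$ and subtracting $\tfrac{k}{2}$ from each of $\alpha,\beta,\gamma$, while the parameters $\beta-\alpha$, $\gamma-\alpha$ and $\beta-\alpha+\gamma-\tfrac{d}{2}$ entering the ${}_2F_1$ are likewise unchanged. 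The two expressions are therefore term-by-term identical; the same check can be run on the integral form \eqref{hygeo9} as a cross-validation.

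It then remains to confirm that $(\alpha',\beta',\gamma')\in\mathcal{P}_{d-k}$, so that the resulting kernel is a bona fide element of $\mathcal{G}_{d-k}$ (and, by Theorem \ref{theorem1}, a valid covariance in $\mathbb{R}^{d-k}$). The condition $\alpha'>\tfrac{d-k}{2}$ is exactly $\alpha>\tfrac{d}{2}$, which holds by hypothesis. For the other two, since $k\geq0$ one has $2(\beta'-\alpha')(\gamma'-\alpha')=2(\beta-\alpha)(\gamma-\alpha)\geq\alpha\geq\alpha-\tfrac{k}{2}=\alpha'$ and $2(\beta'+\gamma')=2(\beta+\gamma)-2k\geq(6\alpha+1)-2k\geq(6\alpha+1)-3k=6\alpha'+1$; positivity of the triplet follows from $\alpha'>\tfrac{d-k}{2}\geq0$ together with the implied inequalities $\beta'>\alpha'$ and $\gamma'>\alpha'$.

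I do not expect a genuine obstacle: the theorem is really the statement that the hypergeometric family is stable under taking lower-dimensional sections, and once the parameter shift is guessed, everything reduces to the invariance noted in the second paragraph. The only point needing mild care is to ensure that the shift keeps us inside $\mathbb{R}_+^3$ --- chiefly that $\alpha'>0$ --- which is guaranteed because $k\leq d-1$ forces $d-k\geq1$ and hence $\alpha'>\tfrac{d-k}{2}>0$.
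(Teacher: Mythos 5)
Your proposal is correct and follows exactly the paper's own argument: the paper likewise observes that $g_{d-k}(\cdot;a,\alpha-\tfrac{k}{2},\beta-\tfrac{k}{2},\gamma-\tfrac{k}{2})$ coincides with $g_d(\cdot;a,\alpha,\beta,\gamma)$ and that $(\alpha-\tfrac{k}{2},\beta-\tfrac{k}{2},\gamma-\tfrac{k}{2})\in\mathcal{P}_{d-k}$ whenever $(\alpha,\beta,\gamma)\in\mathcal{P}_d$. You merely spell out the bookkeeping (invariance of the gamma arguments and of the ${}_2F_1$ parameters, and the three inequalities defining $\mathcal{P}_{d-k}$) that the paper leaves implicit, and all of those checks are accurate.
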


\begin{theorem} [Extension to higher-dimensional spaces]
\label{extension}
The extension of the $d$-dimensional Gauss hypergeometric covariance kernel \eqref{2F1} to a higher-dimensional space $\mathbb{R}^{d+k}$, $k \in \mathbb{N}$, belongs to the family of Gauss hypergeometric covariance kernels $\mathcal{G}_{d+k}$ provided that $(\alpha+\frac{k}{2},\beta+\frac{k}{2},\gamma+\frac{k}{2}) \in \mathcal{P}_{d+k}$. 
\end{theorem}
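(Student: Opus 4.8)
The plan is to reduce the statement to a single algebraic identity rather than to any genuine positive-definiteness argument. Recall that ``extending'' the isotropic covariance $G_d(\cdot;a,\alpha,\beta,\gamma)$ to $\mathbb{R}^{d+k}$ means taking the radial profile $g_d(\cdot;a,\alpha,\beta,\gamma)$ and regarding it as the radial part of an isotropic function on the larger space; the question is whether this profile is still a valid Gauss hypergeometric covariance there. I would show that it is, by establishing that the extension coincides \emph{exactly} with $G_{d+k}(\cdot;a,\alpha+\frac{k}{2},\beta+\frac{k}{2},\gamma+\frac{k}{2})$. The crucial observation is that the closed form \eqref{2F1} (equivalently the integral representation \eqref{hygeo9}) depends on the four scalar parameters and the dimension only through the combinations $\alpha-\frac{d}{2}$, $\beta-\frac{d}{2}$, $\gamma-\frac{d}{2}$, $\beta-\alpha$ and $\gamma-\alpha$, together with $a$ and $\|\boldsymbol{h}\|$.

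First I would substitute $(d,\alpha,\beta,\gamma)\mapsto(d+k,\alpha+\frac{k}{2},\beta+\frac{k}{2},\gamma+\frac{k}{2})$ into \eqref{2F1} and verify that each of these combinations is invariant: indeed $\alpha+\frac{k}{2}-\frac{d+k}{2}=\alpha-\frac{d}{2}$, and likewise for $\beta$ and $\gamma$, while the differences $\beta-\alpha$ and $\gamma-\alpha$ are manifestly unchanged. Consequently the exponent $\beta-\alpha+\gamma-\frac{d}{2}-1$, the three Gauss-hypergeometric parameters $\beta-\alpha$, $\gamma-\alpha$, $\beta-\alpha+\gamma-\frac{d}{2}$, and the gamma-function prefactor all remain identical. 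This yields $G_{d+k}(\boldsymbol{h};a,\alpha+\frac{k}{2},\beta+\frac{k}{2},\gamma+\frac{k}{2})=G_d(\boldsymbol{h};a,\alpha,\beta,\gamma)$ as functions of $\|\boldsymbol{h}\|$; the same cancellations can be read off even more transparently from \eqref{hygeo9}, where they make the integrand and the prefactor term-by-term equal.

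To finish, I would apply Theorem \ref{theorem1} in dimension $d+k$ to the shifted triplet. The hypothesis $(\alpha+\frac{k}{2},\beta+\frac{k}{2},\gamma+\frac{k}{2})\in\mathcal{P}_{d+k}$ is precisely the statement that these parameters satisfy the last three conditions of Theorem \ref{theorem1} in dimension $d+k$; together with $a>0$ this supplies all the hypotheses of that theorem. Hence $G_{d+k}(\cdot;a,\alpha+\frac{k}{2},\beta+\frac{k}{2},\gamma+\frac{k}{2})$ is a valid covariance kernel in $\mathbb{R}^{d+k}$ and belongs to $\mathcal{G}_{d+k}$. By the identity established above, the extension of $G_d(\cdot;a,\alpha,\beta,\gamma)$ coincides with it and therefore also belongs to $\mathcal{G}_{d+k}$.

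The computation is routine and I do not anticipate a deep obstacle; the only point requiring care is the bookkeeping. One must check that the prescribed shift is exactly the one keeping every dimension-dependent combination fixed, and confirm that membership in $\mathcal{P}_{d+k}$ for the shifted triplet unpacks into the intended constraints on the original parameters, namely $\alpha>\frac{d}{2}$, $2(\beta-\alpha)(\gamma-\alpha)\ge\alpha+\frac{k}{2}$ and $2(\beta+\gamma)\ge 6\alpha+k+1$. These are strictly stronger than the conditions ensuring validity in $\mathbb{R}^d$, which correctly reflects that remaining positive semidefinite on the larger space $\mathbb{R}^{d+k}$ is a more demanding requirement.
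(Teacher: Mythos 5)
Your proof is correct and follows essentially the same route as the paper: the paper's own argument (via Theorem \ref{restriction}) likewise rests on the observation that $g_{d+k}(\cdot;a,\alpha+\frac{k}{2},\beta+\frac{k}{2},\gamma+\frac{k}{2})$ coincides with $g_d(\cdot;a,\alpha,\beta,\gamma)$, combined with the hypothesis that the shifted triplet lies in $\mathcal{P}_{d+k}$ so that Theorem \ref{theorem1} applies in the larger space. Your explicit verification of the dimension-parameter invariance and the unpacking of the shifted conditions are both accurate and merely make the paper's terse proof more detailed.
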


\begin{remark}
For any set of finite parameters $(\alpha, \beta,\gamma) \in \mathcal{P}_{d}$, there exists a finite nonnegative integer $k$ such that $(\alpha+\frac{k}{2},\beta+\frac{k}{2},\gamma+\frac{k}{2}) \in \mathcal{P}_{d+k}$ and $(\alpha+\frac{k+1}{2},\beta+\frac{k+1}{2},\gamma+\frac{k+1}{2}) \not \in \mathcal{P}_{d+k+1}$: the extension of the Gauss hypergeometric covariance kernel with parameters $(\alpha, \beta,\gamma)$ in spaces of dimension greater than $d+k$ is no longer a valid covariance kernel. This agrees with Schoenberg's theorem \citep{Schoenberg}, according to which an isotropic function is a positive semidefinite kernel in Euclidean spaces of any dimension if, and only if, it is a nonnegative mixture of Gaussian covariance kernels, which the Gauss hypergeometric covariance (as any compactly supported kernel) is not.
\end{remark}

\begin{theorem}[Continuity and smoothness]
\label{continuity}
The function $(r,a,\alpha,\beta,\gamma) \mapsto g_d(r; a,\alpha,\beta,\gamma)$ from $\mathbb{R}_+ \times \mathbb{R}_+^* \times \mathcal{P}_d$ to $\mathbb{R}$ is
\begin{itemize}
    \item continuous with respect to $r$ on $[0,+\infty[$ and infinitely differentiable on $]0,a[$ and $]a,+\infty[$;
    \item continuous and infinitely differentiable with respect to $a$ on $]0,r[$ and $]r,+\infty[$;
    \item continuous and infinitely differentiable with respect to $\alpha$, $\beta$ and $\gamma$. 
\end{itemize}
\end{theorem}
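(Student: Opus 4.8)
\noindent\emph{Proof strategy.} The plan is to read off all three regularity statements from the closed form \eqref{2F1}, which factorises $g_d(r;a,\alpha,\beta,\gamma)$ as a product of a Gamma prefactor $K(\alpha,\beta,\gamma)=\frac{\Gamma(\beta-\frac{d}{2})\Gamma(\gamma-\frac{d}{2})}{\Gamma(\beta-\alpha+\gamma-\frac{d}{2})\Gamma(\alpha-\frac{d}{2})}$, a power factor $x^{\,e-1}$ with $x=(1-r^2/a^2)_+$ and $e=\beta-\alpha+\gamma-\frac{d}{2}$, and the Gauss function ${}_2F_1(\beta-\alpha,\gamma-\alpha;e;x)$. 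Before treating the variables one at a time I would record two arithmetic facts valid on $\mathcal{P}_d$: first, that $\alpha-\frac{d}{2}>0$, $\beta-\frac{d}{2}>0$, $\gamma-\frac{d}{2}>0$ and $e>0$, so that $K$ is real-analytic there; second, the two strict inequalities $e-1>0$ and $e-(\beta-\alpha)-(\gamma-\alpha)=\alpha-\frac{d}{2}>0$. The latter is immediate; for $e>1$ I would combine $\beta+\gamma\ge 3\alpha+\frac{1}{2}$ (fourth condition) with $\alpha>\frac{d}{2}$ to get $e\ge 2\alpha-\frac{d}{2}+\frac{1}{2}>1$, the case $d=1$ being the tight one. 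These two inequalities are exactly what drives the boundary behaviour at $r=a$ and at $r=0$.

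Smoothness in $r$ and in $a$ on the open intervals is then a routine composition argument. On $]a,+\infty[$ (respectively on $]0,r[$ when $a$ is the variable) the kernel is identically zero, hence $C^\infty$. On $]0,a[$ (respectively $]r,+\infty[$) one has $x\in\,]0,1[$; the map $r\mapsto x$ (resp.\ $a\mapsto x$) is $C^\infty$ with values in $]0,1[$, $x\mapsto x^{\,e-1}$ is $C^\infty$ on $]0,+\infty[$, and $x\mapsto{}_2F_1(\beta-\alpha,\gamma-\alpha;e;x)$ is real-analytic on $]-1,1[$ because its defining power series \eqref{pFq} has radius of convergence $1$. A product and composition of $C^\infty$ functions is $C^\infty$, which settles the first two bullet points away from the endpoints.

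The only genuine work is the continuity in $r$ at the two endpoints of $[0,a]$, and I expect this to be the main obstacle. As $r\to a^-$ we have $x\to 0^+$, ${}_2F_1(\cdot;0)=1$ and $x^{\,e-1}\to 0$ because $e-1>0$; hence $g_d\to 0$, matching the value for $r>a$. As $r\to 0^+$ we have $x\to 1^-$, and here the inequality $e-(\beta-\alpha)-(\gamma-\alpha)=\alpha-\frac{d}{2}>0$ guarantees, by the endpoint convergence criterion recorded after \eqref{pFq}, that the nonnegative-term series converges at $x=1$; Abel's theorem then gives $\lim_{x\to1^-}{}_2F_1=\,{}_2F_1(\beta-\alpha,\gamma-\alpha;e;1)$, which Gauss's summation formula evaluates to $\frac{\Gamma(e)\Gamma(\alpha-\frac{d}{2})}{\Gamma(\gamma-\frac{d}{2})\Gamma(\beta-\frac{d}{2})}$. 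Multiplying by $K$ and by $x^{\,e-1}\to1$ yields $g_d(0^+)=1=g_d(0)$, so the kernel is continuous on $[0,+\infty[$.

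Finally, for the smoothness in $(\alpha,\beta,\gamma)$ I would fix $r\ge 0$ and $a>0$ and split according to the position of $r$. If $r>a$ the kernel vanishes identically in the parameters, and if $r=0$ it equals $1$ by the chosen normalisation \eqref{hygeo8}; both are constant, hence trivially $C^\infty$. If $0<r<a$ then $x\in\,]0,1[$ is a fixed constant and $g_d$ is the product of (i) the prefactor $K$, real-analytic on $\mathcal{P}_d$ since all Gamma arguments are positive, (ii) $x^{\,e-1}=\exp\!\big((e-1)\log x\big)$, smooth because $e-1$ is affine in the parameters, and (iii) ${}_2F_1(\beta-\alpha,\gamma-\alpha;e;x)$, whose series coefficients are rational in $(\alpha,\beta,\gamma)$ with poles only where $e$ is a non-positive integer; since $e>0$ on $\mathcal{P}_d$ and $|x|<1$, the series converges locally uniformly together with all its parameter derivatives, so the sum is real-analytic in $(\alpha,\beta,\gamma)$. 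A product of such functions is $C^\infty$, completing the proof.
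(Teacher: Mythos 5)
Your proof is correct and follows essentially the same route as the paper's: both read all three regularity claims off the closed form \eqref{2F1}, using that ${}_2F_1(a_1,a_2;b_1;\cdot)$ with $b_1-a_1-a_2>0$ is continuous on $[0,1]$, smooth on $]0,1[$ and analytic in its parameters. You merely supply details the paper leaves implicit (the inequality $\beta-\alpha+\gamma-\frac{d}{2}>1$ on $\mathcal{P}_d$ and the Gauss summation at $r=0$); the only cosmetic omission is the trivial case $r=a$ in the parameter-smoothness bullet, where $g_d$ vanishes identically.
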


\begin{theorem}[Differentiability at $r=a$]
\label{differentiabilitya}
The function $r \mapsto g_d(r; a,\alpha,\beta,\gamma)$ from $\mathbb{R}_+$ to $\mathbb{R}$ is $k$ times differentiable at $r=a$ if, and only if, $\beta - \alpha + \gamma > k+\frac{d}{2}+1$.
\end{theorem}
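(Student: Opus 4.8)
The plan is to read off the behaviour near $r=a$ directly from the closed form \eqref{2F1}. Setting $\mu=\beta-\alpha+\gamma-\frac d2-1$ and $\tau=1-r^2/a^2$, the radial kernel is
\begin{equation*}
g_d(r;a,\alpha,\beta,\gamma)=C\,\tau_+^{\mu}\,{}_2F_1\!\left(\beta-\alpha,\gamma-\alpha;\mu+1;\tau_+\right),\qquad C>0 .
\end{equation*}
By Theorem \ref{continuity} the map $r\mapsto g_d$ is $C^\infty$ on $]0,a[$ and on $]a,+\infty[$, and it is identically zero for $r>a$, so the only place where differentiability of a given order can break down is $r=a$. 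First I would observe that $\tau(r)=1-r^2/a^2$ is real-analytic with $\tau(a)=0$ and $\tau'(a)=-2/a\neq0$, hence a local diffeomorphism near $r=a$ sending $\{r<a\}$ to $\{\tau>0\}$ and $\{r>a\}$ to $\{\tau<0\}$. Since composition with an analytic diffeomorphism (and with its analytic local inverse) preserves the exact order of differentiability at the corresponding points, $g_d$ is $k$ times differentiable at $r=a$ if and only if $\phi(\tau):=\tau_+^{\mu}\,{}_2F_1(\beta-\alpha,\gamma-\alpha;\mu+1;\tau_+)$ is $k$ times differentiable at $\tau=0$.

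Next I would strip off the analytic factor. For $(\alpha,\beta,\gamma)\in\mathcal P_d$ one checks that $\mu+1=\beta-\alpha+\gamma-\frac d2>0$ (in fact $\mu>0$, in agreement with the continuity at $r=a$ already granted by Theorem \ref{continuity}, which is the case $k=0$); in particular $\mu+1$ is not a non-positive integer, so $F(\tau):={}_2F_1(\beta-\alpha,\gamma-\alpha;\mu+1;\tau)$ is real-analytic on $]-1,1[$ with $F(0)=1\neq0$. Thus $\phi(\tau)=\tau_+^{\mu}F(\tau)$ near $\tau=0$. Because $F(0)\neq0$, the reciprocal $1/F$ is analytic there as well, so $\phi$ and $\tau_+^{\mu}=\phi/F$ are obtained from one another by multiplication by a $C^\infty$ function. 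By the Leibniz rule, multiplying a function by a $C^\infty$ factor cannot lower its order of differentiability at a point; applying this to both $\phi=\tau_+^{\mu}F$ and $\tau_+^{\mu}=\phi/F$ shows that $\phi$ and $\tau_+^{\mu}$ are $k$ times differentiable at $0$ for exactly the same values of $k$.

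It then remains to pin down the exact differentiability of the pure power $\tau\mapsto\tau_+^{\mu}$ at the origin, for which I would use the elementary fact that it is $k$ times differentiable at $0$ if and only if $\mu>k$: for $\tau>0$ its $k$-th derivative equals $\mu(\mu-1)\cdots(\mu-k+1)\,\tau^{\mu-k}$, whose one-sided limit at $0$ is finite and equal to the left-hand value $0$ precisely when $\mu-k>0$, whereas for $\mu\le k$ the $(k{-}1)$-th derivative either fails to be differentiable at $0$ or has a corner there. Chaining the three reductions yields that $g_d$ is $k$ times differentiable at $r=a$ if and only if $\mu>k$, that is $\beta-\alpha+\gamma>k+\frac d2+1$, as claimed. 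The delicate point is the \emph{only if} (sharpness) direction: one must rule out that the analytic prefactor ${}_2F_1$ cancels the leading singularity $\tau_+^{\mu}$. The $1/F$ argument above settles this cleanly and also disposes of the awkward borderline where $\mu$ is a positive integer, a case in which arguing through a single derivative of the product would otherwise require careful bookkeeping, the binomial coefficient $\mu(\mu-1)\cdots(\mu-k+1)$ then vanishing.
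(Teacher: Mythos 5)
Your proof is correct and follows essentially the same route as the paper's: both read the order of vanishing $\left(1-\frac{r}{a}\right)^{\beta-\alpha+\gamma-\frac{d}{2}-1}$ at $r=a^-$ off the closed form \eqref{2F1} and compare it with $k$, using that the kernel is identically zero for $r>a$. The paper states this in two lines, whereas you additionally supply the details that make the \emph{only if} direction airtight --- the change of variables $\tau=1-r^2/a^2$, the non-vanishing of the ${}_2F_1$ factor at $\tau=0$, and the integer-$\mu$ borderline --- which the paper leaves implicit.
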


\begin{theorem}[Differentiability at $r=0$]
\label{differentiability0}
The function $r \mapsto g_d(r; a,\alpha,\beta,\gamma)$ from $\mathbb{R}_+$ to $\mathbb{R}$ is $k$ times differentiable at $r=0$ (therefore, it can be associated with a $\lfloor k/2 \rfloor$ times mean-square differentiable random field, with $\lfloor \cdot \rfloor$ denoting the floor function) if, and only if, $\alpha>\frac{k+d}{2}$.

\end{theorem}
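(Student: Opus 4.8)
The plan is to extract the local behavior of $g_d$ at the origin directly from the closed form \eqref{2F1}. I would set $w = r^2/a^2$, so that $r \to 0^+$ corresponds to $w \to 0^+$ while the argument $1-w$ of the Gauss hypergeometric function in \eqref{2F1} tends to $1$. Writing $b = \beta-\alpha$, $c = \gamma-\alpha$ and $e = \beta-\alpha+\gamma-\frac d2$ for the three parameters of this ${}_2F_1$, the decisive observation is the identity $e-b-c = \alpha - \frac d2$, which is exactly the quantity controlling the claimed threshold.

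Next I would apply the standard connection formula expressing ${}_2F_1(b,c;e;1-w)$ near $w=0$ as a sum of two power series in $w$, the second of which carries the factor $w^{\,e-b-c} = w^{\,\alpha-d/2}$. After multiplying by the prefactor $(1-w)^{e-1}$, which is analytic at $w=0$, this yields a decomposition
\[
g_d(r) = R(r) + S(r),
\]
where $R$ is an analytic, even function of $r$ (hence $C^\infty$ at $r=0$) and the singular part reads $S(r) = C\, r^{\,2\alpha-d}\, \psi(r^2)$ with $\psi$ analytic, $\psi(0)=1$, and $C$ a ratio of Gamma functions which, thanks to $\alpha>\frac d2$ and $\alpha<\beta,\gamma$, is finite and nonzero. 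When $\alpha-\frac d2$ is a non-negative integer the two series coalesce and a logarithm appears, so that instead $S(r) = C\, r^{\,2\alpha-d}\ln(r)\,\psi(r^2)$ plus an extra analytic even term.

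Since $R$ is smooth at the origin, the differentiability order of $g_d$ at $r=0$ equals that of $S$. I would then analyze $r\mapsto r^{\,2\alpha-d}$ (respectively $r^{\,2\alpha-d}\ln r$), showing that it is exactly $k$ times differentiable at $0$ precisely when $2\alpha-d>k$: for non-integer $2\alpha-d$ the power $r^{\,2\alpha-d}$ loses differentiability at order $\lfloor 2\alpha-d\rfloor$, while in the integer case the logarithmic factor makes the $(2\alpha-d)$-th derivative blow up, so in both situations the inequality is strict. Combining with $\psi(0)\neq0$, which rules out any accidental cancellation of the leading singular term, gives that $g_d$ is $k$ times differentiable at $r=0$ if and only if $2\alpha-d>k$, i.e.\ $\alpha>\frac{k+d}{2}$. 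For the random-field statement I would invoke the classical correspondence between covariance smoothness and mean-square regularity: a second-order stationary field is $m$ times mean-square differentiable if and only if its covariance is $2m$ times differentiable at the origin, whence $k$-fold differentiability of $g_d$ yields $\lfloor k/2\rfloor$ mean-square derivatives.

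The main obstacle is the boundary case $\alpha-\frac d2\in\mathbb{N}$, where the clean power-law connection formula degenerates: one must verify that the logarithmic term is genuinely present with nonzero coefficient, so that the strict inequality $2\alpha-d>k$ is the correct threshold rather than $2\alpha-d\ge k$, and must confirm that the leading coefficient of the analytic factor $\psi$ does not vanish. As a consistency check I would also confirm the conclusion against the spectral viewpoint: the algebraic tail of the spectral density \eqref{spectraldensity2} behaves like $\|\boldsymbol u\|^{-2\alpha}$ as $\|\boldsymbol u\|\to\infty$, so its $2m$-th moment $\int_{\mathbb{R}^d}\|\boldsymbol u\|^{2m}\widetilde G_d(\boldsymbol u)\,\mathrm d\boldsymbol u$ is finite exactly when $\alpha>m+\frac d2$, in agreement with the even-order case $k=2m$.
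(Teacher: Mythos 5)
Your argument is essentially the paper's own proof: the connection formula you invoke for ${}_2F_1(b,c;e;1-w)$ near $w=0$ is exactly Matheron's formulae E.2.3/E.2.4 used in the paper, yielding the same decomposition into an analytic even part plus a singular term proportional to $r^{2\alpha-d}$ (times a logarithm when $\alpha-\frac{d}{2}\in\mathbb{N}$), from which the threshold $\alpha>\frac{k+d}{2}$ is read off identically. Your spectral-moment consistency check is a sensible addition but not needed.
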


\begin{theorem}[Monotonicity]
\label{monotonicity}
The function $(r,a,\alpha,\beta,\gamma) \mapsto g_d(r; a,\alpha,\beta,\gamma)$ from $\mathbb{R}_+ \times \mathbb{R}_+^* \times \mathcal{P}_d$ to $\mathbb{R}$ is 
\begin{itemize}
    \item decreasing in $r$ on $[0,a]$ and identically zero on $[a,+\infty[$;
    \item increasing in $a$ on $[r,+\infty[$ and identically zero on $]0,r]$;
    \item decreasing in $\beta$ if $0<r<a$, constant in $\beta$ if $r=0$ or if $r \geq a$;
    \item decreasing in $\gamma$ if $0<r<a$, constant in $\gamma$ if $r=0$ or if $r \geq a$.
\end{itemize}
\end{theorem}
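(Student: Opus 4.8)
The plan is to base every monotonicity statement on a single integral representation of $g_d$ obtained by substituting Euler's integral for ${}_2F_1$ into the closed form \eqref{2F1}. Writing $s=\|\boldsymbol h\|^2/a^2=(r/a)^2\in[0,1]$ for $0\le r\le a$ and taking $\gamma-\alpha$ (rather than $\beta-\alpha$) as the integration parameter in Euler's formula, which is licit since $\gamma-\alpha>0$ and $\beta-\tfrac d2>0$ on $\mathcal P_d$, I would obtain, for $0<r<a$,
\[
g_d(r;a,\alpha,\beta,\gamma)=\frac{\Gamma(\gamma-\tfrac d2)}{\Gamma(\alpha-\tfrac d2)\,\Gamma(\gamma-\alpha)}\,(1-s)^{\beta-\alpha+\gamma-\frac d2-1}\int_0^1\tau^{\gamma-\alpha-1}(1-\tau)^{\beta-\frac d2-1}\bigl(1-(1-s)\tau\bigr)^{-(\beta-\alpha)}\,\mathrm d\tau .
\]
The key feature, which I would verify by cancelling the $\Gamma$-factors, is that the $\beta$-dependent gamma prefactor of \eqref{2F1} is entirely absorbed into the Euler integral, leaving a multiplicative constant \emph{free of $\beta$}. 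I would also record, from the defining inequalities of $\mathcal P_d$, that $\beta-\alpha+\gamma-\tfrac d2-1>0$, $\gamma-\alpha>0$, $\beta-\alpha>0$ and $\alpha>\tfrac d2$, and note $0\le 1-s<1$ on the relevant range; a direct check at $s=0$ recovers $g_d(0)=1$.

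For monotonicity in $r$ (equivalently in $s$), with $(a,\alpha,\beta,\gamma)$ fixed I would observe that as $s$ increases the factor $(1-s)^{\beta-\alpha+\gamma-\frac d2-1}$ decreases (positive exponent, base in $[0,1)$) and, for each $\tau\in(0,1)$, $1-(1-s)\tau$ increases so that $\bigl(1-(1-s)\tau\bigr)^{-(\beta-\alpha)}$ decreases (negative exponent); hence the integrand decreases pointwise and so does the integral. Being a product of a positive constant with two positive, decreasing factors, $g_d$ is decreasing in $s$, therefore in $r$ on $[0,a]$, and it is identically $0$ on $[a,+\infty[$ by \eqref{hygeo9}. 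Monotonicity in $a$ is then immediate: $g_d$ depends on $a$ only through $s=(r/a)^2$, which for fixed $r>0$ decreases in $a$, so $g_d$ increases in $a$ on $[r,+\infty[$; for $0<a\le r$ one has $r\ge a$ and $g_d\equiv0$.

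For monotonicity in $\beta$ the representation pays off. With $(r,a,\alpha,\gamma)$ fixed and $0<s<1$, the prefactor is independent of $\beta$, the factor $(1-s)^{\beta-\alpha+\gamma-\frac d2-1}$ is decreasing in $\beta$ (base in $(0,1)$, exponent increasing in $\beta$), and the $\beta$-dependent part of the integrand is $\bigl((1-\tau)/(1-(1-s)\tau)\bigr)^{\beta}$ times a $\beta$-free factor. Since $1-(1-s)\tau-(1-\tau)=s\tau\ge0$, the base lies in $(0,1)$ for $\tau\in(0,1)$, so this factor is decreasing in $\beta$; thus the integral, and hence $g_d$, is decreasing in $\beta$ (strictly, as $s\tau>0$ on a set of positive measure). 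For $r=0$ one has $g_d=1$ and for $r\ge a$ one has $g_d=0$, both constant in $\beta$. Monotonicity in $\gamma$ then needs no new computation: the prefactor $\Gamma(\beta-\tfrac d2)\Gamma(\gamma-\tfrac d2)/[\Gamma(\beta-\alpha+\gamma-\tfrac d2)\Gamma(\alpha-\tfrac d2)]$ and the parameter list of ${}_2F_1$ in \eqref{2F1} are symmetric under $\beta\leftrightarrow\gamma$ (as is $\mathcal P_d$), so $g_d(\cdot;a,\alpha,\beta,\gamma)=g_d(\cdot;a,\alpha,\gamma,\beta)$ and the $\beta$-argument transfers verbatim.

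The main obstacle, and the single idea on which the whole proof hinges, is the choice of representation. Differentiating \eqref{2F1} or using the raw integral \eqref{hygeo9} directly entangles the $\beta$-monotonicity with a digamma difference $\psi(\beta-\tfrac d2)-\psi(\beta-\alpha)$ coming from the $\Gamma$-prefactor, which has the \emph{wrong} sign and fights against the decreasing integral, so the net sign is not transparent. Selecting Euler's integral with $\gamma-\alpha$ as the integration variable is precisely what cancels those $\beta$-dependent gammas and exposes the manifestly monotone structure; everything else is elementary sign bookkeeping together with the $\beta\leftrightarrow\gamma$ symmetry.
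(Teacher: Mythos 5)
Your proof is correct, and its backbone is the same as the paper's: your Euler-integral representation is exactly the paper's formula \eqref{gd} in disguise, since $(1-t)^{\alpha-\frac d2-1}\bigl(1+\tfrac{t}{1-t}s\bigr)^{\alpha-\beta}=(1-t)^{\beta-\frac d2-1}\bigl(1-(1-s)t\bigr)^{-(\beta-\alpha)}$, so the arguments for monotonicity in $r$ and in $a$ coincide with the paper's (product of a positive constant with pointwise nonnegative decreasing factors, plus the observation that $g_d$ depends on $a$ only through $r/a$). The one place where you genuinely diverge is the parameter monotonicity: the paper differentiates the representation \eqref{hygeo9} (whose gamma prefactor is free of $\gamma$) under the integral sign with respect to $\gamma$, obtaining a factor $\ln\bigl(1-\tfrac{r^2}{ta^2}\bigr)<0$, and then transfers to $\beta$ by the $\beta\leftrightarrow\gamma$ symmetry of \eqref{2F1}; you instead work with the representation whose prefactor is free of $\beta$, isolate the $\beta$-dependence as $(1-s)^{\beta}\bigl(\tfrac{1-\tau}{1-(1-s)\tau}\bigr)^{\beta}$ with both bases in $(0,1)$, and read off the decrease without any differentiation, then transfer to $\gamma$ by the same symmetry. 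The two are mirror images of one another; yours avoids justifying differentiation under the integral sign, while the paper's derivative computation has the small bonus of exhibiting the exact rate of decrease. Your diagnosis of why a naive differentiation of \eqref{2F1} in $\beta$ fails (the digamma terms from the prefactor have the wrong sign) correctly identifies the obstacle both proofs are engineered to sidestep.
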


\begin{theorem}[Mont\'ee]
\label{montee}
If $G_d(\cdot,a,\alpha,\beta,\gamma) \in \mathcal{G}_d$ and $\mathfrak{M}_k$ stands for the transitive upgrading (mont\'ee) of order $k$, $k \in \{0,\hdots,d-1\}$ (Appendix \ref{appendixA}), then $\mathfrak{M}_k (G_d(\cdot,a,\alpha,\beta,\gamma)) \in \mathcal{G}_{d-k}$ and its radial part is proportional to $g_{d}(\cdot,a,\alpha+\frac{k}{2},\beta+\frac{k}{2},\gamma+\frac{k}{2})$. In other words, when looking at the radial part of the covariance kernel, the mont\'ee of order $k$ amounts to upgrading the $\alpha$, $\beta$ and $\gamma$ parameters by $\frac{k}{2}$.

\end{theorem}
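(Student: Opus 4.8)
The plan is to work at the level of spectral densities, exploiting that the radial spectral density \eqref{spectraldensity2} depends on the dimension $d$ only through the normalization constant $\zeta_d(a,\alpha,\beta,\gamma)$, whereas its dependence on the frequency is carried by the factor ${}_1F_2(\alpha;\beta,\gamma;-(\pi a\|\boldsymbol u\|)^2)$, which is the \emph{same} function in every dimension. First I would record the action of the montée on radial spectral densities. Using the Bessel identity $\tfrac{d}{dx}\big(x^{-\nu}J_\nu(x)\big)=-x^{-\nu}J_{\nu+1}(x)$ inside the Hankel representation \eqref{hankel}, one checks that the descente of order $2$ (which raises the dimension by $2$) acts on the radial covariance as $-\tfrac1{2\pi r}\tfrac{d}{dr}$ and leaves the radial spectral density unchanged; consequently its inverse, the montée of order $2$, namely $r\mapsto 2\pi\int_r^{+\infty}s\,g_d(s;\cdot)\,\mathrm{d}s$, also preserves the radial spectral density, and the odd-order (Abel-type) montée does so as well. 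Hence $\mathfrak M_k$ maps an isotropic covariance in $\mathbb R^d$ with radial spectral density $\chi(\rho)$ to an isotropic covariance in $\mathbb R^{d-k}$ whose radial spectral density is proportional to the same $\chi(\rho)$. Because $G_d(\cdot;a,\alpha,\beta,\gamma)$ vanishes outside the ball of radius $a$, all the integrals defining $\mathfrak M_k$ run over a bounded interval and converge unconditionally, so no extra integrability hypothesis is needed.

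Applying this invariance to our kernel, whose radial spectral density is $\zeta_d(a,\alpha,\beta,\gamma)\,{}_1F_2(\alpha;\beta,\gamma;-(\pi a\rho)^2)$ by \eqref{spectraldensity2}, I conclude that $\mathfrak M_k(G_d(\cdot;a,\alpha,\beta,\gamma))$ is an isotropic covariance in $\mathbb R^{d-k}$ whose radial spectral density is proportional to ${}_1F_2(\alpha;\beta,\gamma;-(\pi a\rho)^2)$, i.e.\ proportional to the spectral density \eqref{spectraldensity2} of $G_{d-k}(\cdot;a,\alpha,\beta,\gamma)$. By injectivity of the Fourier (Hankel) transform on continuous, compactly supported, integrable functions, $\mathfrak M_k(G_d(\cdot;a,\alpha,\beta,\gamma))$ is a positive multiple of $G_{d-k}(\cdot;a,\alpha,\beta,\gamma)$.

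It then remains to recast $g_{d-k}(\cdot;a,\alpha,\beta,\gamma)$ with the advertised parameters and to confirm validity. Inspecting \eqref{2F1}, the radial part $g_d(r;a,\alpha,\beta,\gamma)$ depends on $(\alpha,\beta,\gamma,d)$ only through the combinations $\alpha-\tfrac d2$, $\beta-\tfrac d2$, $\gamma-\tfrac d2$ (besides $a$ and $r$); replacing $(\alpha,\beta,\gamma,d-k)$ by $(\alpha+\tfrac k2,\beta+\tfrac k2,\gamma+\tfrac k2,d)$ leaves these three combinations unchanged, so $g_{d-k}(\cdot;a,\alpha,\beta,\gamma)=g_d(\cdot;a,\alpha+\tfrac k2,\beta+\tfrac k2,\gamma+\tfrac k2)$, which is exactly the asserted proportionality of the radial part. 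Finally, since $d>d-k$ and $\mathcal P_d\subsetneq\mathcal P_{d-k}$, the triplet $(\alpha,\beta,\gamma)\in\mathcal P_d$ also lies in $\mathcal P_{d-k}$ (in particular $\alpha>\tfrac d2>\tfrac{d-k}2$), whence $G_{d-k}(\cdot;a,\alpha,\beta,\gamma)\in\mathcal G_{d-k}$ and therefore $\mathfrak M_k(G_d(\cdot;a,\alpha,\beta,\gamma))\in\mathcal G_{d-k}$.

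The main obstacle is the first step: pinning down precisely, from the definition of the montée in Appendix \ref{appendixA}, that $\mathfrak M_k$ acts as the identity up to a positive constant on radial spectral densities, uniformly in the parity of $k$. Once that invariance is secured, the remainder is the bookkeeping of constants together with the elementary parameter-shift identity for \eqref{2F1}.
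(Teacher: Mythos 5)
Your proposal is correct and follows essentially the same route as the paper's proof: both rest on the fact that the mont\'ee preserves the radial spectral density, so that $\mathfrak{M}_k(G_d(\cdot;a,\alpha,\beta,\gamma))$ has spectral density $\zeta_d(a,\alpha,\beta,\gamma)\,{}_1F_2(\alpha;\beta,\gamma;-(\pi a\rho)^2)$ and is therefore a positive multiple of $G_{d-k}(\cdot;a,\alpha,\beta,\gamma)$, after which the parameter-shift identity $g_{d-k}(\cdot;a,\alpha,\beta,\gamma)=g_d(\cdot;a,\alpha+\tfrac k2,\beta+\tfrac k2,\gamma+\tfrac k2)$ and the inclusion $\mathcal{P}_d\subset\mathcal{P}_{d-k}$ finish the argument exactly as in the paper. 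The only superfluous part is your first paragraph: the invariance of the radial spectral density under $\mathfrak{M}_k$ is not an ``obstacle'' to be re-derived from Bessel identities, since it is literally the definition of the mont\'ee given in Appendix \ref{appendixA}.
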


\begin{theorem} [Descente]
\label{descente}
If $G_d(\cdot,a,\alpha,\beta,\gamma) \in \mathcal{G}_d$ and $k \in \mathbb{N}$, then $\mathfrak{M}_{-k }(G_d(\cdot,a,\alpha,\beta,\gamma)) \in \mathcal{G}_{d+k}$ and its radial part is proportional to $g_{d}(\cdot,a,\alpha-\frac{k}{2},\beta-\frac{k}{2},\gamma-\frac{k}{2})$, provided that $(\alpha-\frac{k}{2},\beta-\frac{k}{2},\gamma-\frac{k}{2}) \in \mathcal{P}_{d+k}$.
\end{theorem}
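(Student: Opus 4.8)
The plan is to obtain the descente from the mont\'ee (Theorem \ref{montee}), using that $\mathfrak{M}_{-k}$ is by definition (Appendix \ref{appendixA}) the inverse of $\mathfrak{M}_k$, together with a purely algebraic re-indexing identity for the radial function. That identity is the engine of the whole argument: substituting $d\mapsto d+k$ and $(\alpha,\beta,\gamma)\mapsto(\alpha-\tfrac{k}{2},\beta-\tfrac{k}{2},\gamma-\tfrac{k}{2})$ in the integral representation \eqref{hygeo9} leaves the gamma-function prefactor and the three exponents $\alpha-\gamma$, $\beta-\alpha-1$, $\gamma-\tfrac{d}{2}-1$ simultaneously invariant, whence
\[
g_{d+k}(r;a,\alpha,\beta,\gamma)=g_{d}\!\left(r;a,\alpha-\tfrac{k}{2},\beta-\tfrac{k}{2},\gamma-\tfrac{k}{2}\right)
\]
for all $r\ge 0$ and all admissible parameters. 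This shows at once that the two descriptions of the target radial part coincide, and it reduces the theorem to identifying the descente with an upward shift of the dimension index.

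First I would run the inverse-operator argument. Applying Theorem \ref{montee} in $\mathbb{R}^{d+k}$ to $G_{d+k}(\cdot;a,\alpha,\beta,\gamma)$ gives $\mathfrak{M}_k\big(G_{d+k}(\cdot;a,\alpha,\beta,\gamma)\big)\in\mathcal{G}_{d}$ with radial part proportional to $g_{d+k}(\cdot;a,\alpha+\tfrac{k}{2},\beta+\tfrac{k}{2},\gamma+\tfrac{k}{2})$, which by the displayed identity is $g_{d}(\cdot;a,\alpha,\beta,\gamma)$; hence $\mathfrak{M}_k\big(G_{d+k}(\cdot;a,\alpha,\beta,\gamma)\big)\propto G_{d}(\cdot;a,\alpha,\beta,\gamma)$. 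Applying $\mathfrak{M}_{-k}$ to both sides and using $\mathfrak{M}_{-k}\circ\mathfrak{M}_k=\mathrm{id}$ then yields $\mathfrak{M}_{-k}\big(G_{d}(\cdot;a,\alpha,\beta,\gamma)\big)\propto G_{d+k}(\cdot;a,\alpha,\beta,\gamma)$, whose radial part is $g_{d+k}(\cdot;a,\alpha,\beta,\gamma)=g_{d}(\cdot;a,\alpha-\tfrac{k}{2},\beta-\tfrac{k}{2},\gamma-\tfrac{k}{2})$, exactly the claimed form.

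As an independent check, and to make the shift transparent without inversion, I would compute the descente directly for an even step. For $k=2$ it is the dimension-walk operator $g\mapsto-\tfrac{1}{2\pi r}\,\tfrac{\text{d}g}{\text{d}r}$; differentiating \eqref{hygeo9} under the integral sign---the boundary term at $t=(r/a)^2$ vanishing as soon as $\gamma-\tfrac{d}{2}-1>0$---lowers the exponent on $t-(r/a)^2$ by one unit, leaves the $t$- and $(1-t)$-exponents untouched, and reproduces $g_{d+2}(\cdot;a,\alpha,\beta,\gamma)$ up to a positive constant (positive because, for parameters in $\mathcal{P}_{d+k}$, every gamma factor and the coefficient $\gamma-\tfrac{d}{2}-1$ are positive). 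Iterating covers all even $k$, and the general case follows from the fractional Weyl derivative of order $\tfrac{k}{2}$ in the variable $r^2$ that defines $\mathfrak{M}_{-k}$ in Appendix \ref{appendixA}.

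It remains to certify that the output is a genuine element of $\mathcal{G}_{d+k}$ and not merely a formal expression; this is where the proviso is used. It is exactly the condition under which Theorem \ref{theorem1}, read in dimension $d+k$, places the parameters of the target kernel in the admissible region $\mathcal{P}_{d+k}$, so that its spectral density \eqref{spectraldensity2} is nonnegative and integrable and the kernel is positive definite. I expect the main obstacle to be this admissibility step together with making the inverse rigorous: one must know that $\mathfrak{M}_{-k}$ is well defined on $\mathcal{G}_d$---equivalently, that the mont\'ee is injective, which follows from injectivity of the Hankel transform---and that the proportionality constants generated along the way are finite and strictly positive, so that renormalization to unit value at the origin is legitimate.
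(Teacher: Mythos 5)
Your proof is correct and follows essentially the same route as the paper's: the paper likewise reduces the descente to the mont\'ee computation --- the order-$(d+k)$ Hankel transform of the unchanged radial spectral density --- combined with the same parameter-shift identity $g_{d+k}(\cdot;a,\alpha,\beta,\gamma)=g_d\left(\cdot;a,\alpha-\tfrac{k}{2},\beta-\tfrac{k}{2},\gamma-\tfrac{k}{2}\right)$ and the observation that the proviso places the downgraded kernel in $\mathcal{P}_{d+k}$ by Theorem \ref{theorem1}. Your supplementary $k=2$ dimension-walk check and the remark on injectivity of the Hankel transform go beyond what the paper records, but they do not change the argument.
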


\begin{remark}
Theorems \ref{differentiabilitya}, \ref{differentiability0}, \ref{montee} and \ref{descente} show that a mont\'ee (descente) of order $2k$ increases (decreases) the differentiability order by $2k$ near the origin, but only by $k$ near the range. 
\end{remark}

\begin{remark}
Compare the mont\'ee, descente, restriction and extension operations in Theorems \ref{restriction}, \ref{extension}, \ref{montee} and \ref{descente}. Both the extension and mont\'ee of order $k$ upgrade the parameters $\alpha, \beta$ and $\gamma$ by $\frac{k}{2}$, but the latter reduces the space dimension by $k$ whereas the former increases the dimension. Conversely, the restriction and descente of order $k$ downgrade the parameters $\alpha, \beta$ and $\gamma$ by $\frac{k}{2}$, but the latter increases the dimension by $k$ whereas the former reduces the dimension.
\end{remark}

\subsection{Examples}

\subsubsection{Euclid's hat (spherical) covariance kernel}

For $\alpha>0$, $\beta=\alpha+\frac{1}{2}$ and $\gamma=2\alpha$, the generalized hypergeometric function ${}_1 F_2$ can be expressed in terms of a squared Bessel function \citep{erdelyi1953higher}:
\begin{equation}
\label{J2}    
  {}_1 F_2\left( \alpha;\alpha+\frac{1}{2},2\alpha; -{(\pi a \|{\boldsymbol u}\|)^2} \right)  =  \Gamma^2\left(\alpha+\frac{1}{2}\right)  \left( \frac{\pi a \|{\boldsymbol u}\|}{2} \right)^{1-2\alpha} J^2_{\alpha-\frac{1}{2}}(\pi a \|{\boldsymbol u}\|). 
  \end{equation}
Equations \eqref{spectraldensity2} and \eqref{J2}, together with the Legendre duplication formula for the gamma function \citep[formula 5.5.5]{olver2010nist} yield the following result, valid for $\boldsymbol{u} \in \mathbb{R}^d$ and $\kappa \in \mathbb{N}$:
\begin{equation*}
\label{sph1}
\begin{split}
    \widetilde{G}_d &\left({\boldsymbol u}; a,\frac{d+1}{2}+\kappa,  \frac{d}{2}+1+\kappa,d+1+2\kappa \right) \\
    &=   \frac{\Gamma(\kappa+1) \Gamma(\frac{d}{2}+1+2\kappa) \Gamma^2(\frac{d}{2}+1)}{\pi^\frac{d-1}{2} \Gamma(\kappa+\frac{1}{2}) \Gamma^2(\frac{d}{2}+1+\kappa) 2^{2\kappa}  \|{\boldsymbol u}\|^{d}}  J_{\frac{d}{2}+\kappa}^2(\pi a \|{\boldsymbol u}\|).
    \end{split}
\end{equation*}
One recognizes the spectral density of the mont\'ee of order $2\kappa$ of the spherical covariance in $\mathbb{R}^d$ \citep{Arroyo2020}. The case $\kappa=0$ corresponds to the $d$-dimensional spherical covariance (triangular or tent covariance in $\mathbb{R}$, circular covariance in $\mathbb{R}^2$, usual spherical covariance in $\mathbb{R}^3$, pentaspherical in $\mathbb{R}^5$)
\citep[formula II.5.2]{matheron1965variables}, also known as Euclid's hat \citep{Gneiting1999}, while the cases $\kappa=1$ and $\kappa=2$ correspond to the $d$-dimensional cubic and penta covariances, respectively \citep{chiles2009geostatistics}. 
Interestingly, these spherical and upgraded spherical kernels can be extended to parameters that are not integer or half-integer by taking $\alpha>\frac{d}{2}, \beta=\alpha+\frac{1}{2}$ and $\gamma=2\alpha$ (i.e., $\kappa \not \in \mathbb{N}$). Such extended kernels correspond to the so-called fractional mont\'ee (if $\alpha>\frac{d+1}{2}$) or fractional descente (if $\frac{d}{2}<\alpha<\frac{d+1}{2}$) of the $d$-dimensional spherical covariance kernel \citep{matheron1965variables, Gneiting2002}.

\subsubsection{Generalized Wendland and Askey covariance kernels}
\label{wendland}

The generalized Wendland covariance in $\mathbb{R}^d$ with range $a>0$ and smoothness parameter $\kappa>0$ is defined as:
\begin{equation*}
\label{wend1}
 {\boldsymbol h} \mapsto \frac{\Gamma(\ell+2\kappa+1)}{ \Gamma(\ell+1) \Gamma(2\kappa)}  \int_0^1 t (1-t)^{\ell} \left( t^2 - \frac{\|{\boldsymbol h}\|^2}{a^2} \right)_+^{\kappa - 1} \text{d}t, 
\end{equation*}
with $\ell \geq \frac{d+1}{2}+\kappa$. \cite{Bevilacqua2020}, \cite{Chernih}, \cite{Hubbert} and \cite{zastavnyi2006some} showed that this covariance and its spectral density can be written under the forms \eqref{2F1} and \eqref{spectraldensity2}, respectively, with $\alpha = \frac{d+1}{2}+\kappa$, $\beta = \frac{d+\ell+1}{2}+\kappa$ and $\gamma = \frac{d+\ell}{2}+1+\kappa$. The cases when $\ell=\lfloor \frac{d}{2}+\kappa \rfloor+1$ and $\kappa$ is an integer or a half-integer yield the original \citep{Wendland} and missing \citep{Schaback} Wendland functions, respectively. The radial parts of the former are truncated polynomials in $[0,a]$, while that of the latter involve polynomials, logarithms and square root components \citep{Chernih}. 

The above parameterization with $\kappa=0$, i.e., $\alpha = \frac{d+1}{2}$, $\beta = \frac{d+\ell+1}{2}$ and $\gamma = \frac{d+\ell}{2}+1$, yields the well-known Askey covariance \citep{Askey}, the expression of which can be recovered by using Equation \eqref{2F1} along with formula 15.4.17 of \cite{olver2010nist}:
\begin{equation*}
\label{ask}
G_d \left( {\boldsymbol h}; a,\frac{d+1}{2},\frac{d+\ell+1}{2},\frac{d+\ell}{2}+1 \right) = \left( 1-\frac{\|\mathbf{h}\|}{a} \right)_{+}^{\ell}, \quad \boldsymbol h \in \mathbb{R}^d, \, \ell \geq \frac{d+1}{2}.
\end{equation*}
In spaces of even dimension, the lower bound $\frac{d+1}{2}$ for $\ell$ is less than the one $\lfloor \frac{d}{2}\rfloor +1$ found by \cite{Askey} and agrees with the findings of \cite{Gasper}.

\subsubsection{Truncated power expansions and truncated polynomial covariance kernels}

The Gauss hypergeometric covariance reduces to a finite power expansion by choosing $\alpha-\frac{d}{2} \not \in \mathbb{N}$, $\beta-\frac{d}{2} = N \in \mathbb{N}$ and $\gamma-\alpha = M \in \mathbb{N}$. Using formula \eqref{2F1c1} in Appendix \ref{appendixA} and the duplication formula for the gamma function, one finds:
\begin{equation*}
\label{powerseries1}
\begin{split}
g_d&\left(r; a,\alpha,\frac{d}{2}+N,\alpha+M\right) \\
&= \frac{\Gamma(\frac{d}{2}-\alpha+1) \Gamma(N)}{\Gamma(\frac{d}{2}-\alpha-M+1)} \sum_{n=0}^{N-1} \frac{(-1)^n \Gamma(\frac{d}{2}-\alpha-M+1+n)}{ \Gamma(\frac{d}{2}-\alpha+1+n) \Gamma(N-n) \, n!} \left(\frac{r}{a}\right)^{2n} \\
&+\frac{\Gamma(\frac{d}{2}-\alpha+1) \Gamma(N)}{\Gamma(\frac{d}{2}-\alpha-M+1)} \sum_{n=0}^{M-1} \frac{(-1)^n \Gamma(\alpha-\frac{d}{2}-N+1+n) }
{\Gamma(\alpha-\frac{d}{2}+1+n) \Gamma(M-n) \, n!} \left(\frac{r}{a}\right)^{2n+2\alpha-d}, \quad 0 \leq r < a.
\end{split}
\end{equation*}

A similar expansion is found by choosing $\alpha-\frac{d}{2} \not \in \mathbb{N}$, $\gamma-\frac{d}{2} = N \in \mathbb{N}$ and $\beta-\alpha = M \in \mathbb{N}$.

In both cases, if $\alpha-\frac{d}{2}$ is a half-integer, the radial part of the covariance is a polynomial function, truncated at zero for $r>a$. The Askey and original Wendland kernels and, when the space dimension $d$ is an odd integer, the spherical kernels are particular cases of these truncated polynomial kernels.

\subsection{Asymptotic cases}

\begin{theorem}[uniform convergence to the Mat\'ern covariance kernel]
\label{theorem5}
Let $\alpha>\frac{d}{2}$. As $a$, $\beta$ and $\gamma$ tend to infinity such that $\frac{a}{2\sqrt{\beta \gamma}}$ tends to a positive constant $b$, the Gauss hypergeometric covariance converges uniformly on $\mathbb{R}^d$ to the Mat\'ern covariance with scale factor $b$ and smoothness parameter $\alpha-\frac{d}{2}$: 
\begin{equation}
\label{matern}
\boldsymbol{h} \mapsto \frac{2}{\Gamma(\alpha-\frac{d}{2})} \left( \frac{\|\boldsymbol{h}\|}{2b} \right)^{\alpha-\frac{d}{2}} {K}_{\alpha-\frac{d}{2}}\left( \frac{\|\boldsymbol{h}\|}{b} \right), \quad \boldsymbol{h} \in \mathbb{R}^d, 
\end{equation}
where $K_{\alpha-\frac{d}{2}}$ is the modified Bessel function of the second kind of order $\alpha-\frac{d}{2}$.
\end{theorem}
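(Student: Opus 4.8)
The plan is to pass to the spectral side and upgrade pointwise convergence of spectral densities into uniform convergence of the covariances. Writing $\nu = \alpha - \frac{d}{2}>0$ and letting $M$ and $S$ denote the Mat\'ern kernel \eqref{matern} and its spectral density, the key reduction is that, since the covariance is the inverse Fourier transform of its spectral density \eqref{hankel},
\begin{equation*}
\left| G_d(\boldsymbol h; a,\alpha,\beta,\gamma) - M(\boldsymbol h)\right| = \left|\int_{\mathbb{R}^d}\bigl(\widetilde{G}_d(\boldsymbol u; a,\alpha,\beta,\gamma)-S(\boldsymbol u)\bigr)e^{2\pi\mathrm{i}\,\boldsymbol u\cdot\boldsymbol h}\,\text{d}\boldsymbol u\right|\le\bigl\|\widetilde{G}_d(\cdot; a,\alpha,\beta,\gamma)-S\bigr\|_{L^1}.
\end{equation*}
Because the right-hand side is independent of $\boldsymbol h$, it suffices to prove that $\widetilde{G}_d(\cdot; a,\alpha,\beta,\gamma)\to S$ in $L^1(\mathbb{R}^d)$; the theorem then follows at once.

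To get this, I would first establish pointwise convergence and then invoke Scheffé's lemma. For the pointwise limit, the normalization factor \eqref{hygeo8} behaves, using $\Gamma(x-\tfrac d2)/\Gamma(x)\sim x^{-d/2}$ together with $a^d\sim 2^d b^d(\beta\gamma)^{d/2}$, like $\zeta_d(a,\alpha,\beta,\gamma)\to\pi^{d/2}2^d b^d\,\Gamma(\alpha)/\Gamma(\nu)$. Since $(\pi a\|\boldsymbol u\|)^2/(\beta\gamma)\to 4\pi^2 b^2\|\boldsymbol u\|^2$ and $(\beta\gamma)^n/[(\beta)_n(\gamma)_n]\to 1$ for each fixed $n$, the confluence of the generalized hypergeometric function gives
\begin{equation*}
{}_1F_2\bigl(\alpha;\beta,\gamma;-(\pi a\|\boldsymbol u\|)^2\bigr)\longrightarrow {}_1F_0\bigl(\alpha;;-4\pi^2 b^2\|\boldsymbol u\|^2\bigr)=\bigl(1+4\pi^2 b^2\|\boldsymbol u\|^2\bigr)^{-\alpha},
\end{equation*}
so that $\widetilde{G}_d(\boldsymbol u; a,\alpha,\beta,\gamma)\to \pi^{d/2}2^d b^d\,\tfrac{\Gamma(\alpha)}{\Gamma(\nu)}(1+4\pi^2 b^2\|\boldsymbol u\|^2)^{-\alpha}$ for every fixed $\boldsymbol u$. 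I would then identify this limit with $S$ by comparison with the standard spectral representation of \eqref{matern}: both functions have the shape $(1+4\pi^2 b^2\|\boldsymbol u\|^2)^{-\alpha}$, so only the multiplicative constant must be matched, a routine verification.

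To pass from pointwise to $L^1$ convergence I would use Scheffé's lemma, which applies because all the densities involved are nonnegative and have equal integrals. Indeed, evaluating \eqref{hankel} at the origin and using the normalization \eqref{hygeo8} gives $\int_{\mathbb{R}^d}\widetilde{G}_d(\boldsymbol u; a,\alpha,\beta,\gamma)\,\text{d}\boldsymbol u=g_d(0; a,\alpha,\beta,\gamma)=1$ for every admissible parameter choice, while the short-scale behavior $K_\nu(z)\sim\tfrac12\Gamma(\nu)(z/2)^{-\nu}$ yields $\int_{\mathbb{R}^d}S(\boldsymbol u)\,\text{d}\boldsymbol u=M(\boldsymbol 0)=1$. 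Hence the integrals are constantly equal to $1$, so $\int\widetilde{G}_d\to\int S$ holds trivially, and Scheffé's lemma converts the pointwise convergence of nonnegative densities with equal integrals into convergence in $L^1(\mathbb{R}^d)$, which completes the argument via the first paragraph.

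The hard part will be the confluence step. The naive term-by-term passage to the limit in the series defining ${}_1F_2$ is dominated by $\sum_n (\alpha)_n(4\pi^2 b^2\|\boldsymbol u\|^2)^n/n!$, which diverges precisely when $4\pi^2 b^2\|\boldsymbol u\|^2\ge 1$, i.e.\ exactly where the limiting binomial series ${}_1F_0$ ceases to converge even though the limit function $(1+4\pi^2 b^2\|\boldsymbol u\|^2)^{-\alpha}$ stays finite. To make the limit rigorous on all of $\mathbb{R}^d$ I would avoid summing term by term and instead take the limit inside the beta--Bessel integral representation \eqref{mixbessel}, controlling the integrand through the uniform large-order asymptotics of $J_{\gamma-1}$; equivalently, one may stage the confluence as ${}_1F_2\to{}_1F_1\to{}_1F_0$ and apply Kummer's transformation to render the summands positive so that monotone/dominated convergence applies. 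A further point to check is that the limit does not depend on the individual rates at which $\beta$ and $\gamma$ tend to infinity, only their product being pinned down by $a/(2\sqrt{\beta\gamma})\to b$; this is precisely what the asymptotics $(\beta\gamma)^n/[(\beta)_n(\gamma)_n]\to1$ and $a^d\,\Gamma(\beta-\tfrac d2)\Gamma(\gamma-\tfrac d2)/[\Gamma(\beta)\Gamma(\gamma)]\to 2^d b^d$ guarantee.
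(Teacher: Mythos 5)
Your argument is correct in outline but takes a genuinely different route from the paper. The paper works entirely on the covariance side: it starts from the ${}_2F_1$ expansion \eqref{2F1c1} of $g_d$, applies gamma-function asymptotics (formulae 5.5.3 and 5.11.12 of the NIST handbook) to obtain a ${}_0F_1$ limit, rewrites it in terms of $I_{\pm(\alpha-\frac{d}{2})}$ and hence $K_{\alpha-\frac{d}{2}}$, and then upgrades pointwise to uniform convergence via the monotonicity of Theorem \ref{monotonicity} and Dini's second theorem. You instead work on the spectral side: confluence of ${}_1F_2$ to the Cauchy-type Mat\'ern spectral density, Scheff\'e's lemma (using that all densities integrate to $g_d(0)=M(\boldsymbol 0)=1$), and the bound $\sup_{\boldsymbol h}|G_d-M|\le\|\widetilde G_d-S\|_{L^1}$. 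Your uniformity step is cleaner than the paper's --- it is automatic from Fourier inversion, with no appeal to Dini or to monotonicity --- and your constant-matching and Scheff\'e steps check out. The price is that the entire technical burden is concentrated in the pointwise confluence of the spectral density on all of $\mathbb{R}^d$, which you correctly identify as the crux but do not carry out. Two caveats there: the staged confluence ${}_1F_2\to{}_1F_1\to{}_1F_0$ as you describe it takes $\gamma\to\infty$ and then $\beta\to\infty$ sequentially, whereas the hypothesis only pins down the product $\beta\gamma$ through $a/(2\sqrt{\beta\gamma})\to b$, so you must either justify the iterated limit or run both confluences simultaneously; and dominated convergence inside the representation \eqref{mixbessel} is delicate because the prefactor $(\pi a\rho\sqrt t)^{1-\gamma}J_{\gamma-1}(2\pi a\rho\sqrt t)$ degenerates as $\gamma\to\infty$, so a genuine uniform-in-order bound on Bessel functions is needed rather than the crude $|J_\mu|\le 1$. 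Until one of these is executed, the pointwise limit for $4\pi^2b^2\|\boldsymbol u\|^2\ge1$ remains the one unproved step in your plan.
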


\begin{theorem}[uniform convergence to generalized Laguerre kernel]
\label{laguerre}
As $a$ and $\gamma$ tend to infinity in such a way that $\frac{a}{\sqrt{\gamma}}$ tends to a positive constant $b$, the Gauss hypergeometric
covariance converges uniformly on $\mathbb{R}^d$ to the covariance kernel
\begin{equation*}
\label{laguerre1}
\boldsymbol{h} \mapsto \frac{\Gamma(\beta-\frac{d}{2})}{\Gamma(\alpha-\frac{d}{2})} \, L\left(\frac{d}{2}-\beta+1,\frac{d}{2}-\alpha+1,\frac{\, \| \boldsymbol{h} \|^2}{b^2}\right), \quad \boldsymbol{h} \in \mathbb{R}^d,
\end{equation*}
where $L$ is the Laguerre function of the second kind, defined by \citep[formula D.7]{matheron1965variables}:
\begin{equation*}
\label{laguerre0}
L(\alpha,\beta,x) = \frac{1}{\Gamma(\beta-\alpha)} \int_1^{+\infty} \exp(-u\,x) u^{\alpha-1} (u-1)^{\beta-\alpha-1} du, \quad x \in \mathbb{R}_+, \beta>\alpha.
\end{equation*}
The same result holds by interchanging $\beta$ and $\gamma$.
\end{theorem}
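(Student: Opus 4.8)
The plan is to start from the integral representation \eqref{hygeo9} of the radial part and reduce the limit to a single application of dominated convergence after a change of variable. Writing $r = \|\boldsymbol h\|$ and assuming $0 < r < a$ (the cases $r=0$ and $r \geq a$ being trivial or treated by continuity), I would substitute $u = 1/t$ in \eqref{hygeo9}. A short computation collapsing the powers of $u$ (the total exponent simplifying to $\frac{d}{2}-\beta$) turns the integral into
\begin{equation*}
\int_1^{(a/r)^2} u^{\frac{d}{2}-\beta} (u-1)^{\beta-\alpha-1} \left(1 - \frac{r^2}{a^2}u\right)^{\gamma-\frac{d}{2}-1} \mathrm{d}u,
\end{equation*}
while the prefactor $\frac{\Gamma(\beta-\frac{d}{2})}{\Gamma(\alpha-\frac{d}{2})\Gamma(\beta-\alpha)}$ is left untouched. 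This already matches, term by term, the integrand of the Laguerre function of the second kind $L(\frac{d}{2}-\beta+1,\frac{d}{2}-\alpha+1,\cdot)$ once the last factor is replaced by an exponential.

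The heart of the argument is then the pointwise limit of that last factor. Since $\frac{a^2}{\gamma} \to b^2$, for each fixed $u \geq 1$ one has $\frac{r^2}{a^2}u = \frac{1}{\gamma}\big(\frac{\gamma r^2}{a^2}\big)u$ with $\frac{\gamma r^2}{a^2} \to \frac{r^2}{b^2}$, so the elementary limit $(1 - x_n/n)^n \to e^{-x}$ gives
\begin{equation*}
\left(1 - \frac{r^2}{a^2}u\right)^{\gamma-\frac{d}{2}-1} \longrightarrow \exp\!\left(-\frac{r^2}{b^2}u\right),
\end{equation*}
and the upper integration limit $(a/r)^2 \to +\infty$. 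To pass the limit inside the integral I would extend the integrand to $[1,+\infty)$ by zero beyond $(a/r)^2$ and dominate it: on the domain $0 \leq \frac{r^2}{a^2}u \leq 1$ the inequality $1-x \leq e^{-x}$ yields $\big(1-\frac{r^2}{a^2}u\big)^{\gamma-\frac{d}{2}-1} \leq \exp\!\big(-(\gamma-\frac{d}{2}-1)\frac{r^2}{a^2}u\big)$, and since $(\gamma-\frac{d}{2}-1)\frac{r^2}{a^2} \to \frac{r^2}{b^2}$ this is bounded, for all large enough $a,\gamma$ along the path, by $u^{\frac{d}{2}-\beta}(u-1)^{\beta-\alpha-1}\exp(-\frac{r^2}{2b^2}u)$, which is integrable on $[1,+\infty)$ because $\beta>\alpha$ controls the singularity at $u=1$ and the exponential controls the tail (the condition $\alpha>\frac{d}{2}$ being what guarantees finiteness of the limiting integral as well). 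Dominated convergence then yields convergence of the integral to $\Gamma(\beta-\alpha)\,L(\frac{d}{2}-\beta+1,\frac{d}{2}-\alpha+1,\frac{r^2}{b^2})$, and after multiplying by the untouched prefactor the factor $\Gamma(\beta-\alpha)$ cancels, leaving exactly the announced limit. Evaluating at $r=0$ through the Beta integral $\int_1^{+\infty} u^{\frac{d}{2}-\beta}(u-1)^{\beta-\alpha-1}\mathrm{d}u = \frac{\Gamma(\alpha-\frac{d}{2})\Gamma(\beta-\alpha)}{\Gamma(\beta-\frac{d}{2})}$ confirms that the limit equals $1$ at the origin, consistently with the normalization.

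To upgrade this pointwise convergence to uniform convergence on $\mathbb{R}^d$ — which is where the real work lies — I would exploit monotonicity rather than attempt a uniform-in-$r$ domination (the bound above degrades as $r \to 0$, where $\frac{r^2}{2b^2}\to 0$). By Theorem \ref{monotonicity} each prelimit radial function is nonincreasing on $[0,+\infty)$ with value $1$ at $r=0$, and the limit $\phi(r) = \frac{\Gamma(\beta-\frac{d}{2})}{\Gamma(\alpha-\frac{d}{2})} L(\frac{d}{2}-\beta+1,\frac{d}{2}-\alpha+1,\frac{r^2}{b^2})$ is, by differentiating its integral representation under the integral sign, continuous and decreasing in $r$ with $\phi(0)=1$ and $\phi(r)\to 0$ as $r\to+\infty$. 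Thus $1-\phi$ is a continuous cumulative-distribution-type function and each $1-g_d$ is of the same monotone type, so pointwise convergence forces uniform convergence by Pólya's theorem (equivalently, a Dini-type argument for monotone functions converging to a continuous monotone limit). Finally, the variant with $\beta$ and $\gamma$ interchanged follows at once from the invariance of the kernel \eqref{2F1} under the exchange $\beta \leftrightarrow \gamma$. The single genuine obstacle is the dominated-convergence step with its moving upper endpoint and parameter-dependent integrand; once that is in place, both the identification of the Laguerre function and the monotonicity-based uniformization are routine.
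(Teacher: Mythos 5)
Your proof is correct, but it follows a genuinely different route from the paper's. The paper starts from the connection formula \eqref{2F1c1}, which writes $g_d$ as a combination of two Gauss hypergeometric series in $r^2/a^2$; it then lets $\gamma\to\infty$ term by term (via the confluence ${}_2F_1\to{}_1F_1$, justified by dominated convergence on the series) to reach the intermediate expansion \eqref{laguerre2}, and finally identifies the resulting combination of two ${}_1F_1$ functions as a Laguerre function of the second kind through Matheron's formula D.8. You instead work directly on the integral representation \eqref{hygeo9}: the substitution $u=1/t$ (your exponent bookkeeping giving $u^{\frac{d}{2}-\beta}$ is correct) turns the kernel into the Laguerre integral with $(1-\tfrac{r^2}{a^2}u)^{\gamma-\frac{d}{2}-1}$ in place of $e^{-ur^2/b^2}$, and a single dominated-convergence argument with the bound $(1-x)^p\le e^{-px}$ finishes the pointwise limit; the cancellation of $\Gamma(\beta-\alpha)$ and the value $1$ at the origin check out. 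Your approach buys directness and generality: it bypasses the Kummer confluence formulas and Matheron's D.8 identity, makes the appearance of the Laguerre function transparent from its defining integral, and avoids the case distinction $\alpha-\frac{d}{2}\in\mathbb{N}$ that \eqref{2F1c1} would otherwise force (the paper handles that case by a continuity argument only in the Mat\'ern proof). What the paper's route buys is the reusable intermediate formula \eqref{laguerre2}, which it then recycles for the Tricomi, incomplete-gamma and second Gaussian limits. Your uniformization step — monotone prelimits, continuous decreasing limit vanishing at infinity, P\'olya/Dini — is the same device the paper invokes, and your treatment of the $\beta\leftrightarrow\gamma$ symmetry is likewise the intended one.
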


\begin{theorem}[uniform convergence to Tricomi's confluent hypergeometric kernel]
\label{tricomi}
As $\alpha-\frac{d}{2}$ tends to a positive even integer $2n$ and $a$ and $\gamma$ tend to infinity such that $\frac{a}{\sqrt{\gamma}}$ tends to a positive constant $b$, the Gauss hypergeometric
covariance converges uniformly on $\mathbb{R}^d$ to the  covariance kernel
\begin{equation*}
\label{kummer}
\boldsymbol{h} \mapsto \frac{\Gamma(\frac{d}{2}-\beta+2n+1)}{\Gamma(2n)} \,
U\left(\frac{d}{2}-\beta+1,1-2n,-\frac{\| \boldsymbol{h} \|^2}{b^2}\right), \quad \boldsymbol{h} \in \mathbb{R}^d,
\end{equation*}
where $U$ is Tricomi's confluent hypergeometric function  \citep[formula 13.2.6]{olver2010nist}. The same result holds by interchanging $\beta$ and $\gamma$.
\end{theorem}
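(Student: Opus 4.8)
The plan is to obtain this limit as the degenerate specialization of the generalized Laguerre limit already established in Theorem \ref{laguerre}, the only genuinely new ingredient being a confluent-hypergeometric identity that becomes available when the relevant parameter collapses to an integer. First I would invoke Theorem \ref{laguerre}: as $a$ and $\gamma$ tend to infinity with $\frac{a}{\sqrt{\gamma}} \to b$ (and $\alpha, \beta$ fixed), the Gauss hypergeometric covariance converges uniformly on $\mathbb{R}^d$ to $\frac{\Gamma(\beta - \frac d2)}{\Gamma(\alpha - \frac d2)}\, L\bigl(\tfrac d2 - \beta + 1, \tfrac d2 - \alpha + 1, \tfrac{\|\boldsymbol h\|^2}{b^2}\bigr)$, with $L$ Matheron's Laguerre function of the second kind. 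I would then let $\alpha - \tfrac d2 \to 2n$ and note that the second parameter of $L$ tends to the non-positive integer $1 - 2n$ while $\Gamma(\alpha - \frac d2) \to \Gamma(2n)$. Since $L$ is the convergent integral in \citep[formula D.7]{matheron1965variables}, which is jointly continuous in its two parameters as long as their difference $\beta - \alpha > 0$ stays positive (guaranteed for $(\alpha,\beta,\gamma) \in \mathcal{P}_d$), the limiting object is simply $\frac{\Gamma(\beta - \frac d2)}{\Gamma(2n)}\, L\bigl(\tfrac d2 - \beta + 1, 1 - 2n, \tfrac{\|\boldsymbol h\|^2}{b^2}\bigr)$, and the whole theorem reduces to identifying this with the stated Tricomi kernel. (A direct passage to the limit in \eqref{2F1} reproduces the same positive-argument form, so the sign reversal in the argument of $U$ is not an accident of the derivation but must be supplied by an identity.)

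The core is therefore the special-function identity, written with $x = \|\boldsymbol h\|^2/b^2$ and $c = \beta - \tfrac d2$:
\begin{equation*}
\Gamma(c)\, L(1 - c, 1 - 2n, x) = \Gamma(2n + 1 - c)\, U(1 - c, 1 - 2n, -x).
\end{equation*}
I would prove it by first using the substitution $u = 1 + t$ in Matheron's integral to obtain the elementary relation $L(A, B, x) = e^{-x}\, U(B - A, B, x)$ (equivalently, the standard integral representation of $U$ for positive argument), which turns the left-hand side into $\Gamma(c)\, e^{-x}\, U(c - 2n, 1 - 2n, x)$. Applying Kummer's transformation $U(p, q, z) = z^{1 - q}\, U(p - q + 1, 2 - q, z)$ to both the left-hand $U$ (at argument $x$) and the target $U$ (at argument $-x$) raises the second parameter to the positive integer $1 + 2n$. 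Here the hypothesis that $2n$ is even enters twice: it makes the power $(-x)^{2n} = x^{2n}$ single-valued and real, and it makes the reduced second parameter $1 + 2n$ odd. After cancelling the common factor $x^{2n}$, the claim becomes the reflection relation $\Gamma(a)\, e^{-x}\, U(a, 1 + 2n, x) = \Gamma(1 + 2n - a)\, U(1 + 2n - a, 1 + 2n, -x)$ with $a = c$.

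The main obstacle is this reflection relation, because it is \emph{false} for a generic (non-integer) second parameter and holds precisely at the odd integers. The plan is to expand $U$ through the two Kummer solutions $M(a, q, z)$ and $z^{1 - q} M(a - q + 1, 2 - q, z)$ --- valid for non-integer $q$ --- flip the sign of the argument by the Kummer relation $M(a, q, z) = e^{z} M(q - a, q, -z)$, and compare the two resulting coefficients: the coefficient of the $z^{1 - q}$-solution matches because $(-1)^{1 - q} = 1$ for odd $q$, and the coefficient of $M(a, q, z)$ matches because $\sin(\pi a) = \sin(\pi(q - a))$ when $q$ is an odd integer, both using $q = 1 + 2n$. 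The delicate point is that at the integer value $q = 1 + 2n$ the two solutions coalesce and the Gamma prefactors in this expansion develop poles (the logarithmic confluent case), so the coefficient matching cannot be invoked verbatim; I expect this to be the part requiring care. I would make it rigorous either by a controlled passage to the limit $q \to 1 + 2n$ through non-integer values, tracking the cancellation of the divergent parts and fixing the real branch of $U$ at negative argument, or --- more economically --- by citing the tabulation of the Laguerre functions of the second kind at integer parameters in terms of Tricomi's function in \citep[Appendix D]{matheron1965variables}.

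Finally, the uniform convergence on $\mathbb{R}^d$ is inherited from Theorem \ref{laguerre}, the additional parameter passage $\alpha - \tfrac d2 \to 2n$ being uniform by the continuity of $g_d$ and of the limit with respect to $\alpha$ (Theorem \ref{continuity}) together with the tail control already used for the Laguerre limit; alternatively one reruns that spectral argument with $\alpha$ allowed to vary. The assertion that ``the same result holds by interchanging $\beta$ and $\gamma$'' is immediate, since the closed form \eqref{2F1} --- both its Gamma prefactor and its ${}_2 F_1$ factor --- is symmetric under $\beta \leftrightarrow \gamma$, so the two limiting regimes coincide.
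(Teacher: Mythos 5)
Your proposal is correct and rests on the same three ingredients as the paper's one-line proof --- the asymptotic expansion \eqref{laguerre2}, the connection formula expressing $U$ through the two Kummer solutions (DLMF 13.2.42), and the reflection formula used to reconcile the Gamma coefficients --- with the evenness of $2n$ consumed in exactly the two places you identify ($(-z)^{1-q}=z^{1-q}$ and $\sin(\pi a)=\sin(\pi(q-a))$). The organization, however, is genuinely different. The paper matches the two ${}_1F_1$ terms of \eqref{laguerre2} directly against $\tfrac{\Gamma(1-B)}{\Gamma(A+1-B)}M(A,B,z)+\tfrac{\Gamma(B-1)}{\Gamma(A)}z^{1-B}M(A-B+1,2-B,z)$ with $A=\tfrac{d}{2}-\beta+1$, $B=\tfrac{d}{2}-\alpha+1$ and then sends $\alpha-\tfrac{d}{2}\to 2n$; this is a singular limit in which both terms of \eqref{laguerre2} diverge individually (the first ${}_1F_1$ is undefined at the non-positive integer denominator parameter $1-2n$, and $\Gamma(\tfrac{d}{2}-\alpha)$ has a pole at $-2n$), the divergences cancelling to produce the logarithmic case of $U$. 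Your detour through Theorem \ref{laguerre} buys a cleaner separation: Matheron's $L$ is a convergent integral, continuous at the integer parameter $1-2n$, so the passage $\alpha-\tfrac{d}{2}\to 2n$ is regular on your side, and all of the confluence is quarantined inside the standalone identity $\Gamma(c)\,e^{-x}U(c,1+2n,x)=\Gamma(2n+1-c)\,U(2n+1-c,1+2n,-x)$. The residual difficulty you flag is real and is the only substantive gap: at $q=1+2n$ the two Kummer solutions coalesce, the prefactors develop poles, and the principal branch of $U$ on the negative real axis acquires a $\log$-induced imaginary part unless $\beta-\tfrac{d}{2}$ is an integer, so a real determination of $U(\cdot,1-2n,-x)$ must be fixed (e.g., by taking your identity, whose left-hand side is manifestly real, as the definition, or by a controlled limit through non-integer $q$). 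The paper's proof does not address this point either, so your argument is at least as complete. Your handling of the uniform convergence and of the $\beta\leftrightarrow\gamma$ symmetry of \eqref{2F1} is consistent with the paper.
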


\begin{theorem}[uniform convergence to incomplete gamma kernel]
\label{incgamma}
As $a$ and $\gamma$ tend to infinity in such a way that $\frac{a}{\sqrt{\gamma}}$ tends to a positive constant $b$ and $\beta = \frac{d}{2}+1$, the Gauss hypergeometric
covariance converges uniformly on $\mathbb{R}^d$ to the  covariance kernel
\begin{equation*}
\label{power1}
\boldsymbol{h} \mapsto 
Q\left(\alpha-\frac{d}{2},\frac{\| \boldsymbol{h} \|^2}{b^2}\right), \quad \boldsymbol{h} \in \mathbb{R}^d,
\end{equation*}
where $Q$ is the regularized incomplete gamma function \citep[formula 8.2.4]{olver2010nist}. The same result holds by interchanging $\beta$ and $\gamma$.
\end{theorem}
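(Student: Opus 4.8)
The plan is to obtain this statement as a specialization of Theorem \ref{laguerre} rather than redoing the asymptotic analysis from scratch. The present hypotheses are exactly those of Theorem \ref{laguerre} together with the single extra constraint $\beta = \frac{d}{2}+1$, so the Gauss hypergeometric covariance already converges uniformly on $\mathbb{R}^d$ to the generalized Laguerre kernel
\[
\boldsymbol{h} \mapsto \frac{\Gamma(\beta-\frac{d}{2})}{\Gamma(\alpha-\frac{d}{2})}\,L\left(\frac{d}{2}-\beta+1,\frac{d}{2}-\alpha+1,\frac{\|\boldsymbol{h}\|^2}{b^2}\right),
\]
and all that remains is to evaluate this limit at $\beta=\frac{d}{2}+1$. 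Setting $s=\alpha-\frac{d}{2}$ (which lies in $(0,1)$, since membership of $(\alpha,\frac{d}{2}+1,\gamma)$ in $\mathcal{P}_d$ for large $\gamma$ forces $\frac{d}{2}<\alpha<\frac{d}{2}+1$), the choice $\beta=\frac{d}{2}+1$ gives $\Gamma(\beta-\frac{d}{2})=\Gamma(1)=1$ and turns the first argument of the Laguerre function into $\frac{d}{2}-\beta+1=0$, so the limit collapses to $\frac{1}{\Gamma(s)}\,L(0,1-s,\frac{\|\boldsymbol{h}\|^2}{b^2})$.

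The only genuine content is then the elementary identity $L(0,1-s,x)=\Gamma(s,x)$, where $\Gamma(s,\cdot)$ is the upper incomplete gamma function; granting it, $\frac{1}{\Gamma(s)}L(0,1-s,x)=Q(s,x)$ and the theorem follows with $s=\alpha-\frac{d}{2}$ and $x=\frac{\|\boldsymbol{h}\|^2}{b^2}$. I would verify the identity by differentiating the integral definition of $L$ in $x$: writing $L(0,1-s,x)=\frac{1}{\Gamma(1-s)}\int_1^{+\infty}e^{-ux}u^{-1}(u-1)^{-s}\,\text{d}u$ and differentiating under the integral sign, the factor $u^{-1}$ cancels, and the substitution $w=u-1$ turns the result into $-\frac{e^{-x}}{\Gamma(1-s)}\int_0^{+\infty}e^{-wx}w^{-s}\,\text{d}w=-x^{s-1}e^{-x}$, which is precisely $\frac{\text{d}}{\text{d}x}\Gamma(s,x)$. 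Since both $L(0,1-s,x)$ and $\Gamma(s,x)$ vanish as $x\to+\infty$, equality of derivatives yields equality of the functions.

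The uniform convergence requires no extra work: it is inherited verbatim from Theorem \ref{laguerre}, of which this is a special case with a fixed admissible value of $\beta$. Likewise, the ``interchanging $\beta$ and $\gamma$'' variant (take $\gamma=\frac{d}{2}+1$ and let $\beta\to+\infty$) follows from the corresponding symmetry already recorded in Theorem \ref{laguerre}, since $\beta$ and $\gamma$ enter the kernel construction symmetrically. I do not anticipate any serious obstacle here; the only point to handle with care is verifying that $\beta=\frac{d}{2}+1$ keeps the parameters inside $\mathcal{P}_d$ as $\gamma\to\infty$, which pins $\alpha$ to the interval $(\frac{d}{2},\frac{d}{2}+1)$ and simultaneously guarantees convergence (at both endpoints) of the integral defining $L(0,1-s,\cdot)$.
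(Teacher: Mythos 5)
Your proof is correct and follows essentially the same route as the paper: the paper likewise specializes the $\gamma\to\infty$ Kummer/Laguerre asymptotic expansion (its equation \eqref{laguerre2}, which underlies Theorem \ref{laguerre}) at $\beta=\frac{d}{2}+1$ and identifies the limit with $Q(\alpha-\frac{d}{2},\cdot)$ via handbook identities, where you instead derive the needed identity $L(0,1-s,x)=\Gamma(s,x)$ directly from the integral representation of the Laguerre function. Your two side remarks --- that membership in $\mathcal{P}_d$ with $\beta=\frac{d}{2}+1$ and $\gamma$ large pins $\alpha$ to $(\frac{d}{2},\frac{d}{2}+1)$, and that uniform convergence is inherited from Theorem \ref{laguerre} --- are both sound.
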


\begin{remark}
If, furthermore, $\alpha = \frac{d+1}{2}$, one obtains the complementary error function $\erfc(\frac{\| \boldsymbol{h} \|}{b})$, which is positive semidefinite in $\mathbb{R}^d$ for any dimension $d$ \citep{Gneiting1999}. 
\end{remark}

\begin{theorem}[uniform convergence to the Gaussian kernel, part 1]
\label{theorem6}
As $a, \alpha, \beta, \gamma$ tend to infinity in such a way that $a \sqrt{\frac{\alpha}{\beta \gamma}}$ tends to a positive constant $b$, the Gauss hypergeometric covariance converges uniformly on $\mathbb{R}^d$ to the Gaussian covariance with scale factor $b$:
\begin{equation}
\label{gauss}
\boldsymbol{h} \mapsto \exp \left( -\frac{\|\boldsymbol{h}\|^2}{b^2} \right), \quad \boldsymbol{h} \in \mathbb{R}^d. 
\end{equation}
\end{theorem}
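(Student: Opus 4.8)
My plan is to bypass the spectral density entirely and work with the closed integral form \eqref{hygeo9}, which I first recast as an expectation against a Beta law. Writing $c=\|\boldsymbol h\|^2/a^2$ and factoring $(t-c)^{\gamma-\frac d2-1}=t^{\gamma-\frac d2-1}(1-c/t)^{\gamma-\frac d2-1}$ inside \eqref{hygeo9}, the product $t^{\alpha-\gamma}\,t^{\gamma-\frac d2-1}=t^{\alpha-\frac d2-1}$ combines with $(1-t)^{\beta-\alpha-1}$ and the gamma prefactor into exactly the density of a $\mathrm{Beta}(\alpha-\frac d2,\beta-\alpha)$ law (legitimate since $\alpha>\frac d2$ and $\beta>\alpha$ on $\mathcal P_d$). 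Hence, for every $\boldsymbol h\in\mathbb R^d$,
\[
G_d(\boldsymbol h;a,\alpha,\beta,\gamma)=\mathbb E\!\left[\left(1-\frac{\|\boldsymbol h\|^2}{a^2T}\right)_+^{\gamma-\frac d2-1}\right],\qquad T\sim\mathrm{Beta}\!\left(\alpha-\tfrac d2,\beta-\alpha\right),
\]
the positive part also accounting for the case $\|\boldsymbol h\|>a$. The crucial feature is that the integrand lies in $[0,1]$, so there is no oscillation or cancellation to control --- in contrast with the ${}_1F_2$ series of \eqref{spectraldensity2}, whose argument $-(\pi a\|\boldsymbol u\|)^2$ diverges as $a\to\infty$ and whose terms are not dominated termwise.

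Next I would establish pointwise convergence of this expectation. The law of $T$ concentrates: its mean is $\mu=\frac{\alpha-d/2}{\beta-d/2}$ and its relative variance is $\mathrm{Var}(T)/\mu^2=\frac{\beta-\alpha}{(\alpha-\frac d2)(\beta-\frac d2+1)}\le(\alpha-\tfrac d2)^{-1}\to0$, so $T/\mu\to1$ in probability by Chebyshev's inequality. Because $t\mapsto(1-\frac{\|\boldsymbol h\|^2}{a^2t})_+^{\gamma-\frac d2-1}$ is nondecreasing, I can squeeze the expectation between its values at $\mu(1\pm\varepsilon)$, up to the vanishing mass $\mathbb P(|T/\mu-1|>\varepsilon)$. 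At the concentration point one has $(\gamma-\frac d2-1)\frac{\|\boldsymbol h\|^2}{a^2\mu}=\frac{(\gamma-\frac d2-1)(\beta-\frac d2)}{\alpha-\frac d2}\,\frac{\|\boldsymbol h\|^2}{a^2}\to\frac{\|\boldsymbol h\|^2}{b^2}$ by the defining constraint $a^2\alpha/(\beta\gamma)\to b^2$, while the base offset $\frac{\|\boldsymbol h\|^2}{a^2\mu}\to0$; hence $(1-\frac{\|\boldsymbol h\|^2}{a^2\mu})^{\gamma-\frac d2-1}\to\exp(-\|\boldsymbol h\|^2/b^2)$. Letting $\varepsilon\to0$ in the squeeze gives $G_d(\boldsymbol h;a,\alpha,\beta,\gamma)\to\exp(-\|\boldsymbol h\|^2/b^2)$ for each fixed $\boldsymbol h$.

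Finally I would upgrade pointwise to uniform convergence. Each $g_d(\cdot;a,\alpha,\beta,\gamma)$ is continuous (Theorem \ref{continuity}) and nonincreasing on $[0,+\infty[$ with $g_d(0)=1$ (Theorem \ref{monotonicity}), so $r\mapsto1-g_d(r;a,\alpha,\beta,\gamma)$ is a bona fide distribution function converging pointwise to the continuous distribution function $r\mapsto1-\exp(-r^2/b^2)$; Pólya's theorem then promotes the convergence to uniform on $[0,+\infty[$, that is, uniform on $\mathbb R^d$. The main obstacle is the middle step: one must verify that the three divergences $a,\alpha,\beta,\gamma\to\infty$, tied only by the single scalar constraint $a^2\alpha/(\beta\gamma)\to b^2$, conspire to produce exactly the constant $b^{-2}$ in the exponent, and that the concentration window of $T$ (relative width $O(\alpha^{-1/2})$) is narrow enough that the variation of the integrand across it is negligible against the target $\exp(-\|\boldsymbol h\|^2/b^2)$, including in the regime $\mu\to0$; the monotonicity of the integrand in $t$ is precisely what keeps this estimate clean.
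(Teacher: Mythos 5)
Your argument is correct, and it takes a genuinely different route from the paper. The paper proves this theorem either by chaining Theorem \ref{theorem5} (the Mat\'ern limit, stated for fixed $\alpha$) with the classical fact that the Mat\'ern covariance of smoothness $\alpha$ and scale $b/(2\sqrt{\alpha})$ tends to the Gaussian as $\alpha\to+\infty$, or, alternatively, by showing that the spectral density \eqref{spectraldensity2} is asymptotically equivalent to $\pi^{d/2}b^d\exp(-(\pi b\|\boldsymbol u\|)^2)$. You instead stay entirely in the spatial domain: your observation that \eqref{hygeo9} is exactly $\mathbb{E}[(1-\|\boldsymbol h\|^2/(a^2T))_+^{\gamma-\frac d2-1}]$ for $T\sim\mathrm{Beta}(\alpha-\frac d2,\beta-\alpha)$ is a clean and correct reparameterization (the exponents $t^{\alpha-\gamma}\cdot t^{\gamma-\frac d2-1}=t^{\alpha-\frac d2-1}$ and the gamma prefactor do match the $\mathrm{Beta}(\alpha-\frac d2,\beta-\alpha)$ normalization since $(\alpha-\frac d2)+(\beta-\alpha)=\beta-\frac d2$), and the concentration bound $\mathrm{Var}(T)/\mu^2\le(\alpha-\frac d2)^{-1}\to0$ together with the monotonicity of $t\mapsto(1-c/t)_+^{\gamma-\frac d2-1}$ makes the squeeze rigorous; the exponent computation $(\gamma-\frac d2-1)\,c/\mu\to\|\boldsymbol h\|^2/b^2$ under $a^2\alpha/(\beta\gamma)\to b^2$ checks out. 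Your approach buys a genuinely joint limit in $(a,\alpha,\beta,\gamma)$ in one step, avoiding the iterated-limit flavor of the paper's first argument and the Fourier-inversion step implicit in its second; the paper's approach buys brevity and reuses machinery (the expansion \eqref{2F1c1} and Theorem \ref{theorem5}) already developed for the other asymptotic results. Your passage from pointwise to uniform convergence via monotonicity and P\'olya's theorem is essentially the same device the paper uses (Dini's second theorem) in the proof of Theorem \ref{theorem5}. Two cosmetic caveats: the representation requires $\beta>\alpha$ and the boundedness of the integrand requires $\gamma\ge\frac d2+1$, both of which hold eventually in the stated asymptotic regime and on $\mathcal P_d$, so neither is a gap.
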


\begin{theorem}[uniform convergence to the Gaussian kernel, part 2]
\label{theorem6b}
As $\beta$ tends to $\alpha$ and $a$ and $\gamma$ tend to infinity in such a way that $(\alpha,\beta,\gamma) \in \mathcal{P}_d$ and $\frac{a}{\sqrt{\gamma}}$ tends to a positive constant $b$, the Gauss hypergeometric covariance converges uniformly on $\mathbb{R}^d$ to the Gaussian covariance with scale factor $b$. The same result holds by interchanging $\beta$ and $\gamma$.
\end{theorem}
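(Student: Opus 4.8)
The plan is to prove convergence at the level of spectral densities and transfer it to the covariances by Fourier inversion. Denote by $\widetilde{G}_\infty(\boldsymbol{u}) = \pi^{d/2} b^d \exp(-\pi^2 b^2 \|\boldsymbol{u}\|^2)$ the spectral density of the target Gaussian covariance \eqref{gauss}; a Gaussian integral shows $\int_{\mathbb{R}^d}\widetilde{G}_\infty(\boldsymbol{u})\,\mathrm{d}\boldsymbol{u}=1$, the value of \eqref{gauss} at the origin. I would rely on two facts. First, every spectral density \eqref{spectraldensity2} is nonnegative on $\mathcal{P}_d$ and, by Fourier inversion, satisfies $\int_{\mathbb{R}^d}\widetilde{G}_d(\boldsymbol{u};a,\alpha,\beta,\gamma)\,\mathrm{d}\boldsymbol{u}=G_d(\boldsymbol{0};a,\alpha,\beta,\gamma)=1$, the normalization \eqref{hygeo8} being chosen exactly so that this mass equals one. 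Second, $G_d$ is the inverse Fourier transform of $\widetilde{G}_d$, whence $\sup_{\boldsymbol{h}\in\mathbb{R}^d}\bigl|G_d(\boldsymbol{h})-G_\infty(\boldsymbol{h})\bigr|\le\|\widetilde{G}_d-\widetilde{G}_\infty\|_{L^1(\mathbb{R}^d)}$, where $G_\infty$ is the inverse Fourier transform of $\widetilde{G}_\infty$, i.e.\ the Gaussian covariance \eqref{gauss}. It therefore suffices to show $\widetilde{G}_d\to\widetilde{G}_\infty$ in $L^1(\mathbb{R}^d)$.

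Next I would establish the pointwise limit $\widetilde{G}_d(\boldsymbol{u})\to\widetilde{G}_\infty(\boldsymbol{u})$ for each fixed $\boldsymbol{u}$. In the normalization \eqref{hygeo8}, the factor $\Gamma(\alpha)\Gamma(\beta-\tfrac d2)/[\Gamma(\alpha-\tfrac d2)\Gamma(\beta)]\to 1$ as $\beta\to\alpha$, while the ratio asymptotics $\Gamma(\gamma-\tfrac d2)/\Gamma(\gamma)\sim\gamma^{-d/2}$ together with $a^2/\gamma\to b^2$ give $a^d\,\Gamma(\gamma-\tfrac d2)/\Gamma(\gamma)\to b^d$, so that $\zeta_d(a,\alpha,\beta,\gamma)\to\pi^{d/2}b^d$. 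For the hypergeometric factor I would rescale the argument as $-(\pi a\|\boldsymbol{u}\|)^2=\gamma v$, with $v=-(\pi a\|\boldsymbol{u}\|)^2/\gamma\to-(\pi b\|\boldsymbol{u}\|)^2=:v_\infty$, and write the series \eqref{pFq} as ${}_1F_2(\alpha;\beta,\gamma;\gamma v)=\sum_{n\ge 0}\frac{(\alpha)_n}{(\beta)_n}\,\frac{\gamma^n}{(\gamma)_n}\,\frac{v^n}{n!}$, where $(x)_n=\Gamma(x+n)/\Gamma(x)$. On $\mathcal{P}_d$ one has $\beta>\alpha$ and $\gamma>0$, so each coefficient obeys $0<\frac{(\alpha)_n}{(\beta)_n}\frac{\gamma^n}{(\gamma)_n}\le 1$ and tends termwise to $1$; since $|v|$ stays bounded along the limit, dominated convergence for series (with summable majorant $\sum_n|v|^n/n!$) gives ${}_1F_2(\alpha;\beta,\gamma;-(\pi a\|\boldsymbol{u}\|)^2)\to e^{v_\infty}=\exp(-\pi^2 b^2\|\boldsymbol{u}\|^2)$. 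Multiplying the two limits yields $\widetilde{G}_d(\boldsymbol{u})\to\pi^{d/2}b^d\exp(-\pi^2 b^2\|\boldsymbol{u}\|^2)=\widetilde{G}_\infty(\boldsymbol{u})$.

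Since the $\widetilde{G}_d$ are nonnegative, converge pointwise to the nonnegative $\widetilde{G}_\infty$, and all carry unit mass (so $\int\widetilde{G}_d\to\int\widetilde{G}_\infty$ holds trivially), Scheff\'e's lemma gives $\|\widetilde{G}_d-\widetilde{G}_\infty\|_{L^1(\mathbb{R}^d)}\to 0$, and the bound of the first paragraph then delivers the claimed uniform convergence of $G_d$ to the Gaussian covariance \eqref{gauss}. The variant obtained by interchanging $\beta$ and $\gamma$ is immediate, as both the ${}_1F_2$ factor and the prefactor $\Gamma(\beta-\tfrac d2)\Gamma(\gamma-\tfrac d2)/[\Gamma(\beta)\Gamma(\gamma)]$ in \eqref{spectraldensity2} are symmetric in $(\beta,\gamma)$, so the argument applies verbatim with the roles of $\beta$ and $\gamma$ exchanged. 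I expect the one genuinely delicate step to be the pointwise hypergeometric limit: the series argument diverges while $\beta\to\alpha$ and $\gamma\to\infty$, so a naive termwise passage is invalid; the rescaling $-(\pi a\|\boldsymbol{u}\|)^2=\gamma v$ combined with the uniform coefficient bound $\frac{(\alpha)_n}{(\beta)_n}\frac{\gamma^n}{(\gamma)_n}\le 1$ is what simultaneously controls both limits and legitimizes dominated convergence.
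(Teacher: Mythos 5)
Your proof is correct, but it takes a genuinely different route from the paper's. The paper stays on the covariance side: it starts from the expansion \eqref{2F1c1}, passes to the confluent asymptotic form \eqref{laguerre2} as $\gamma\to+\infty$, and then observes that as $\beta\to\alpha$ the first Kummer term collapses to $\exp(-r^2/b^2)$ (since ${}_1F_1(c;c;z)=e^z$) while the second term vanishes because of the factor $1/\Gamma(\beta-\alpha)\to 0$; uniform convergence is then obtained, as in Theorem~\ref{theorem5}, from monotonicity of $r\mapsto g_d(r;a,\alpha,\beta,\gamma)$ and Dini's second theorem. You instead work entirely on the spectral side: the rescaling $-(\pi a\|\boldsymbol{u}\|)^2=\gamma v$ with the uniform coefficient bound $0<\frac{(\alpha)_n}{(\beta)_n}\frac{\gamma^n}{(\gamma)_n}\le 1$ (which uses $\beta>\alpha$, guaranteed on $\mathcal{P}_d$) legitimizes a Tannery-type termwise passage in \eqref{pFq}, the gamma-ratio asymptotics handle the normalization \eqref{hygeo8}, and Scheff\'e's lemma plus the elementary bound $\sup_{\boldsymbol h}|G_d-G_\infty|\le\|\widetilde G_d-\widetilde G_\infty\|_{L^1}$ convert pointwise spectral convergence into uniform convergence of the covariances. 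Your route is closer in spirit to the second (spectral) argument the paper gives for Theorem~\ref{theorem6}, and it buys a cleaner, self-contained proof of uniformity on all of $\mathbb{R}^d$ without invoking the monotonicity/Dini machinery or special-function confluence formulas; the paper's route, in exchange, exposes the intermediate Laguerre/Tricomi structure and uses a single method shared across Theorems~\ref{laguerre}--\ref{theorem6b}. The only cosmetic caveats are that the multi-parameter limit is along a net, so Scheff\'e and Tannery should formally be applied along arbitrary sequences within it, and that integrability of $\widetilde G_d$ (needed for Fourier inversion and the unit-mass identity) should be cited from Theorem~\ref{theorem1}; neither affects the validity of the argument.
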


\begin{remark}
All the previous asymptotic kernels are positive semidefinite in Euclidean spaces of any dimension $d$, as the parameters $(\alpha,\beta,\gamma)$ can belong to $\mathcal{P}_d$ for sufficiently large $\beta$ and/or $\gamma$ values.
\end{remark}

\section{Multivariate compactly-supported hypergeometric covariance kernels}
\label{multivariate}

Let $p$ be a positive integer and consider a $p \times p$ matrix-valued kernel as:
\begin{equation}
\label{hygeo13}
   {\boldsymbol G}_d({\boldsymbol{h}}; {\boldsymbol a},{\boldsymbol \alpha},{\boldsymbol \beta},{\boldsymbol \gamma}, \boldsymbol \rho) = [\rho_{ij} G_d({\boldsymbol h}; a_{ij},\alpha_{ij},\beta_{ij},\gamma_{ij})]_{i,j=1}^p, \quad \boldsymbol{h} \in \mathbb{R}^d,
\end{equation}
where ${\boldsymbol a} = [a_{ij}]_{i,j=1}^p$, ${\boldsymbol \alpha} =  [\alpha_{ij}]_{i,j=1}^p$, ${\boldsymbol \beta} =  [\beta_{ij}]_{i,j=1}^p$, ${\boldsymbol \gamma} =  [\gamma_{ij}]_{i,j=1}^p$ and ${\boldsymbol \rho} =  [\rho_{ij}]_{i,j=1}^p$ are symmetric real-valued matrices  of size $p \times p$. The following theorem establishes various sufficient conditions on these matrices for ${\boldsymbol G}_d({\boldsymbol{h}}; {\boldsymbol a},{\boldsymbol \alpha},{\boldsymbol \beta},{\boldsymbol \gamma}, \boldsymbol \rho)$ to be a valid matrix-valued covariance kernel in $\mathbb{R}^d$.

\begin{theorem}[Multivariate sufficient validity conditions]
\label{theorem2aa}
The $p$-variate Gauss hypergeometric kernel \eqref{hygeo13} is a valid matrix-valued covariance kernel in $\mathbb{R}^d$ if the following sufficient conditions hold (see the definitions of conditionally negative semidefinite matrices and multiply monotone functions in Appendix \ref{appendixA}):
\begin{itemize}
\item [(1).] 
\begin{itemize}
    \item[(i)] $\boldsymbol{a} = a \boldsymbol{1}$ with $a>0$;
    \item[(ii)] $\boldsymbol{\alpha} = \alpha \boldsymbol{1}$;
    \item[(iii)] $\boldsymbol{\beta}$ is symmetric and conditionally negative semidefinite;
    \item[(iv)] $\boldsymbol{\gamma}$ is symmetric and conditionally negative semidefinite;
    \item[(v)] $(\alpha,\beta_{ij},\gamma_{ij}) \in \mathcal{P}_{d}$ for all $i,j$ in $[1,\hdots,p]$;
    \item[(vi)] $(\alpha,\beta,\gamma) \in \mathcal{P}_{0}$, with $\beta < \beta_{ij}$ and $\gamma < \gamma_{ij}$ for all $i,j$ in $[1,\hdots,p]$;
    \item[(vii)] $\left[\frac{\rho_{ij} \Gamma(\beta_{ij}-\frac{d}{2}) \Gamma(\gamma_{ij}-\frac{d}{2})}{\Gamma(\beta_{ij}-\beta) \Gamma(\gamma_{ij}-\gamma)} \right]_{i,j=1}^p$ is symmetric and positive semidefinite;
\end{itemize}
\item [or]
\item [(2).] 
\begin{itemize}
    \item[(i)] $a_{ij} = \max\{\varepsilon_i, \varepsilon_j\}$ if $i \neq j$ and $a_{ii} = \varepsilon_i - \delta_i$, with $0 \leq \delta_i < \varepsilon_i$ for $i = 1,\hdots,p$;
    \item[(ii)] $\boldsymbol{\alpha} = \alpha \boldsymbol{1}$;
    \item[(iii)] $\boldsymbol{\beta}$ is symmetric and conditionally negative semidefinite;
    \item[(iv)] $\boldsymbol{\gamma}$ is symmetric and conditionally negative semidefinite;
    \item[(v)] $(\alpha,\beta_{ij},\gamma_{ij}) \in \mathcal{P}_{d}$ for all $i,j$ in $[1,\hdots,p]$;
    \item[(vi)] $(\alpha+1,\beta+1,\gamma+1) \in \mathcal{P}_0$;
    \item[(vii)] $\left[\frac{\rho_{ij} a_{ij}^d \Gamma(\beta_{ij}-\frac{d}{2}) \Gamma(\gamma_{ij}-\frac{d}{2})}{\Gamma(\beta_{ij}-\beta) \Gamma(\gamma_{ij}-\gamma)} \right]_{i,j=1}^p$ is symmetric and positive semidefinite;
\end{itemize}
\item [or]
\item [(3).] 
\begin{itemize}
    \item[(i)] $a_{ij}^2 = \psi_1(\| \boldsymbol{s}_i - \boldsymbol{s}_j \|)$, with $\psi_1$ a positive function in $\mathbb{R}_+$ that has a $(q+1)$-times monotone derivative, $q \in \mathbb{N}$ and $\boldsymbol{s}_1, \hdots, \boldsymbol{s}_p \in \mathbb{R}^{2q+1}$;
    \item[(ii)] $\boldsymbol{\alpha} = \alpha \boldsymbol{1}$;
    \item[(iii)] $\boldsymbol{\beta}$ is symmetric and conditionally negative semidefinite;
    \item[(iv)] $\boldsymbol{\gamma}$ is symmetric and conditionally negative semidefinite;
    \item[(v)] $(\alpha,\beta_{ij},\gamma_{ij}) \in \mathcal{P}_{d}$ for all $i,j$ in $[1,\hdots,p]$;
    \item[(vi)] $(\alpha+q+2,\beta+q+2,\gamma+q+2) \in \mathcal{P}_0$ for $q \in \mathbb{N}$;
    \item[(vii)] $\left[\frac{\rho_{ij} a_{ij}^d \Gamma(\beta_{ij}-\frac{d}{2}) \Gamma(\gamma_{ij}-\frac{d}{2})}{\Gamma(\beta_{ij}-\beta) \Gamma(\gamma_{ij}-\gamma)} \right]_{i,j=1}^p$ is symmetric and positive semidefinite;
\end{itemize}
\item [or]
\item [(4).] 
\begin{itemize}
    \item[(i)] $a_{ij} = a$ if $i \neq j$ and $a_{ii} = a - \delta_i$, with $0 \leq \delta_i < a$ for $i = 1,\hdots,p$;
    \item[(ii)] $\alpha_{ij} = \psi_2(\| \boldsymbol{t}_i - \boldsymbol{t}_j \|)$, with $\psi_2$ a function in $\mathbb{R}_+$ with values in $]0,\frac{2\gamma-1}{4}]$ and a $(q^{\prime}+1)$-times monotone derivative, $q^{\prime} \in \mathbb{N}$, $\gamma>\frac{1}{2}$ and $\boldsymbol{t}_1, \hdots, \boldsymbol{t}_p \in \mathbb{R}^{2q^{\prime}+1}$;
    \item[(iii)] $\boldsymbol{\beta} - \boldsymbol{\alpha} - \boldsymbol{1}$ is symmetric, conditionally negative semidefinite and with positive entries;
    \item[(iv)] $\boldsymbol{\gamma}$ is symmetric and conditionally negative semidefinite;
    \item[(v)] $\left[\frac{\rho_{ij}  a_{ij}^d \Gamma(\beta_{ij}-\frac{d}{2}) \Gamma(\gamma_{ij}-\frac{d}{2})}{\alpha_{ij} \Gamma(\alpha_{ij}-\frac{d}{2}) \Gamma(\beta_{ij}-\alpha_{ij}-1) \Gamma(\gamma_{ij}-\gamma)} \right]_{i,j=1}^p$ is symmetric and positive semidefinite;
\end{itemize}
\item [or]
\item [(5).] 
\begin{itemize}
    \item[(i)] $a_{ij}^2 = \psi_1(\| \boldsymbol{s}_i - \boldsymbol{s}_j \|)$, with $\psi_1$ a positive function that has a $(q+1)$-times monotone derivative, $q \in \mathbb{N}$ and $\boldsymbol{s}_1, \hdots, \boldsymbol{s}_p \in \mathbb{R}^{2q+1}$;
    \item[(ii)] $\alpha_{ij} = \psi_2(\| \boldsymbol{t}_i - \boldsymbol{t}_j \|)$, with $\psi_2$ a positive function in $\mathbb{R}_+$ with values in $]0,\frac{2\gamma-1}{4}]$ and a $(q^{\prime}+1)$-times monotone derivative, $q^{\prime} \in \mathbb{N}$, $\gamma>\frac{1}{2}$ and $\boldsymbol{t}_1, \hdots, \boldsymbol{t}_p \in \mathbb{R}^{2q^{\prime}+1}$;
    \item[(iii)] $\boldsymbol{\beta} - \boldsymbol{\alpha} - \boldsymbol{1}$ is symmetric, conditionally negative semidefinite and with positive entries;
    \item[(iv)] $\boldsymbol{\gamma}$ is symmetric and conditionally negative semidefinite;
    \item[(v)] $(\alpha_{ij}+q+3,\alpha_{ij}+q+4,\gamma+q+3) \in \mathcal{P}_0$ for all $i,j$ in $[1,\hdots,p]$;
    \item[(vi)] $\left[\frac{\rho_{ij}  a_{ij}^{d+2} \Gamma(\beta_{ij}-\frac{d}{2}) \Gamma(\gamma_{ij}-\frac{d}{2})}{\alpha_{ij} \Gamma(\alpha_{ij}-\frac{d}{2}) \Gamma(\beta_{ij}-\alpha_{ij}-1) \Gamma(\gamma_{ij}-\gamma)} \right]_{i,j=1}^p$ is symmetric and positive semidefinite.
\end{itemize}
\end{itemize}
The conditions derived by interchanging $\boldsymbol{\beta}$ and $\boldsymbol{\gamma}$ in (4) and (5) also lead to a valid covariance kernel.
\end{theorem}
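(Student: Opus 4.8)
The plan is to reduce the whole statement to a frequency-wise positive semidefiniteness check. By Cramér's theorem, the real symmetric matrix-valued kernel \eqref{hygeo13} is a valid covariance in $\mathbb{R}^d$ if and only if its matrix-valued spectral density $\widetilde{\boldsymbol{G}}_d(\boldsymbol{u}) = [\rho_{ij}\,\widetilde{G}_d(\boldsymbol{u}; a_{ij},\alpha_{ij},\beta_{ij},\gamma_{ij})]_{i,j=1}^p$ is positive semidefinite for almost every $\boldsymbol{u} \in \mathbb{R}^d$; the membership conditions of type $(\alpha,\beta_{ij},\gamma_{ij}) \in \mathcal{P}_d$ together with Theorem \ref{theorem1} guarantee that every entry is indeed the (nonnegative, integrable) spectral density \eqref{spectraldensity2} of a univariate member. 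I would thus verify, in each of the five cases, that $\widetilde{\boldsymbol{G}}_d(\boldsymbol{u})$ is positive semidefinite at every frequency by producing a nonnegative mixture representation in which the dependence on the index pair $(i,j)$ factorizes into Hadamard factors that are each positive semidefinite, and then invoking the Schur product theorem.

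For case (1), where $\boldsymbol{a}=a\boldsymbol{1}$ and $\boldsymbol{\alpha}=\alpha\boldsymbol{1}$ so that only $\beta_{ij},\gamma_{ij}$ vary, the first step is the beta-type identity that raises the two lower parameters of ${}_1F_2$ from a base pair $(\beta,\gamma)$ to $(\beta_{ij},\gamma_{ij})$,
\[
{}_1F_2(\alpha;\beta_{ij},\gamma_{ij};-z) = K_{ij}\int_0^1\!\!\int_0^1 u^{\beta-1}(1-u)^{\beta_{ij}-\beta-1}v^{\gamma-1}(1-v)^{\gamma_{ij}-\gamma-1}\,{}_1F_2(\alpha;\beta,\gamma;-zuv)\,\mathrm{d}u\,\mathrm{d}v,
\]
with $K_{ij}=\frac{\Gamma(\beta_{ij})\Gamma(\gamma_{ij})}{\Gamma(\beta)\Gamma(\gamma)\Gamma(\beta_{ij}-\beta)\Gamma(\gamma_{ij}-\gamma)}$, valid because condition (1)(vi) gives $\beta_{ij}>\beta$ and $\gamma_{ij}>\gamma$; this is checked term-by-term on the series \eqref{pFq} using the Euler beta integral. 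Substituting into \eqref{spectraldensity2} and multiplying by $\rho_{ij}$, the factors $\Gamma(\beta_{ij}),\Gamma(\gamma_{ij})$ cancel and the entry becomes a positive index-independent constant times $R_{ij}:=\frac{\rho_{ij}\Gamma(\beta_{ij}-\frac d2)\Gamma(\gamma_{ij}-\frac d2)}{\Gamma(\beta_{ij}-\beta)\Gamma(\gamma_{ij}-\gamma)}$ times the double integral. For fixed $u,v\in(0,1)$ the integrand matrix equals a nonnegative scalar, namely $u^{\beta-1}(1-u)^{-\beta-1}v^{\gamma-1}(1-v)^{-\gamma-1}\,{}_1F_2(\alpha;\beta,\gamma;-zuv)$ (nonnegative because $(\alpha,\beta,\gamma)\in\mathcal{P}_0$ by (1)(vi), so that ${}_1F_2(\alpha;\beta,\gamma;\cdot)$ is nonnegative on $\mathbb{R}_-$ by \cite{cho2020rational}), times the Hadamard product $[R_{ij}]\circ[(1-u)^{\beta_{ij}}]\circ[(1-v)^{\gamma_{ij}}]$. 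The first factor is positive semidefinite by (1)(vii); writing $(1-u)^{\beta_{ij}}=\exp\!\big(-(-\log(1-u))\,\beta_{ij}\big)$ with $-\log(1-u)>0$, the conditional negative semidefiniteness of $\boldsymbol{\beta}$ makes $[(1-u)^{\beta_{ij}}]$ positive semidefinite by Schoenberg's theorem, and likewise for $[(1-v)^{\gamma_{ij}}]$ using $\boldsymbol{\gamma}$. The Schur product theorem then makes the integrand positive semidefinite at every $(u,v)$, and integration against a nonnegative weight preserves this, proving case (1).

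Cases (2)--(5) follow the same template but require additional Hadamard factors because the range $a_{ij}$ (cases (2),(3)) and/or the smoothness $\alpha_{ij}$ (cases (4),(5)) now vary as well. For the smoothness I would keep the beta mixture \eqref{mixbessel} that introduces $\alpha$ through the factors $t^{\alpha_{ij}}$ and $(1-t)^{\beta_{ij}-\alpha_{ij}}$ \emph{without} merging them: Schoenberg's theorem applied to $[\alpha_{ij}]=[\psi_2(\|\boldsymbol{t}_i-\boldsymbol{t}_j\|)]$ and to $\boldsymbol{\beta}-\boldsymbol{\alpha}$ (which is conditionally negative semidefinite because $\boldsymbol{\beta}-\boldsymbol{\alpha}-\boldsymbol{1}$ is and adding the all-ones matrix $\boldsymbol{1}$ preserves this property) yields the two extra positive semidefinite factors, the bound $\alpha_{ij}\le\frac{2\gamma-1}{4}$ and the gamma ratios being absorbed into the positive semidefinite matrix of condition (4)(v)/(5)(vi). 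For the range I would pass to a scale-mixture form of the compactly supported kernel, using Williamson's representation of multiply monotone functions (Appendix \ref{appendixA}): the hypotheses $a_{ij}^2=\psi_1(\|\boldsymbol{s}_i-\boldsymbol{s}_j\|)$ with $\psi_1$ having a $(q+1)$-times monotone derivative and $\boldsymbol{s}_i\in\mathbb{R}^{2q+1}$, or the nested $\max$-structure of case (2), are exactly what make the resulting scale-dependent matrix positive semidefinite in the ambient dimension $2q+1$, while the shifted memberships $(\alpha+1,\beta+1,\gamma+1)\in\mathcal{P}_0$, $(\alpha+q+2,\beta+q+2,\gamma+q+2)\in\mathcal{P}_0$ and their analogues keep the mixing measures (derivatives of the ${}_1F_2$ after the requisite dimension shifts) nonnegative. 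In every case the argument closes by the Schur product theorem and integration of a nonnegative weight, and the $\boldsymbol{\beta}\leftrightarrow\boldsymbol{\gamma}$ symmetry of \eqref{spectraldensity2} supplies the interchanged variants at no extra cost.

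The main obstacle is the range variation. Unlike $\beta_{ij},\gamma_{ij},\alpha_{ij}$, which enter \eqref{spectraldensity2} only through gamma prefactors and through exponents that split off as Schoenberg factors, the range $a_{ij}$ sits inside the argument $(\pi a_{ij}\|\boldsymbol{u}\|)^2$ of the hypergeometric function and therefore does not factor out of the ${}_1F_2$; it must instead be absorbed through the multiply monotone / Williamson machinery in a way that remains compatible with the beta mixtures used for the other parameters. I expect the bulk of the work---and the reason for the technical Appendix \ref{appendixA} lemmas on conditionally negative semidefinite matrices and multiply monotone functions---to lie in showing that all the mixing measures so produced are genuinely nonnegative and that the several Hadamard factors can be assembled into a single frequency-wise positive semidefiniteness statement; tracking the exact parameter shifts ($+1$, $+q+2$, $+q+3$) required for this nonnegativity is the crux of the argument.
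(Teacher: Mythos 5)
Your treatment of case (1) is correct and is essentially the paper's own proof: Cram\'er's criterion, the Euler-beta identity raising $(\beta,\gamma)$ to $(\beta_{ij},\gamma_{ij})$, the nonnegativity of ${}_1F_2(\alpha;\beta,\gamma;\cdot)$ on $\mathbb{R}_-$ under $(\alpha,\beta,\gamma)\in\mathcal{P}_0$, Lemma \ref{superlemma} applied to $[(1-u)^{\beta_{ij}}]$ and $[(1-v)^{\gamma_{ij}}]$, and the Schur product theorem. For cases (2)--(5), however, what you give is a program rather than a proof, and the decisive mechanisms are left unestablished. Concretely: in case (2) you invoke ``the nested $\max$-structure'' without saying why it produces a positive semidefinite matrix; the actual argument is that condition (2)(vi) makes $x\mapsto{}_1F_2(\alpha;\beta,\gamma;x)$ increasing (its derivative is a positive multiple of ${}_1F_2(\alpha+1;\beta+1,\gamma+1;\cdot)$, nonnegative by $\mathcal{P}_0$-membership), so $[{}_1F_2(\alpha;\beta,\gamma;-t_1t_2(a_{ij}x)^2)]_{i,j}$ becomes a \emph{min}-matrix with positive entries plus a nonnegative diagonal perturbation (from $a_{ii}=\varepsilon_i-\delta_i$), and min-matrices are positive semidefinite. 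Without this monotonicity step the max-structure buys you nothing.

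In cases (3)--(5) the burden of the proof lies in Lemmas \ref{cov1F2a}, \ref{cov1F2g} and \ref{cov1F2z}, which you allude to but do not prove or even state precisely. For (3) one must show that $\varphi_1\circ\psi_1$ with $\varphi_1(x)={}_1F_2(\alpha;\beta,\gamma;-x)$ is $(q+2)$-times monotone, which requires computing $(-1)^k\varphi_1^{(k)}$ as a positive multiple of ${}_1F_2(\alpha+k;\beta+k,\gamma+k;-x)$ and using the shifted membership $(\alpha+q+2,\beta+q+2,\gamma+q+2)\in\mathcal{P}_0$; only then does Williamson's representation \eqref{multiplymonotone} deliver positive semidefiniteness in $\mathbb{R}^{2q+1}$. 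For (4) and (5) the nontrivial fact is the complete monotonicity of $\alpha\mapsto{}_1F_2(\alpha;\alpha+1,\gamma;-x)$ on $[0,\tfrac{2\gamma-1}{4}]$, obtained by term-by-term differentiation of \eqref{pFq} and recognition of the result as a nonnegative ${}_{k+1}F_{k+2}$ (a beta mixture of nonnegative ${}_1F_2$'s); your remark that the bound $\alpha_{ij}\le\tfrac{2\gamma-1}{4}$ is ``absorbed into the positive semidefinite matrix of condition (4)(v)/(5)(vi)'' misplaces its role --- it is what guarantees this monotonicity, not a normalization. Case (5) additionally needs the bivariate multiply monotone function $(x,\alpha)\mapsto x^{-1}{}_1F_2(\alpha;\alpha+1,\gamma;-x)$ of Lemma \ref{cov1F2z} (whence the exponent $d+2$ in (5)(vi)), which your sketch does not touch. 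So the architecture you propose is the right one, but cases (2)--(5) as written contain genuine gaps exactly where you predicted the crux would be.
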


\section{Specific bivariate compactly-supported hypergeometric covariance kernels}
\label{bivariate}

In addition to the general sufficient conditions established in Theorem \ref{theorem2aa}, one can obtain three specific bivariate kernels by satisfying the following determinantal inequality: 
\begin{equation*}
    \widetilde{G}_d({\boldsymbol u}; a_{11},\alpha_{11},\beta_{11},\gamma_{11}) \widetilde{G}_d({\boldsymbol u}; a_{22},\alpha_{22},\beta_{22},\gamma_{22}) \geq \rho_{12} \, \widetilde{G}_d^2({\boldsymbol u}; a_{12},\alpha_{12},\beta_{12},\gamma_{12}), \quad {\boldsymbol u} \in \mathbb{R}^d.
\end{equation*}

\begin{itemize}
    \item[(i)] 
    For $x>0, \alpha>0, \beta \in ]\alpha+\frac{1}{2},2\alpha]$, one has the following inequality \citep[Theorem 5.1]{cho2018}:
\begin{equation*}
    {}_1F_2\left(\alpha;\beta,3\alpha+\frac{1}{2}-\beta;-\frac{x^2}{4}\right) \geq \Gamma^2\left(\alpha+\frac{1}{2} \right) \left(\frac{x}{2}\right)^{1-2\alpha} J^2_{\alpha-\frac{1}{2}}\left(x\right).
\end{equation*}
This implies that a valid bivariate kernel can be obtained by putting:
$$ \boldsymbol{a} = \left[\begin{matrix}
a & a\\
a & a
\end{matrix}\right], \boldsymbol{\alpha} = \left[\begin{matrix}
\alpha & \alpha\\
\alpha & \alpha
\end{matrix}\right], \boldsymbol{\beta} = \left[\begin{matrix}
\beta_1 & \alpha+\frac{1}{2}\\
\alpha+\frac{1}{2} & \beta_2
\end{matrix}\right], \boldsymbol{\gamma} = \left[\begin{matrix}
3\alpha+\frac{1}{2}-\beta_1 & 2\alpha\\
2\alpha & 3\alpha+\frac{1}{2}-\beta_2
\end{matrix}\right], \boldsymbol{\rho} = \left[\begin{matrix}
1 & \rho\\
\rho & 1
\end{matrix}\right],$$
with $a>0$, $\alpha>\frac{d}{2}$, $\alpha+\frac{1}{2} < \beta_1 \leq 2\alpha$, $\alpha+\frac{1}{2} < \beta_2 \leq 2\alpha$ and
\begin{equation*}
\rho^2 \leq \frac{\Gamma^2(\alpha+\frac{1}{2}) \Gamma^2(2\alpha) \Gamma(\beta_1-\frac{d}{2}) \Gamma(\beta_2-\frac{d}{2}) \Gamma(3\alpha-\beta_1+\frac{1-d}{2}) \Gamma(3\alpha-\beta_2+\frac{1-d}{2})}{\Gamma^2(\alpha+\frac{1-d}{2}) \Gamma^2(2\alpha-\frac{d}{2}) \Gamma(\beta_1) \Gamma(\beta_2) \Gamma(3\alpha-\beta_1+\frac{1-d}{2}) \Gamma(3\alpha-\beta_2+\frac{1-d}{2})}.
\end{equation*}

\item[(ii)] The same line of reasoning applies with the inequality \citep[Theorem 5.2]{cho2018}:
\begin{equation*}
    {}_1F_2\left(\alpha;\beta,\alpha+\beta-\frac{1}{2};-\frac{x^2}{4}\right) \geq \Gamma^2\left(\beta \right) \left(\frac{x}{2}\right)^{2-2\beta} J^2_{\beta-1}\left(x\right), \quad x>0, \alpha>0, \beta \geq \alpha+\frac{1}{2}.
\end{equation*}
This implies the validity of the following kernel in $\mathbb{R}^d$:
$$ \boldsymbol{a} = \left[\begin{matrix}
a & a\\
a & a
\end{matrix}\right], \boldsymbol{\alpha} = \left[\begin{matrix}
\alpha_1 & \beta-\frac{1}{2}\\
\beta-\frac{1}{2} & \alpha_2
\end{matrix}\right], \boldsymbol{\beta} = \left[\begin{matrix}
\beta & \beta\\
\beta & \beta
\end{matrix}\right], \boldsymbol{\gamma} = \left[\begin{matrix}
\alpha_1+\beta-\frac{1}{2} & 2\beta-1\\
2\beta-1 & \alpha_2+\beta-\frac{1}{2}
\end{matrix}\right], \boldsymbol{\rho} = \left[\begin{matrix}
1 & \rho\\
\rho & 1
\end{matrix}\right],$$
with $a>0$, $\alpha_1>\frac{d}{2}$, $\alpha_2>\frac{d}{2}$, $\beta \geq \max\{\alpha_1,\alpha_2\}+\frac{1}{2}$ and 
$$
\rho^2\leq \frac{\Gamma(\alpha_1)\Gamma(\alpha_2) \Gamma(\alpha_1+\beta-\frac{d+1}{2})\Gamma(\alpha_2+\beta-\frac{d+1}{2}) \Gamma^2(\beta-\frac{d+1}{2}) \Gamma^2(2\beta-1)}{\Gamma(\alpha_1-\frac{d}{2})\Gamma(\alpha_2-\frac{d}{2}) \Gamma(\alpha_1+\beta-\frac{1}{2})\Gamma(\alpha_2+\beta-\frac{1}{2}) \Gamma^2(\beta-\frac{1}{2}) \Gamma^2(2\beta-1-\frac{d}{2})}.$$

\item[(iii)] Likewise, one has \citep[Theorem 5.3]{cho2018}:
\begin{equation*}
    {}_1F_2\left(\alpha;\beta,2\alpha;-\frac{x^2}{4}\right) \geq \Gamma^2\left(\beta \right) \left(\frac{x}{2}\right)^{2-2\beta} J^2_{\beta-1}\left(x\right), \quad x>0, \alpha>0, \beta \geq \alpha+\frac{1}{2}.
\end{equation*}
This implies the validity of the following kernel in $\mathbb{R}^d$:
$$ \boldsymbol{a} = \left[\begin{matrix}
a & a\\
a & a
\end{matrix}\right], \boldsymbol{\alpha} = \left[\begin{matrix}
\alpha_1 & \beta-\frac{1}{2}\\
\beta-\frac{1}{2} & \alpha_2
\end{matrix}\right], \boldsymbol{\beta} = \left[\begin{matrix}
\beta & \beta\\
\beta & \beta
\end{matrix}\right], \boldsymbol{\gamma} = \left[\begin{matrix}
2\alpha_1 & 2\beta-1\\
2\beta-1 & 2\alpha_2
\end{matrix}\right], \boldsymbol{\rho} = \left[\begin{matrix}
1 & \rho\\
\rho & 1
\end{matrix}\right],$$
with $a>0$, $\alpha_1>\frac{d}{2}$, $\alpha_2>\frac{d}{2}$, $\beta \geq \max\{\alpha_1,\alpha_2\}+\frac{1}{2}$ and 
$$
\rho^2\leq \frac{\Gamma(\alpha_1)\Gamma(\alpha_2) \Gamma(2\alpha_1-\frac{d}{2})\Gamma(2\alpha_2-\frac{d}{2}) \Gamma^2(\beta-\frac{d+1}{2}) \Gamma^2(2\beta-1)}{\Gamma(\alpha_1-\frac{d}{2})\Gamma(\alpha_2-\frac{d}{2}) \Gamma(2\alpha_1)\Gamma(2\alpha_2) \Gamma^2(\beta-\frac{1}{2}) \Gamma^2(2\beta-1-\frac{d}{2})}.$$
\end{itemize}

These kernels escape from the cases presented in Theorem \ref{theorem2aa}, insofar as $\boldsymbol{\beta}$ and $\boldsymbol{\gamma}$ are not conditionally semidefinite negative in kernel (i), $\boldsymbol{\alpha}$ is not proportional to the all-ones matrix in kernels (ii) and (iii), and $\boldsymbol{\beta}-\boldsymbol{\alpha}$ is not conditionally semidefinite negative in all three kernels. Interestingly, if $\alpha$ (kernel (i)) or $\beta$ (kernels (ii) and (iii)) is an integer or a half-integer, the cross-covariances (off-diagonal entries of $G_d$) are univariate spherical kernels, but the direct covariances (diagonal entries of $G_d$) are not, unless $\beta_1=\beta_2=2\alpha$ or $\alpha_1=\alpha_2=\beta-\frac{1}{2}$, respectively.

\section{Concluding remarks}
\label{conclusions}

The class of Gauss hypergeometric covariance kernels presented in this work includes the stationary univariate kernels that are most widely used in spatial statistics: spherical, Askey, generalized Wendland and, as asymptotic cases, Mat\'ern and Gaussian. Figure~\ref{fig:parameterspace} maps these kernels in the parameter space $\mathcal{P}_d$. 
Concerning multivariate covariance kernels, under Conditions (1) of Theorem \ref{theorem2aa},  $\boldsymbol{a}$ is proportional to the all-ones matrix, i.e., all the direct and cross-covariances share the same range. In contrast, Conditions (2) and (3) allow different ranges, at the price of additional restrictions on the shape parameters $\boldsymbol{\alpha}$, $\boldsymbol{\beta}$ and $\boldsymbol{\gamma}$ that exclude a few covariance kernels located on the boundary of the parameter space $\mathcal{P}_d$, such as the spherical and Askey kernels. Even more interesting, Conditions (4) and (5) allow both $\boldsymbol{a}$ and $\boldsymbol{\alpha}$ not to be proportional to the all-ones matrix, i.e., the direct and cross-covariances not to share the same range nor the same behavior at the origin. This versatility makes the proposed multivariate Gauss hypergeometric covariance kernel a compactly-supported competitor of the well-known multivariate Mat\'ern kernel \citep{Apanasovich}.

\begin{figure}
\centering
\includegraphics[width=0.7\textwidth]{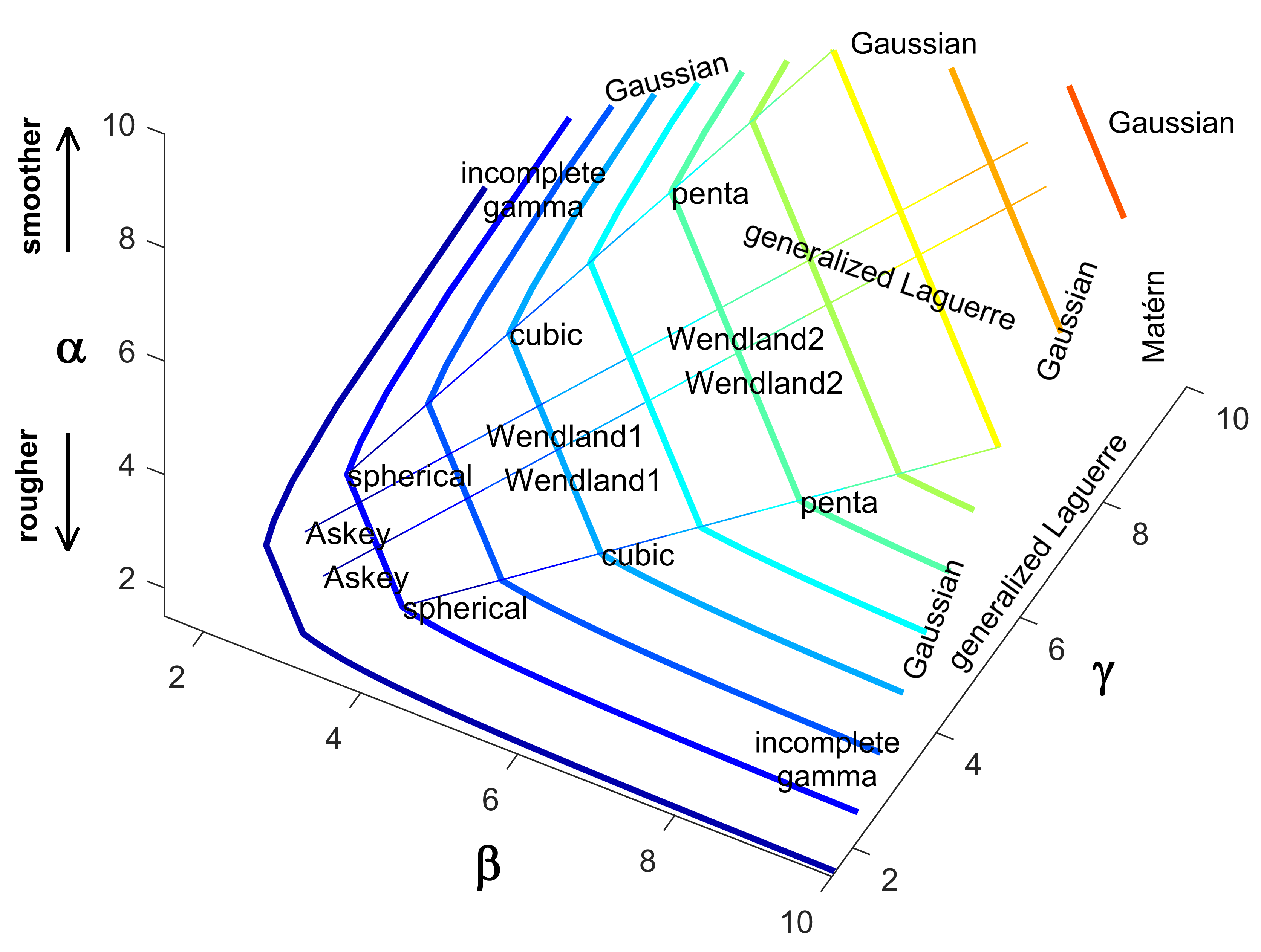}
\caption{Positioning of common covariance kernels in the parameter space $\mathcal{P}_d$ (with, here, $d=3$) of the hypergeometric covariance. The colored lines represent the upper boundary of $\mathcal{P}_3$, the color being a function of $\alpha$, from the lowest (blue) to the highest (red) values; the thin lines correspond to the spherical and Askey-Wendland families. The lower boundary of $\mathcal{P}_3$ is the white plane $\alpha=\frac{3}{2}$. The greater $\alpha$, the more regular the hypergeometric covariance at the origin.}
\label{fig:parameterspace}
\end{figure}

\begin{appendix}
\section*{Appendices}

\section{Technical definitions and lemmas}
\label{appendixA}

\begin{definition}[Mont\'ee and descente]
For $k \in \mathbb{N}$, $k<d$, the transitive upgrading or mont\'ee of order $k$ is the operator $\mathfrak{M}_k$ that transforms an isotropic covariance in $\mathbb{R}^d$ into a an isotropic covariance in $\mathbb{R}^{d-k}$ with the same radial spectral density \citep{matheron1965variables}. The reciprocal operator is the transitive downgrading (descente) of order $k$ and is denoted as $\mathfrak{M}_{-k}$.
\end{definition}

\begin{definition}[Conditionally negative semidefinite matrix]
\label{defvariog}
A $p \times p$ symmetric real-valued matrix $\boldsymbol{A}$ is conditionally negative semidefinite if, for any vector $\boldsymbol{\omega}$ in $\mathbb{R}^p$ whose components add to zero, one has $\boldsymbol{\omega}^{\top} \, \boldsymbol{A} \boldsymbol{\omega} \leq 0$.
\end{definition}

\begin{example}
\label{variogex}
Examples of conditionally negative semidefinite matrices include the all-ones matrix $\boldsymbol 1$ or the matrix $\boldsymbol A=[a_{ij}]_{i,j=1}^p$ with 
$$
a_{ij} = \frac{\eta_{i}+\eta_{j}}{2} + \psi(\boldsymbol{s}_i,\boldsymbol{s}_j),
$$
for any $\eta_{1}, \ldots, \eta_{p}$ in $\mathbb{R}$, $\boldsymbol{s}_1, \ldots, \boldsymbol{s}_p$ in $\mathbb{R}^d$, and variogram $\psi$ on $\mathbb{R}^d \times \mathbb{R}^d$ \citep{matheron1965variables, chiles2009geostatistics}. Also, the set of conditionally negative semidefinite matrices is a closed convex cone, so that the product of a conditionally negative semidefinite matrix with a nonnegative constant, the sum of two conditionally negative semidefinite matrices, or the limit of a convergent sequence of conditionally negative semidefinite matrices are still conditionally negative semidefinite.
\end{example}

\begin{lemma}[\cite{Berg}]
\label{superlemma}
A symmetric real-valued matrix $\boldsymbol{A}=[a_{ij}]_{i,j=1}^p$ is conditionally negative semidefinite if and only if $[\exp(- t \, a_{ij})]_{i,j=1}^p$ is positive semidefinite for all $t \geq 0$. 
\end{lemma}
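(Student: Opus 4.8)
The plan is to prove both implications, the reverse one being a short first-order expansion and the forward one requiring a Euclidean embedding. For the reverse direction, assume $[\exp(-t a_{ij})]_{i,j=1}^p$ is positive semidefinite for every $t \ge 0$ and fix $\boldsymbol{\omega} \in \mathbb{R}^p$ with $\sum_i \omega_i = 0$. Writing $\exp(-t a_{ij}) = 1 - t a_{ij} + O(t^2)$, the quadratic form $\sum_{i,j} \omega_i \omega_j \exp(-t a_{ij})$ equals $(\sum_i \omega_i)^2 - t\,\boldsymbol{\omega}^\top \boldsymbol{A}\boldsymbol{\omega} + O(t^2)$; the first term vanishes, so dividing the nonnegative quantity by $t>0$ and letting $t \to 0^+$ yields $-\boldsymbol{\omega}^\top \boldsymbol{A}\boldsymbol{\omega} \ge 0$, i.e. $\boldsymbol{A}$ is conditionally negative semidefinite.

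For the forward direction, suppose $\boldsymbol{A}$ is conditionally negative semidefinite. First I would normalize the diagonal by setting $b_{ij} = a_{ij} - \tfrac{1}{2}(a_{ii}+a_{jj})$. Since any zero-sum vector annihilates the rank-two correction $\tfrac12(a_{ii}+a_{jj})$, the matrix $\boldsymbol{B}=[b_{ij}]$ is still conditionally negative semidefinite, and it has vanishing diagonal. Moreover $[\exp(-t a_{ij})] = \boldsymbol{D}\,[\exp(-t b_{ij})]\,\boldsymbol{D}$ with $\boldsymbol{D} = \mathrm{diag}(\exp(-\tfrac{t}{2}a_{ii}))$ a positive diagonal matrix, so positive semidefiniteness of the two exponential matrices is equivalent. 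Hence I may assume $a_{ii}=0$ for all $i$.

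With zero diagonal, the key step is to realize $\boldsymbol{A}$ as a squared-distance matrix. Fixing the reference index $p$, define $g_{ij} = \tfrac{1}{2}(a_{ip}+a_{pj}-a_{ij})$. Testing $\boldsymbol{G}=[g_{ij}]$ against an arbitrary $\boldsymbol{x}\in\mathbb{R}^p$ and applying the conditional inequality to the zero-sum vector $\boldsymbol{\omega} = \boldsymbol{x} - (\sum_k x_k)\,\boldsymbol{e}_p$ shows $\boldsymbol{x}^\top \boldsymbol{G}\boldsymbol{x} \ge 0$, so $\boldsymbol{G}$ is positive semidefinite and factors as $g_{ij} = \langle \boldsymbol{v}_i, \boldsymbol{v}_j\rangle$ for some $\boldsymbol{v}_1,\dots,\boldsymbol{v}_p$ in a Euclidean space. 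A direct computation using $g_{ii}=a_{ip}$ gives $a_{ij} = g_{ii}+g_{jj}-2g_{ij} = \|\boldsymbol{v}_i - \boldsymbol{v}_j\|^2$.

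It then remains to check that the Gaussian Gram matrix $[\exp(-t\|\boldsymbol{v}_i-\boldsymbol{v}_j\|^2)]$ is positive semidefinite. Factoring out the positive diagonal $\exp(-t\|\boldsymbol{v}_i\|^2)$ reduces this to $[\exp(2t\langle \boldsymbol{v}_i,\boldsymbol{v}_j\rangle)]$, which I would expand as the convergent series $\sum_{k\ge0}\frac{(2t)^k}{k!}[\langle \boldsymbol{v}_i,\boldsymbol{v}_j\rangle^k]$; each Hadamard power of the positive semidefinite Gram matrix $[\langle \boldsymbol{v}_i,\boldsymbol{v}_j\rangle]$ is positive semidefinite by the Schur product theorem, so the whole series is, completing the proof. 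I expect the embedding step --- producing $\boldsymbol{G}$ and verifying its positive semidefiniteness from the conditional inequality --- to be the main obstacle; the final Gaussian step could alternatively be dispatched by Bochner's theorem, since the Fourier transform of a Gaussian is nonnegative.
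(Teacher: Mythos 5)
Your proof is correct and complete. Note that the paper does not prove this lemma at all --- it simply imports it from the cited reference (it is Schoenberg's classical theorem, Theorem 3.2.2 in Berg, Christensen and Ressel, \emph{Harmonic Analysis on Semigroups}), so there is no in-paper argument to compare against; what you have written is essentially the standard proof from that reference. All the key steps check out: the first-order expansion for the reverse implication is fine since $(\sum_i\omega_i)^2=0$ kills the constant term; the diagonal normalization $b_{ij}=a_{ij}-\tfrac12(a_{ii}+a_{jj})$ preserves conditional negative semidefiniteness and changes the exponential matrix only by a congruence with a positive diagonal matrix; and the verification that $\boldsymbol{G}$ is positive semidefinite is exactly the identity $\boldsymbol{x}^\top\boldsymbol{G}\boldsymbol{x}=-\tfrac12\,\boldsymbol{\omega}^\top\boldsymbol{A}\boldsymbol{\omega}$ with $\boldsymbol{\omega}=\boldsymbol{x}-(\sum_k x_k)\boldsymbol{e}_p$, which you correctly identify (it would be worth displaying that one-line computation explicitly, as it is the only step a reader might not immediately see). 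The final step, writing $[\exp(2t\langle\boldsymbol{v}_i,\boldsymbol{v}_j\rangle)]$ as a convergent series of Hadamard powers and invoking the Schur product theorem, is valid because $t\geq 0$ makes all the series coefficients nonnegative; your alternative via Bochner's theorem and the Gaussian Fourier transform would work equally well and is in the spirit of how the paper uses Gaussian kernels elsewhere.
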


\begin{definition}[Multiply monotone function]
For $q \in \mathbb{N}$, a $q$-times differentiable function $\varphi$ on $\mathbb{R}_+$ is $(q+2)$-times monotone if $(-1)^k \varphi^{(k)}$ is nonnegative, nonincreasing and convex for $k=0, \hdots,q$. A $1$-time monotone function is a nonnegative and nonincreasing function on $\mathbb{R}_+$ \citep{Williamson}.
\end{definition}

\begin{lemma}[\cite{Williamson}]
A $(q+2)$-times monotone function, $q \geq -1$, admits the expression 
\begin{equation}
\label{multiplymonotone}
\varphi(x) =  \int_0^{+\infty} (1 - t \, x)_+^{q+1} \,\nu (\textnormal{d}t), \qquad x \in \mathbb{R}_+, 
\end{equation}
where $\nu$ is a nonnegative measure.
\end{lemma}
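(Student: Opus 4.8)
The plan is to establish the asserted integral representation by induction on $q \ge -1$; the claim is precisely that a $(q+2)$-times monotone $\varphi$ \emph{admits} a representing nonnegative measure $\nu$, so the content lies in constructing $\nu$ (the converse implication, that any such integral is $(q+2)$-times monotone, is the easy direction and follows by differentiating under the integral sign, using $\frac{\mathrm d^k}{\mathrm dx^k}(1-tx)_+^{q+1}=(-1)^k\frac{(q+1)!}{(q+1-k)!}\,t^k(1-tx)_+^{q+1-k}$ to read off nonnegativity, monotonicity and convexity for $k=0,\dots,q$). The base case $q=-1$ asks to write a nonnegative nonincreasing $\varphi$ as $\varphi(x)=\int_0^{+\infty}(1-tx)_+^{0}\,\nu(\mathrm dt)=\nu\bigl([0,1/x)\bigr)$: here I would set $G(s)=\varphi(1/s)$, which is nondecreasing, take $\nu$ to be the Lebesgue--Stieltjes measure of $G$ with $\nu(\{0\})=\lim_{x\to+\infty}\varphi(x)$, matching $\varphi$ up to the standard convention at the boundary $tx=1$.

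For the inductive step, assume the representation holds for $q-1$ and let $\varphi$ be $(q+2)$-times monotone with $q\ge 0$. The key observation is that $\psi:=-\varphi'$ is $(q+1)$-times monotone: since $(-1)^k\psi^{(k)}=(-1)^{k+1}\varphi^{(k+1)}$ is nonnegative, nonincreasing and convex for $k=0,\dots,q-1$ by hypothesis (and at the bottom level one uses the right derivative $\varphi'_+$, which exists everywhere because $(-1)^q\varphi^{(q)}$ is convex). The induction hypothesis then furnishes a nonnegative measure $\sigma$ with $\psi(x)=\int_0^{+\infty}(1-tx)_+^{q}\,\sigma(\mathrm dt)$.

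It remains to integrate from infinity. Writing $c:=\lim_{x\to+\infty}\varphi(x)\ge 0$ and using the fundamental theorem of calculus in the form $\varphi(x)=c+\int_x^{+\infty}\psi(u)\,\mathrm du$, I would move the outer integral inside by Tonelli (all integrands being nonnegative) and invoke the elementary identity $\int_x^{+\infty}(1-tu)_+^{q}\,\mathrm du=\frac{1}{(q+1)\,t}(1-tx)_+^{q+1}$ to obtain $\varphi(x)=c+\int_0^{+\infty}(1-tx)_+^{q+1}\,\frac{1}{(q+1)\,t}\,\sigma(\mathrm dt)$. Setting $\nu=c\,\delta_0+\frac{1}{(q+1)\,t}\,\sigma$, the point mass at the origin reproduces the constant $c$ because $(1-0)_+^{q+1}=1$, which closes the induction and yields the representation with a nonnegative $\nu$.

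The step I expect to be the main obstacle is the bookkeeping at infinity that legitimizes this integration. One must check that $c=\lim_{x\to+\infty}\varphi(x)$ is finite (immediate from $\varphi\ge 0$ nonincreasing) and, more delicately, that $\psi=-\varphi'$ is integrable near infinity so that $\int_x^{+\infty}\psi$ converges to $\varphi(x)-c$; this hinges on the fact that a convex function on $\mathbb{R}_+$ is locally Lipschitz, hence absolutely continuous, so the fundamental theorem applies even though $\varphi^{(q)}$ is only one-sided differentiable. The Tonelli interchange must likewise be justified, and one should verify that each successive derivative inherits the correct vanishing at infinity so that the iterated construction is consistent. An alternative that sidesteps the explicit measure is a Choquet-type argument: the cone of $(q+2)$-times monotone functions is closed and convex, and its extreme rays are exactly the kernels $x\mapsto(1-tx)_+^{q+1}$, whence the representation follows by the integral representation of points of a compact convex set; the inductive route above is preferable, however, since it exhibits $\nu$ concretely.
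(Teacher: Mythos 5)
The paper does not prove this lemma at all: it is quoted as an external result from Williamson (1956), so there is no in-paper argument to compare against. Your inductive construction is, in substance, Williamson's original proof (reduce to $\psi=-\varphi'$, which drops the monotonicity order by one; represent $\psi$ by the induction hypothesis; integrate back from infinity using $\int_x^{+\infty}(1-tu)_+^{q}\,\mathrm du=\tfrac{1}{(q+1)t}(1-tx)_+^{q+1}$ and absorb the limit $c$ into an atom of $\nu$ at $t=0$), and the steps you flag as delicate are handled correctly: finiteness of $c$, absolute continuity of the convex function $\varphi$ so that $\varphi(x)-c=\int_x^{+\infty}\psi$, Tonelli for the interchange, and the observation that $\sigma(\{0\})=0$ is forced by integrability of $\psi$ at infinity. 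Two small points are worth making explicit. First, in the base case $q=-1$ an arbitrary nonnegative nonincreasing $\varphi$ coincides with $\nu([0,1/x))$ only outside its (at most countable) set of discontinuities — you acknowledge this — but this ambiguity is harmless precisely because in the inductive step $\psi$ is only ever integrated, so the final representation for $q\ge 0$ is exact everywhere; stating that explicitly would close the loop. Second, the representation at $x=0$ requires $\nu$ to be a finite measure, which follows from $\varphi(0)<+\infty$ and monotone convergence as $x\downarrow 0$; this is worth a sentence since the paper's definition takes $\varphi$ on all of $\mathbb{R}_+$. With those remarks, the proof is sound and self-contained, which is more than the paper offers for this statement.
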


\begin{example}
Examples of $(q+2)$-times monotone functions include the truncated power function $x \mapsto b+(1 - \frac{x}{a})_+^{\eta}$ with $a>0$, $b \geq 0$ and $\eta \geq q+1$, the completely monotone functions, and positive mixtures and products of such functions.  
\end{example}

\begin{lemma}
\label{cov1F2a}
Let $q \in \mathbb{N}$, $\alpha, \beta$, $\gamma \in \mathbb{R}_+^*$ and $\psi_1$ a positive function in $\mathbb{R}_+$ whose derivative is $(q+1)$-times monotone. Then, the function $\Phi_1: \mathbb{R}^{2q+1} \to \mathbb{R}$ defined by
\begin{equation}
    \label{covalemma1a}
    \Phi_1({\boldsymbol x}) =  {}_1 F_2\left( \alpha;\beta,\gamma; -\psi_1(\| {\boldsymbol x} \|) \right), \quad {\boldsymbol x} \in \mathbb{R}^{2q+1},
\end{equation}
is a stationary isotropic covariance kernel in $\mathbb{R}^{2q+1}$ if $(\alpha+q+2,\beta+q+2,\gamma+q+2) \in \mathcal{P}_0$.
\end{lemma}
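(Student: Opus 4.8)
The plan is to realise $\Phi_1$ as a nonnegative mixture of radial functions that are manifestly positive semidefinite in $\mathbb{R}^{2q+1}$, which suffices since the cone of positive semidefinite kernels is closed under nonnegative mixtures. I would proceed in three stages: first show that the univariate profile $F(s):={}_1F_2(\alpha;\beta,\gamma;-s)$ is $(q+2)$-times monotone on $\mathbb{R}_+$; then apply Williamson's representation to write $F$ as a mixture of truncated powers; and finally show that each truncated power, composed with $\psi_1$, is positive semidefinite in $\mathbb{R}^{2q+1}$.

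For the first stage I would differentiate the power series \eqref{pFq} term by term to obtain the contiguous relation $\frac{\mathrm{d}^k}{\mathrm{d}s^k}{}_1F_2(\alpha;\beta,\gamma;-s)=(-1)^k\frac{(\alpha)_k}{(\beta)_k(\gamma)_k}{}_1F_2(\alpha+k;\beta+k,\gamma+k;-s)$, so that $(-1)^kF^{(k)}(s)=\frac{(\alpha)_k}{(\beta)_k(\gamma)_k}{}_1F_2(\alpha+k;\beta+k,\gamma+k;-s)$ with a positive Pochhammer prefactor. By Cho's nonnegativity criterion (the defining conditions of $\mathcal{P}_0$), this is nonnegative as soon as $(\alpha+k,\beta+k,\gamma+k)\in\mathcal{P}_0$. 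A short check shows the two inequalities defining $\mathcal{P}_0$ are monotone under a uniform positive shift: at level $k$ they read $2(\beta-\alpha)(\gamma-\alpha)\geq\alpha+k$ and $2(\beta+\gamma)\geq6\alpha+2k+1$, both implied for every $0\le k\le q+2$ by their counterparts at level $q+2$. Hence the hypothesis $(\alpha+q+2,\beta+q+2,\gamma+q+2)\in\mathcal{P}_0$ gives $(-1)^kF^{(k)}\ge0$ on $\mathbb{R}_+$ for $k=0,\dots,q+2$, i.e.\ $F$ is $(q+2)$-times monotone. Williamson's lemma then furnishes a nonnegative measure $\nu$ with $F(s)=\int_0^{+\infty}(1-ts)_+^{q+1}\,\nu(\mathrm{d}t)$, whence $\Phi_1(\boldsymbol{x})=\int_0^{+\infty}\bigl(1-t\,\psi_1(\|\boldsymbol{x}\|)\bigr)_+^{q+1}\,\nu(\mathrm{d}t)$.

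It then remains to prove that, for each fixed $t\ge0$, the radial function $\boldsymbol{x}\mapsto\theta_t(\|\boldsymbol{x}\|):=\bigl(1-t\,\psi_1(\|\boldsymbol{x}\|)\bigr)_+^{q+1}$ is positive semidefinite in $\mathbb{R}^{2q+1}$. For this I would invoke the classical criterion that a continuous radial profile which is $\lfloor d/2\rfloor+1$ times monotone is positive semidefinite in $\mathbb{R}^d$, noting $\lfloor(2q+1)/2\rfloor+1=q+1$; so it suffices to show $\theta_t$ is $(q+1)$-times monotone in $r$. The outer map $u\mapsto(1-tu)_+^{q+1}$ has $m$-th derivative of sign $(-1)^m$ for $0\le m\le q+1$, while $\psi_1'$ being $(q+1)$-times monotone forces $\psi_1^{(j)}$ to have sign $(-1)^{j-1}$ for $1\le j\le q+1$. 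In a Faà di Bruno expansion of $\theta_t^{(n)}$, a term indexed by a partition with $\sum_j b_j=m$ and $\sum_j j\,b_j=n$ carries sign $(-1)^m\cdot(-1)^{\sum_j(j-1)b_j}=(-1)^m(-1)^{n-m}=(-1)^n$, so every term shares the sign $(-1)^n$ and $(-1)^n\theta_t^{(n)}\ge0$ for $n=0,\dots,q+1$.

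The main obstacle is this last composition step. Two points require care: the differentiability bookkeeping — the hypothesis only makes $\psi_1$ a $q$-times differentiable function, so the convexity requirement at the top order must be read in the convex-function/distributional sense (for instance by approximating $\psi_1$ by smooth functions with the same sign pattern of derivatives and passing to the limit inside the closed cone of positive semidefinite kernels) — and the uniform sign count across all partitions, which hinges precisely on the identities $\sum_j b_j=m$ and $\sum_j j\,b_j=n$. Alternatively, this stage can be quoted from the existing literature on compactly supported correlations generated by functions with multiply monotone derivatives. Combining the three stages, $\Phi_1$ is a nonnegative mixture of positive semidefinite radial kernels, and is therefore a stationary isotropic covariance kernel in $\mathbb{R}^{2q+1}$.
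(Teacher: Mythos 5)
Your first stage is correct and coincides exactly with the paper's argument: term-by-term differentiation gives $(-1)^k F^{(k)}(s)=\frac{(\alpha)_k}{(\beta)_k(\gamma)_k}\,{}_1F_2(\alpha+k;\beta+k,\gamma+k;-s)$, and the two inequalities defining $\mathcal{P}_0$ are indeed implied at every level $k\le q+2$ by their counterparts at level $q+2$, so $F$ is $(q+2)$-times monotone. Your subsequent route (Williamson decomposition of $F$ first, then composition with $\psi_1$ inside each truncated power) is a legitimate variant of the paper's more direct argument, which simply applies the composition rule \citep[proposition 4.5]{Gneiting1999} to $F\circ\psi_1$ as a whole: an $m$-times monotone outer function composed with a positive function whose derivative is $(m-1)$-times monotone is again $m$-times monotone, and then invokes the Williamson--Askey criterion once.

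The gap is in your third stage, and it is an off-by-one error that, as written, breaks the conclusion. The criterion you invoke is misstated: via Williamson, an $m$-times monotone continuous profile is a mixture of $(1-tr)_+^{m-1}$, and the Askey kernel $(1-tr)_+^{\lambda}$ is positive semidefinite in $\mathbb{R}^d$ for $\lambda\ge\frac{d+1}{2}$; hence positive semidefiniteness in $\mathbb{R}^{2q+1}$ requires $m-1\ge q+1$, i.e.\ $(q+2)$-times monotonicity, not the $\bigl(\lfloor d/2\rfloor+1\bigr)=(q+1)$-times monotonicity you target. The case $q=0$, $d=1$ makes the failure concrete: $1$-time monotone means only nonnegative and nonincreasing, which does not imply positive semidefiniteness on $\mathbb{R}$ --- P\'olya's criterion additionally demands convexity, which is exactly the $(q+2)$-nd order you omit. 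Correspondingly, your Fa\`a di Bruno sign count stops at $n=q+1$, one order short of what is needed (the top order being read as convexity of $(-1)^{q}\theta_t^{(q)}$, since $\psi_1$ is only guaranteed $q$ times differentiable). The repair is available and is precisely the paper's step: the outer map $u\mapsto(1-tu)_+^{q+1}$ is itself $(q+2)$-times monotone, and since $\psi_1'$ is $(q+1)$-times monotone, the composition rule of \cite{Gneiting1999} yields that $\theta_t$ is $(q+2)$-times monotone --- at which point your Williamson decomposition of $F$ becomes redundant, because the same rule applied directly to $F\circ\psi_1$ finishes the proof in one step.
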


\begin{example}
Examples of functions $\psi_1$ satisfying the conditions of Lemma \ref{cov1F2a} include the integrated truncated power function $\psi_1(x)= b \, x + c-(1 - \frac{x}{a})_+^{\eta+1}$ ($a>0$, $b>0$, $c > 1$ and $\eta \geq q$) and the Bernstein functions (positive primitives of completely monotone functions), e.g. \citep{slgsogvdk10}:
\begin{itemize}
    \item $\psi_1(x) = 1+\log\left(1+\frac{x}{b}\right)$ with $b > 0$;
    \item $\psi_1(x) = \left(1+b \, x^\eta \right)^\theta$ with $b > 0$, $\eta \in ]0,1]$ and $\theta \in ]0,1]$;
    \item $\psi_1(x) = 1+x \, (x+b)^{-\eta}$ with $b > 0$ and $\eta \in ]0,1]$.
\end{itemize}

\end{example}

\begin{lemma}
\label{cov1F2g}
Let $q^{\prime} \in \mathbb{N}$, $\gamma > 0$, $x > 0$ and $\psi_2$ a positive function in $\mathbb{R}_+$ upper bounded by $\alpha_{\max}=\frac{2\gamma-1}{4}$ and whose derivative is $(q^{\prime}+1)$-times monotone. Then, the function $\Phi_2: \mathbb{R}^{2q^{\prime}+1} \to \mathbb{R}$ defined by
\begin{equation}
    \label{covalemma1g}
    \Phi_2(\boldsymbol{y}) =  {}_1 F_2\left(\psi_2(\| \boldsymbol{y}\|);\psi_2(\| \boldsymbol{y}\|)+1,\gamma;-x \right), \quad \boldsymbol{y} \in \mathbb{R}^{2q^{\prime}+1},
\end{equation}
is a stationary isotropic covariance kernel in $\mathbb{R}^{2q^{\prime}+1}$.
\end{lemma}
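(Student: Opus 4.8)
The plan is to build $\Phi_2$ out of the single elementary kernel $\boldsymbol{y}\mapsto e^{-s\,\psi_2(\|\boldsymbol{y}\|)}$, $s\ge 0$, and to exhibit it as a combination of translates of this kernel. First I would specialize the beta--Bessel mixture \eqref{mixbessel} to the case $\beta=\alpha+1$, where $(1-t)^{\beta-\alpha-1}=1$ and $\Gamma(\beta)/[\Gamma(\alpha)\Gamma(\beta-\alpha)]=\alpha$, which gives
\begin{equation*}
{}_1F_2(\alpha;\alpha+1,\gamma;-x)=\alpha\int_0^1 t^{\alpha-1}\,{}_0F_1(;\gamma;-tx)\,\mathrm{d}t .
\end{equation*}
Setting $\alpha=\psi_2(\|\boldsymbol{y}\|)$ and using $\alpha t^{\alpha-1}=\partial_t(t^{\alpha})$, this becomes
\begin{equation*}
\Phi_2(\boldsymbol{y})=\int_0^1 {}_0F_1(;\gamma;-tx)\,\frac{\partial}{\partial t}\,t^{\psi_2(\|\boldsymbol{y}\|)}\,\mathrm{d}t .
\end{equation*}
Since $t^{\psi_2(\|\boldsymbol{y}\|)}=e^{-(-\log t)\psi_2(\|\boldsymbol{y}\|)}$ with $-\log t\ge 0$ on $(0,1]$, the object whose positive definiteness I would exploit is exactly $e^{-s\,\psi_2(\|\boldsymbol{y}\|)}$; the fixed but sign-changing weight ${}_0F_1(;\gamma;-tx)$ is the source of difficulty.

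Second, I would prove that $\boldsymbol{y}\mapsto e^{-s\,\psi_2(\|\boldsymbol{y}\|)}$ is positive semidefinite on $\mathbb{R}^{2q^{\prime}+1}$ for every $s\ge 0$, equivalently (by Lemma \ref{superlemma}) that $[\psi_2(\|\boldsymbol{s}_i-\boldsymbol{s}_j\|)]_{i,j}$ is conditionally negative semidefinite on $\mathbb{R}^{2q^{\prime}+1}$. As $\psi_2^{\prime}$ is $(q^{\prime}+1)$-times monotone, Williamson's representation \eqref{multiplymonotone} yields $\psi_2^{\prime}(x)=\int_0^{\infty}(1-tx)_+^{q^{\prime}}\nu(\mathrm{d}t)$ with $\nu\ge 0$, whence, after integration,
\begin{equation*}
\psi_2(x)=\psi_2(0)+\int_0^{\infty}\frac{1-(1-tx)_+^{q^{\prime}+1}}{(q^{\prime}+1)\,t}\,\nu(\mathrm{d}t).
\end{equation*}
Because $q^{\prime}+1\ge\tfrac{(2q^{\prime}+1)+1}{2}$, the Askey truncated power $(1-t\|\cdot\|)_+^{q^{\prime}+1}$ is positive semidefinite on $\mathbb{R}^{2q^{\prime}+1}$ \citep{Askey}, so each $1-(1-t\|\cdot\|)_+^{q^{\prime}+1}$ is conditionally negative semidefinite; a constant plus a nonnegative mixture of such kernels is again conditionally negative semidefinite, so $\psi_2(\|\cdot\|)$ is, and the elementary kernels are positive semidefinite as claimed.

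Third --- and this is the crux --- one cannot conclude by a naive nonnegative mixture, because ${}_0F_1(;\gamma;-tx)$ oscillates in sign once $tx$ is large, so the effective mixing weight is not pointwise nonnegative and the representation above is not manifestly a positive superposition. I would therefore test positive definiteness directly. Given weights $\lambda_1,\dots,\lambda_m$ and nodes $\boldsymbol{y}_1,\dots,\boldsymbol{y}_m$, set $P(t)=\sum_{i,j}\lambda_i\lambda_j\,t^{\psi_2(\|\boldsymbol{y}_i-\boldsymbol{y}_j\|)}$, which by Step~2 is nonnegative on $(0,1]$ with $P(0^+)=0$; then, using $\tfrac{\mathrm{d}}{\mathrm{d}z}{}_0F_1(;\gamma;z)=\tfrac1\gamma{}_0F_1(;\gamma+1;z)$ and one integration by parts,
\begin{equation*}
\begin{split}
\sum_{i,j}\lambda_i\lambda_j\,\Phi_2(\boldsymbol{y}_i-\boldsymbol{y}_j)
&=\int_0^1 {}_0F_1(;\gamma;-tx)\,P^{\prime}(t)\,\mathrm{d}t\\
&={}_0F_1(;\gamma;-x)\,P(1)+\frac{x}{\gamma}\int_0^1 {}_0F_1(;\gamma+1;-tx)\,P(t)\,\mathrm{d}t.
\end{split}
\end{equation*}

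The remaining task is to show that this quantity is nonnegative for every admissible $P$, and I expect this one-dimensional positivity to be the main obstacle. This is precisely where the hypothesis $\psi_2\le\alpha_{\max}=\frac{2\gamma-1}{4}$ must enter: it forces $(\psi_2,\psi_2+1,\gamma)\in\mathcal{P}_0$, i.e.\ the Cho et al.\ nonnegativity regime, and it bounds the exponents $\psi_2(\|\boldsymbol{y}_i-\boldsymbol{y}_j\|)$ that generate $P$, constraining the admissible functions $P$ tightly enough to offset the oscillatory kernel ${}_0F_1(;\gamma+1;-tx)$. Rather than a pointwise sign argument, I would establish the inequality through the squared-Bessel lower bounds for ${}_1F_2$ of the kind invoked in Section \ref{bivariate}, or through a dedicated estimate of the Hankel/Mellin transform of ${}_0F_1(;\gamma+1;-tx)$ paired against the measures produced by the bounded admissible exponents.
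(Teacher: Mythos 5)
Your proposal does not close. You correctly isolate the crux --- the mixing weight ${}_0F_1(;\gamma;-tx)$ in the representation ${}_1F_2(\alpha;\alpha+1,\gamma;-x)=\alpha\int_0^1 t^{\alpha-1}{}_0F_1(;\gamma;-tx)\,\mathrm{d}t$ changes sign, so exhibiting $t^{\psi_2(\|\cdot\|)}=e^{-(-\log t)\psi_2(\|\cdot\|)}$ as positive semidefinite (your Step 2, which is fine and is essentially the argument the paper uses elsewhere, in the proof of condition (4) of Theorem \ref{theorem2aa}) does not let you conclude by a nonnegative superposition --- but you never resolve it. The integration by parts makes no progress: ${}_0F_1(;\gamma+1;-tx)$ oscillates just as ${}_0F_1(;\gamma;-tx)$ does, the boundary term ${}_0F_1(;\gamma;-x)P(1)$ can be negative, and the closing step is deferred to unspecified ``squared-Bessel lower bounds'' or ``Hankel/Mellin estimates'' that are neither carried out nor obviously sufficient. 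As written, the proof establishes positive semidefiniteness of the elementary kernels $e^{-s\psi_2(\|\cdot\|)}$ but not of $\Phi_2$.

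The paper's proof avoids this obstruction entirely by working in the parameter rather than in the argument: with $x>0$ fixed, it views $\varphi_2:\alpha\mapsto{}_1F_2(\alpha;\alpha+1,\gamma;-x)$ as a function of $\alpha$ and differentiates the defining series \eqref{pFq} term by term in $\alpha$ (the coefficient $\Gamma(\alpha+n)\Gamma(\alpha+1)/[\Gamma(\alpha)\Gamma(\alpha+1+n)]=\alpha/(\alpha+n)$ makes this explicit), obtaining
\begin{equation*}
\frac{\partial^k \varphi_2}{\partial \alpha^k}(\alpha)=\frac{(-1)^{k}k!\,x}{(\alpha+1)^{k+1}\gamma}\,{}_{k+1}F_{k+2}\left(\alpha+1,\hdots,\alpha+1;\alpha+2,\hdots,\alpha+2,\gamma+1;-x\right).
\end{equation*}
The bound $\psi_2\le\alpha_{\max}=\frac{2\gamma-1}{4}$ is exactly what puts $(\alpha+1,\alpha+2,\gamma+1)$ in $\mathcal{P}_0$, so the ${}_{k+1}F_{k+2}$ factor is nonnegative as a beta mixture of nonnegative ${}_1F_2$ functions; hence $\varphi_2$ is completely monotone in $\alpha$ on $[0,\alpha_{\max}]$. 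Composing with $\psi_2$ (positive, $(q^{\prime}+1)$-times monotone derivative) via Gneiting's composition rule gives a $(q^{\prime}+2)$-times monotone, continuous radial function, which is positive semidefinite on $\mathbb{R}^{2q^{\prime}+1}$ by the Williamson--Micchelli criterion. If you want to salvage your route, you would need to prove the one-dimensional positivity statement you left open; the parameter-differentiation argument is the mechanism that actually does the work here.
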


\begin{lemma}
\label{cov1F2z}
Let $q, q^{\prime} \in \mathbb{N}$, $\gamma > 0$, $\psi_1$ a positive function in $\mathbb{R}_+$ with a $(q+1)$-times monotone derivative, and $\psi_2$ a positive function in $\mathbb{R}_+$ upper bounded by $\alpha_{\max}=\frac{2\gamma-1}{4}$ and with a $(q^{\prime}+1)$-times monotone derivative.
Then, the function $\Phi: \mathbb{R}^{2q+1} \times \mathbb{R}^{2q^{\prime}+1} \to \mathbb{R}$ defined by
\begin{equation}
    \label{covalemma1z}
    \Phi(\boldsymbol{x},\boldsymbol{y}) = \frac{1}{\psi_1(\| \boldsymbol{x}\|)} {}_1 F_2\left(\psi_2(\| \boldsymbol{y}\|);\psi_2(\| \boldsymbol{y}\|)+1,\gamma;-\psi_1(\| \boldsymbol{x}\|) \right), \quad \boldsymbol{x} \in \mathbb{R}^{2q+1}, \boldsymbol{y} \in \mathbb{R}^{2q^{\prime}+1},
\end{equation}
is positive semidefinite in $\mathbb{R}^{2q+1} \times \mathbb{R}^{2q^{\prime}+1}$ if $(\alpha+q+3,\alpha+q+4,\gamma+q+3) \in \mathcal{P}_0$.
\end{lemma}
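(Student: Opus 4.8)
The plan is to peel \eqref{covalemma1z} apart along its two spatial directions by means of the beta--Bessel mixture \eqref{mixbessel}, and then to reassemble the pieces using Lemma \ref{superlemma}, the Schur (Hadamard) product theorem and the closure of positive semidefinite kernels under nonnegative mixtures. Write $\psi_1 = \psi_1(\|\boldsymbol x\|)$ and $\psi_2 = \psi_2(\|\boldsymbol y\|)$ for brevity. A first reduction disposes of the prefactor $1/\psi_1$ through $1/\psi_1 = \int_0^\infty e^{-v\psi_1}\,\mathrm{d}v$. Because $\psi_1$ has a $(q+1)$-times monotone derivative, its Williamson representation exhibits $\psi_1(\|\cdot\|)$ as a truncated-power/variogram mixture (as in Example \ref{variogex}), hence conditionally negative semidefinite on $\mathbb{R}^{2q+1}$; by Lemma \ref{superlemma} each $e^{-v\psi_1(\|\boldsymbol x\|)}$ is then positive semidefinite in $\boldsymbol x$. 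Since a Schur product against a positive semidefinite factor preserves positive semidefiniteness and an integral over $v\ge 0$ preserves it as well, the statement reduces to the joint positive semidefiniteness of the prefactor-free kernel $(\boldsymbol x,\boldsymbol y)\mapsto {}_1F_2\!\left(\psi_2;\psi_2+1,\gamma;-\psi_1\right)$ on $\mathbb{R}^{2q+1}\times\mathbb{R}^{2q'+1}$, the extra $e^{-v\psi_1}$ factor being exactly what forces the parameter shift to be raised by one unit (from $q+2$ in Lemma \ref{cov1F2a} to $q+3$ here).

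For the reduced kernel I would specialize \eqref{mixbessel} to $\beta = \alpha+1$. Since $\Gamma(\alpha+1)/[\Gamma(\alpha)\Gamma(1)] = \alpha$ and $(1-t)^{\beta-\alpha-1}=1$, this yields
\begin{equation*}
{}_1F_2\!\left(\psi_2;\psi_2+1,\gamma;-\psi_1\right) = \psi_2\int_0^1 t^{\psi_2-1}\,{}_0F_1(;\gamma;-t\,\psi_1)\,\mathrm{d}t = \int_0^\infty \psi_2\,e^{-s\psi_2}\,{}_0F_1(;\gamma;-e^{-s}\psi_1)\,\mathrm{d}s,
\end{equation*}
after the substitution $t=e^{-s}$. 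The $\psi_2$-direction is governed by Lemma \ref{cov1F2g}, which is precisely the assertion that, for a frozen scalar argument, the $s$-integral above is a covariance in $\boldsymbol y$ on $\mathbb{R}^{2q'+1}$; this is where the bound $\psi_2\le\frac{2\gamma-1}{4}=\alpha_{\max}$ is consumed, and where $\psi_2(\|\cdot\|)$ being conditionally negative semidefinite on $\mathbb{R}^{2q'+1}$ (again by the multiply-monotone hypothesis on $\psi_2'$) guarantees that each $e^{-s\psi_2}$ is positive semidefinite. The $\psi_1$-direction is governed by Lemma \ref{cov1F2a} applied to ${}_1F_2(\alpha;\alpha+1,\gamma;-\psi_1)$ on $\mathbb{R}^{2q+1}$, whose hypothesis is exactly $(\alpha+q+3,\alpha+q+4,\gamma+q+3)\in\mathcal{P}_0$ read for every value $\alpha$ attained by $\psi_2$ (consistent with condition (5)(v) of Theorem \ref{theorem2aa}). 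The final assembly combines the two certified directions through a Schur product and a nonnegative mixture over the auxiliary variables.

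The hard part is that the variables do \emph{not} separate term by term at a fixed mixing value: the factor ${}_0F_1(;\gamma;-e^{-s}\psi_1)$ is oscillatory and is therefore not individually a covariance in $\boldsymbol x$, and $\psi_2\,e^{-s\psi_2}$ is not individually a covariance in $\boldsymbol y$ because $a\mapsto a\,e^{-sa}$ fails to be completely monotone. Positivity only survives after the $s$-integration couples the sign changes of ${}_0F_1$ to the weight $\psi_2\,e^{-s\psi_2}$, which is exactly the indivisible content packaged in Lemma \ref{cov1F2g}; a naive product decomposition would be false. The crux of the argument is thus to organize the double integral so that Lemmas \ref{cov1F2a} and \ref{cov1F2g} act on genuinely positive semidefinite blocks rather than on the individual oscillatory and non-monotone factors, and then to carry out the parameter bookkeeping---checking that the single hypothesis $(\alpha+q+3,\alpha+q+4,\gamma+q+3)\in\mathcal{P}_0$ simultaneously underwrites the order-$q$ subordination in $\boldsymbol x$, the order-$q'$ subordination in $\boldsymbol y$, and the $1/\psi_1$ reduction. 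I expect this coupling-and-bookkeeping step, rather than any of the individual monotonicity or mixture facts, to be where the real work lies.
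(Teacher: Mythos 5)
Your proposal stalls exactly at the step you yourself flag as ``the crux'': the joint positive semidefiniteness is never actually proved. What you assemble from Lemmas \ref{cov1F2a} and \ref{cov1F2g} are two \emph{marginal} facts --- positive semidefiniteness in $\boldsymbol{x}$ for each frozen value of $\psi_2$, and in $\boldsymbol{y}$ for each frozen value of $\psi_1$ --- and, as you correctly observe, the representation $\int_0^\infty \psi_2\,e^{-s\psi_2}\,{}_0F_1(;\gamma;-e^{-s}\psi_1)\,\mathrm{d}s$ does not split into a nonnegative mixture of products of factors that are separately positive semidefinite in $\boldsymbol{x}$ and in $\boldsymbol{y}$. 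Separate positive semidefiniteness in each variable does not imply joint positive semidefiniteness, and the Schur product theorem has nothing to act on until a genuine tensor-product decomposition is exhibited. The proposal is therefore a correct diagnosis of the difficulty followed by no resolution of it.

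The missing idea in the paper is to treat $\varphi(x,\alpha)=\frac{1}{x}\,{}_1F_2(\alpha;\alpha+1,\gamma;-x)$ as a single bivariate function and prove that it is \emph{bivariate multiply monotone} of order $(q+2,q^{\prime}+2)$: every mixed partial derivative $(-1)^{k+k^{\prime}}\partial_x^k\partial_\alpha^{k^{\prime}}\varphi$ with $k\le q+2$ is nonnegative, because term-by-term differentiation of the series exhibits it as a positive constant times ${}_{k^{\prime}+1}F_{k^{\prime}+2}(\alpha+k+1,\dots;\alpha+k+2,\dots,\gamma+k+1;-x)$, itself a beta mixture of nonnegative ${}_1F_2$ functions under the hypothesis $(\alpha+q+3,\alpha+q+4,\gamma+q+3)\in\mathcal{P}_0$. (This is also where the $1/x$ prefactor earns its keep, shifting $\gamma+n+k$ to $\gamma+n+k+1$ in the denominators; your reduction via $1/\psi_1=\int_0^\infty e^{-v\psi_1}\,\mathrm{d}v$ is legitimate but discards the very structure the argument exploits.) Composition with $\psi_1,\psi_2$ preserves bivariate multiple monotonicity, and a two-variable version of Williamson's representation \eqref{multiplymonotone} then writes $\varphi(\psi_1,\psi_2)$ as a nonnegative mixture of products $(1-t\,\psi_1)_+^{q+1}(1-t^{\prime}\psi_2)_+^{q^{\prime}+1}$. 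Only at that point does the kernel factor, each truncated power being an Askey-type covariance in $\mathbb{R}^{2q+1}$ and $\mathbb{R}^{2q^{\prime}+1}$ respectively, and only then do Schur's theorem and closure under nonnegative mixtures finish the proof. Without this bivariate-monotonicity and bivariate-Williamson step, your outline cannot be completed.
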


\section{Proofs}
\label{appendixB}

\begin{proof}[Proof of Theorem \ref{positivity}]
Let $(\alpha,\beta,\gamma) \in \mathcal{P}_d$.
As the complex extension of the generalized hypergeometric function $x \mapsto {}_1F_2(\alpha,\beta,\gamma,x)$, $x \in \mathbb{C}$, is an entire function not identically equal to zero, its zeroes (if they exist) are isolated. It follows that there exists an nonempty open interval $I \subseteq \mathbb{R}$ such that ${}_1F_2(\alpha,\beta,\gamma,x)$ does not vanish, hence is positive, for all $x \in I$. Accordingly, the support of the spectral density \eqref{spectraldensity2} contains a nonempty open set of $\mathbb{R}^d$, which implies that the associated covariance kernel is positive
definite \cite{Dolloff}.
\end{proof}

\begin{proof}[Proof of Theorem \ref{restriction}]
The claim stems from the fact that $g_{d-k}(\cdot; a,\alpha-\frac{k}{2},\beta-\frac{k}{2},\gamma-\frac{k}{2})$ is the same as $g_d(\cdot; a,\alpha,\beta,\gamma)$ and that $(\alpha-\frac{k}{2},\beta-\frac{k}{2},\gamma-\frac{k}{2}) \in \mathcal{P}_{d-k}$ as soon as $(\alpha,\beta,\gamma) \in \mathcal{P}_{d}$.
\end{proof}

\begin{proof}[Proof of Theorem \ref{extension}]
The proof is analog to that of Theorem \ref{restriction}, with the additional restriction to ensure that the extended covariance remains valid in $\mathbb{R}^{d+k}$. 
\end{proof}

\begin{proof}[Proof of Theorem \ref{continuity}]
The continuity and differentiability with respect to $r$ stem from the fact that the Gauss hypergeometric function $x \mapsto {}_2F_1(a_1,a_2;b_1;x)$ with $b_1-a_1-a_2>0$ is continuous on the interval $[0,1]$, equal to $1$ at $x=0$, and infinitely differentiable on $]0,1[$. One deduces the continuity and differentiability with respect to $a$ by noting that, for fixed $\alpha$, $\beta$ and $\gamma$, $g_d(r; a,\alpha,\beta,\gamma)$ only depends on $\frac{r}{a}$. Finally, the continuity and differentiability with respect to $\alpha$, $\beta$ and $\gamma$ stem from the fact that the exponential function of base $\left( 1-(\frac{r}{a})^2\right)_+$ and the gamma function are infinitely differentiable wherever they are defined, and the hypergeometric function ${}_2F_1$ is an entire function of its parameters. 
\end{proof}

\begin{proof}[Proof of Theorem \ref{differentiabilitya}]
From \eqref{2F1}, it is seen that $r \mapsto g_d(r; a,\alpha,\beta,\gamma)$ is of the order of  $(1-\frac{r}{a})^{\beta +\gamma - \alpha -\frac{d}{2}-1}$ as $r \to a^-$, while it is identically zero for $r \to a^+$. Hence, this function is $k$ times differentiable (with zero derivatives of order $1, 2, \hdots, k$) at $r=a$, if, and only if, $\beta - \alpha+\gamma > k+\frac{d}{2}+1$.
\end{proof}

\begin{proof}[Proof of Theorem \ref{differentiability0}]
Using formula E.2.3 of \cite{matheron1965variables}, one obtains, for $\alpha-\frac{d}{2} \not\in \mathbb{N}$:
\begin{equation}
\label{2F1c1}
\begin{split}
g_d(r; a,\alpha,\beta,\gamma)  &= {}_2F_1\left(\frac{d}{2}-\gamma+1,\frac{d}{2}-\beta+1;\frac{d}{2}-\alpha+1;\frac{r^2}{a^2} \right)\\
&+\frac{\Gamma(\frac{d}{2}-\alpha) \Gamma(\beta-\frac{d}{2}) \Gamma(\gamma-\frac{d}{2}) }{ \Gamma(\alpha-\frac{d}{2}) \Gamma(\beta-\alpha)\Gamma(\gamma-\alpha)} \left(\frac{r}{a}\right)^{2\alpha-d} \\
& \times {}_2F_1\left(\alpha-\beta+1,\alpha-\gamma+1;\alpha-\frac{d}{2}+1;\frac{r^2}{a^2} \right), \quad 0 \leq r < a. 
\end{split}
\end{equation}
The right-hand side of \eqref{2F1c1} is a power series of $r^2$, plus a power series of $r^2$ (with a constant nonzero term) multiplied by $r^{2\alpha-d}$. Since $2\alpha-d$ is not an even integer, $r \mapsto g_d(r; a,\alpha,\beta,\gamma)$ turns out to be $k$ times differentiable at $r=0$ if, and only if, $\alpha>\frac{k+d}{2}$. If $\alpha-\frac{d}{2} \in \mathbb{N}$, then formula E.2.4 of \cite{matheron1965variables} shows that $r \mapsto g_d(r; a,\alpha,\beta,\gamma)$ is a power series of $r^2$ plus a power series of $r^2$ (with a constant nonzero term) multiplied by $r^{2\alpha-d} \log(\frac{r}{a})$, and the same conclusion prevails: $r \mapsto g_d(r; a,\alpha,\beta,\gamma)$ is $k$ times differentiable at $r=0$ if, and only if, $\alpha>\frac{k+d}{2}$.  
\end{proof}

\begin{proof}[Proof of Theorem \ref{monotonicity}]
Using an integral representation of the Gauss hypergeometric function ${}_2F_1$ \citep[formula 9.111]{grad}, the restriction of the radial function $g_d$
on the interval $[0,a]$ can be written as follows:
\begin{equation}
\label{gd}
\begin{split}
g_d(r; a,\alpha,\beta,\gamma)  = & \frac{\Gamma(\gamma-\frac{d}{2}) }{\Gamma(\gamma-\alpha) \Gamma(\alpha-\frac{d}{2})} \left( 1 - \frac{r^2}{a^2} \right)^{\beta - \alpha+\gamma -\frac{d}{2}-1} \\
&\times \int_0^1 t^{\gamma-\alpha-1} (1-t)^{\alpha-\frac{d}{2}-1} \left(1+\frac{t}{1-t} \frac{r^2}{a^2} \right)^{\alpha-\beta} \text{d}t, \quad r \in [0,a].
%g_d(r; a,\alpha,\beta,\gamma)  = & \frac{\Gamma(\beta-\frac{d}{2}) \Gamma(\gamma-\frac{d}{2})}{\Gamma(\beta-\alpha+\gamma-\frac{d}{2}) \Gamma(\alpha-\frac{d}{2})} \left( 1 - \frac{r^2}{a^2} \right)^{\beta - \alpha+\gamma -\frac{d}{2}-1} \\
%& \times \frac{\Gamma(\beta-\alpha+\gamma-\frac{d}{2})}{\Gamma(\gamma-\alpha) \Gamma(\beta-\frac{d}{2})} \int_0^1 t^{\gamma-\alpha-1} (1-t)^{\beta-\frac{d}{2}-1} \left(1-t+t \frac{r^2}{a^2} \right)^{\alpha-\beta} \text{d}t, \quad r \in [0,a].
\end{split}
\end{equation}
Accordingly, on $[0,a]$, $r \mapsto g_d(r; a,\alpha,\beta,\gamma)$ appears as a beta mixture of powered quadratic functions of the form
$r \mapsto ( 1 - \frac{r^2}{a^2})^{\beta - \alpha+\gamma -\frac{d}{2}-1}$ multiplied by generalized Cauchy covariance functions of the form
$r \mapsto (1 + \frac{t}{1-t} \frac{r^2}{a^2})^{\alpha-\beta}$, with $t \in ]0,1[$, $a>0$ and $(\alpha,\beta,\gamma)\in \mathcal{P}_d$. Since all these functions are nonnegative and decreasing
on $[0,a]$, so is $r \mapsto g_d(r; a,\alpha,\beta,\gamma)$.

%Consider the integral representation \eqref{hygeo9} as a function of $r = \|\boldsymbol{h}\|$, $a$, $\alpha$, $\beta$ and $\gamma$. For $\gamma > \frac{d}{2}+2$, this function can be differentiated under the integral sign on the open interval $]0,a[$, based on the dominated convergence theorem. As the function is zero for $r>a$, one obtains:
%\begin{equation*}
%\label{hygeo9b}
%  \frac{\partial g_d(r; a,\alpha,\beta,\gamma)}{\partial r}  =  
% -\frac{ 2r \Gamma(\beta - \frac{d}{2}) \left(\gamma - \frac{d}{2} - 1\right)}{a^2 \Gamma(\alpha-\frac{d}{2}) \Gamma(\beta-\alpha)}  \int_{0}^1 t^{\alpha-\gamma} (1-t)_+^{\beta-\alpha-1} \left( t - \left(\frac{r}{a}\right)^2 \right)_+^{\gamma - \frac{d}{2} - 2} \text{d}t. % 
%\end{equation*}
%This partial derivative is negative on $]0,a[$ and zero on $]a,+\infty[$, which implies that $r \mapsto g_d(r; a,\alpha,\beta,\gamma)$ is decreasing in $r$ on $]0,a[$ and constant (actually, zero) on $[a,+\infty[$. Since $g_d(r; a,\alpha,\beta,\gamma)=g_d(r; a,\alpha,\gamma,\beta)$ due to the symmetry of the ${}_2F_1$ hypergeometric function, the same result holds if $\beta>\frac{d}{2}+2$.

The monotonicity in $r$ implies the monotonicity in $a$, insofar as $g_d(r; a,\alpha,\beta,\gamma)$ only depends on $\frac{r}{a}$ for fixed $\alpha$,$\beta$ and $\gamma$.

Consider the integral representation \eqref{hygeo9} as a function of $r = \|\boldsymbol{h}\|$, $a$, $\alpha$, $\beta$ and $\gamma$. Based on the dominated convergence theorem, this function can be differentiated under the integral sign with respect to parameter $\gamma$, which leads to:
\begin{equation*}
\label{hygeo9c}
\begin{split}
  \frac{\partial g_d(r; a,\alpha,\beta,\gamma)}{\partial \gamma}  &=  \frac{ \Gamma(\beta - \frac{d}{2})}{ \Gamma(\alpha-\frac{d}{2}) \Gamma(\beta-\alpha)} \\
  & \times \int_{0}^1 t^{\alpha-\gamma} (1-t)_+^{\beta-\alpha-1} \left( t - \left(\frac{r}{a}\right)^2 \right)_+^{\gamma - \frac{d}{2} - 1} \ln\left(1-\frac{r^2}{t a^2} \right)_+ \text{d}t.  
\end{split}
\end{equation*}
This equation includes the $r=0$ instance, as $\gamma \mapsto g_d(1; a,\alpha,\beta,\gamma)$ is identically equal to $1$. The partial derivative is therefore always negative (if $0<r<a$) or zero (if $r=0$ or $r>a$), implying that $\gamma \mapsto g_d(r; a,\alpha,\beta,\gamma)$ is decreasing or constant in $\gamma$, respectively. The same result holds by substituting $\beta$ for $\gamma$ owing to the symmetry of the ${}_2F_1$ function.
\end{proof}

\begin{proof}[Proof of Theorem \ref{montee}]
For $(\alpha,\beta,\gamma) \in \mathcal{P}_d$, the radial part of $\mathfrak{M}_k (G_d(\cdot,a,\alpha,\beta,\gamma))$ is the Hankel transform of order $d-k$ of $\widetilde{g}_d(\cdot,a,\alpha,\beta,\gamma)$. From \eqref{hankel}, one has
\begin{equation*}
\mathfrak{M}_k( G_d(\cdot,a,\alpha,\beta,\gamma)) = \frac{\zeta_d(a,\alpha,\beta,\gamma)}{\zeta_{d-k}(a,\alpha,\beta,\gamma)} G_{d-k}(\cdot,a,\alpha,\beta,\gamma).
\end{equation*}
Since $\mathcal{P}_d \subset \mathcal{P}_{d-k}$, it follows that $\mathfrak{M}_k (G_d(\cdot,a,\alpha,\beta,\gamma)) \in \mathcal{G}_{d-k}$, its radial part being 
\begin{equation*}
\frac{\zeta_d(a,\alpha,\beta,\gamma)}{\zeta_{d-k}(a,\alpha,\beta,\gamma)} g_{d-k}(\cdot,a,\alpha,\beta,\gamma) =  \frac{\zeta_d(a,\alpha,\beta,\gamma)}{\zeta_{d-k}(a,\alpha,\beta,\gamma)} \, g_{d}\left(\cdot,a,\alpha+\frac{k}{2},\beta+\frac{k}{2},\gamma+\frac{k}{2}\right).
\end{equation*}
\end{proof}

\begin{proof}[Proof of Theorem \ref{descente}]
The proof follows that of Theorem \ref{montee}. The condition $(\alpha-\frac{k}{2},\beta-\frac{k}{2},\gamma-\frac{k}{2}) \in \mathcal{P}_{d+k}$ ensures that the downgraded covariance is positive semidefinite in $\mathbb{R}^{d+k}$, based on Theorem \ref{theorem1}.
\end{proof}

\begin{proof}[Proof of Theorem \ref{theorem5}]
The proof relies on  expansion \eqref{2F1c1} of the radial function $r \mapsto g_d(r; a,\alpha,\beta,\gamma)$, valid for $r \in [0,a]$ and $\alpha-\frac{d}{2} \not \in \mathbb{N}$.
Using formulae 5.5.3 and 5.11.12 of \cite{olver2010nist}, as well as the theorem of dominated convergence to interchange limits and infinite summations, one finds the following asymptotic equivalence:
\begin{equation*}
\label{2F1d}
\begin{split}
g_d(r; a,\alpha,\beta,\gamma) &\sim  {}_0F_1\left( ;\frac{d}{2}-\alpha+1;\frac{\beta \gamma r^2}{a^2} \right) \\ &+\frac{\Gamma(\frac{d}{2}-\alpha)}{ \Gamma(\alpha-\frac{d}{2})} \left(\frac{\beta \gamma r^2}{a^2}\right)^{\alpha-\frac{d}{2}} {}_0F_1\left( ;\alpha-\frac{d}{2}+1;\frac{\beta \gamma r^2}{a^2} \right), \quad r \leq a,
\end{split}
\end{equation*}
as $\beta \to +\infty$ and $\gamma \to +\infty$. The left-hand side can be expressed in terms of modified Bessel functions of the first ($I_{\eta}$) and second ($K_{\eta}$) kinds thanks to formulae 5.5.3, 10.27.4 and 10.39.9 of \cite{olver2010nist}, which finally yields:
\begin{equation}
\label{2F1e}
\begin{split}
g_d(r; a,\alpha,\beta,\gamma) &\sim \Gamma\left(\frac{d}{2}-\alpha+1\right) \left(\frac{\sqrt{\beta \gamma} r}{a}\right)^{\alpha-\frac{d}{2}} I_{\frac{d}{2}-\alpha}\left(\frac{2\sqrt{\beta \gamma} r}{a}\right) \\
&+\frac{\Gamma(\frac{d}{2}-\alpha)}{ \Gamma(\alpha-\frac{d}{2})} \Gamma\left(\alpha-\frac{d}{2}+1\right) \left(\frac{\sqrt{\beta \gamma} r}{a}\right)^{\alpha-\frac{d}{2}} I_{\alpha-\frac{d}{2}}\left(\frac{2\sqrt{\beta \gamma} r}{a}\right) \\
& = 
\frac{2} {\Gamma\left(\alpha-\frac{d}{2}\right)} \left(\frac{\sqrt{\beta \gamma} r}{a}\right)^{\alpha-\frac{d}{2}} K_{\alpha-\frac{d}{2}}\left(\frac{2\sqrt{\beta \gamma} r}{a}\right), \quad r \leq a.
\end{split}
\end{equation} 
Accordingly, $g_d(\cdot; a,\alpha,\beta,\gamma)$ tends pointwise to the radial part of the Mat\'ern covariance \eqref{matern} by letting $\beta$ and $\gamma$ tend to infinity and $a$ be asymptotically equivalent to $2 b \sqrt{\beta \gamma}$. In particular, since $a$ tends to infinity, the pointwise convergence is true for any $r\geq 0$.
It is also true if $\alpha-\frac{d}{2} \in \mathbb{N}$, as it suffices to consider the asymptotic equivalence \eqref{2F1e} with $\alpha-\delta-\frac{d}{2}$ and $\delta>0$ and then to let $\delta$ tend to zero, both the Gauss hypergeometric and Mat\'ern covariances being continuous with respect to the parameter $\alpha$. Note that the conditions of Theorem \ref{theorem1} are fulfilled when $\alpha$ is fixed and greater than $\frac{d}{2}$ and $\beta$ and $\gamma$ become infinitely large, so that $g_d(\cdot; a,\alpha,\beta,\gamma)$ in \eqref{2F1e} is the radial part of a valid covariance kernel.
Finally, because $g_d(\cdot; a,\alpha,\beta,\gamma)$ is a decreasing function on any compact segment of $\mathbb{R}_+$ for sufficiently large $a$ and $\beta$ or $\gamma$ (Theorem \ref{monotonicity}) and the limit function (the radial part of the Mat\'ern covariance \eqref{matern}) is continuous on $\mathbb{R}_+$, Dini's second theorem implies that the pointwise convergence is actually uniform on any compact segment of $\mathbb{R}_+$. In turn, since all the functions are lower bounded by zero, uniform convergence on a compact segment of $\mathbb{R}_+$ implies uniform convergence on $\mathbb{R}_+$.
\end{proof}

The proofs of Theorems \ref{laguerre} to \ref{theorem6b} use of the same argument as above to identify pointwise convergence with uniform convergence. This argument will be omitted for the sake of brevity.

\begin{proof}[Proof of Theorem \ref{laguerre}]
The starting point is the expansion \eqref{2F1c1} of $g_d(\cdot; a,\alpha,\beta,\gamma)$ in $[0,a]$.
Using formulae 5.5.3 and 5.11.12 of \cite{olver2010nist} and the dominated convergence theorem to interchange limits and infinite summations, one finds the following asymptotic equivalence as $\gamma$ tends to infinity:
\begin{equation}
\label{laguerre2}
\begin{split}
g_d&\left(r; a,\alpha,\beta,\gamma\right) \sim {}_1F_1\left(\frac{d}{2}-\beta+1;\frac{d}{2}-\alpha+1;-\frac{\gamma \, r^2}{a^2}\right) \\
&+ {}_1F_1\left(\alpha-\beta+1;\alpha-\frac{d}{2}+1;-\frac{\gamma \, r^2}{a^2}\right) \frac{\Gamma(\frac{d}{2}-\alpha) \Gamma(\beta-\frac{d}{2})}{\Gamma(\alpha-\frac{d}{2}) \Gamma(\beta-\alpha)} \left(\frac{\gamma \, r^2}{a^2}\right)^{\alpha-\frac{d}{2}}, \quad 0 \leq r < a.
\end{split}
\end{equation}
Using formula D.8 of \cite{matheron1965variables} and letting $a \to +\infty$ such that $\frac{a}{\sqrt{\gamma}} \to b > 0$ yields the claim.
\end{proof} 

\begin{proof}[Proof of Theorem \ref{tricomi}]
The proof relies on \eqref{laguerre2} and formulae 5.5.3 and 13.2.42 of \cite{olver2010nist}.
\end{proof}

\begin{proof}[Proof of Theorem \ref{incgamma}]
The proof relies on \eqref{laguerre2} and formulae 8.2.3, 8.2.4, 8.5.1 and 13.6.3 of \cite{olver2010nist}.
\end{proof} 

\begin{proof}[Proof of Theorem \ref{theorem6}]
The proof follows from Theorem \ref{theorem5} and the fact that the Mat\'ern covariance
\eqref{matern} with scale parameter $b/(2 \sqrt{\alpha})$ and smoothness parameter $\alpha$ tends to the Gaussian covariance \eqref{gauss} as $\alpha \to +\infty$. 
Following \cite{Chernih}, the convergence can also be shown by noting that the spectral density \eqref{spectraldensity2} of the Gauss hypergeometric covariance is asymptotically equivalent to
\begin{equation*}
\label{spectraldensity2gauss}
\begin{split}
    \widetilde{G}_d({\boldsymbol u}; a,\alpha,\beta,\gamma) &\sim   \left(\frac{\pi a^2 \alpha }{\beta \gamma}\right)^{\frac{d}{2}} \, 
    \sum_{n=0}^{+\infty} \frac{1}{n!} \left(-\frac{\alpha (\pi a \|{\boldsymbol u}\|)^2}{\beta \gamma} \right)^n \\
    &= \left(\frac{\pi a^2 \alpha }{\beta \gamma}\right)^{\frac{d}{2}} \, \exp\left(-\frac{\alpha (\pi a \|{\boldsymbol u}\|)^2}{\beta \gamma} \right), \quad {\boldsymbol u} \in \mathbb{R}^d,
\end{split}
\end{equation*}
as $\alpha \to +\infty$, $\beta \to +\infty$ and $\gamma \to +\infty$. If, furthermore, $a \to +\infty$ such that $a \sqrt{\frac{\alpha}{\beta \gamma}} \to b > 0$, then one obtains:
\begin{equation*}
\label{spectraldensity3gauss}
    \widetilde{G}_d({\boldsymbol u}; a,\alpha,\beta,\gamma) \sim   \pi^{\frac{d}{2}} b^d \, \exp\left(-(\pi b \|{\boldsymbol u}\|)^2 \right), \quad {\boldsymbol u} \in \mathbb{R}^d,
\end{equation*}
which coincides with the spectral density of the Gaussian covariance \eqref{gauss} \citep{Arroyo2020, Lantuejoul}.
\end{proof}

\begin{proof}[Proof of Theorem \ref{theorem6b}]
The proof follows from the asymptotic equivalence \eqref{laguerre2} for $\gamma$ tending to infinity. As $\beta$ tends to $\alpha$ and $a$ tends to infinity in such a way that $\frac{a}{\sqrt{\gamma}}$ tends to $b>0$, the first term in the right-hand side of \eqref{laguerre2} tends to $\exp(-r^2/b^2)$ and the second term to zero.
\end{proof}

\begin{proof}[Proof of Lemma \ref{cov1F2a}]
One has $\Phi_1({\boldsymbol x}) = \varphi_1 \circ \psi_1(\|\boldsymbol{x}\|)$, where $\varphi_1: x \mapsto {}_1 F_2\left( \alpha;\beta,\gamma; -x\right)$ is an infinitely differentiable function on $\mathbb{R}_+$, with \citep[formula 16.3.1]{olver2010nist}
\begin{equation*}
    (-1)^k \frac{\partial^k \varphi_1}{\partial x^k}(x) = \frac{\Gamma(\alpha+k) \Gamma(\beta) \Gamma(\gamma)}{\Gamma(\alpha) \Gamma(\beta+k) \Gamma(\gamma+k)} {}_1 F_2\left( \alpha+k;\beta+k,\gamma+k; -x\right), \quad x \in \mathbb{R}_+, k \in \mathbb{N}.
\end{equation*}
If $(\alpha+q+2,\beta+q+2,\gamma+q+2) \in \mathcal{P}_0$, then, for any $k=0, \hdots, q+2$, $(\alpha+k,\beta+k,\gamma+k) \in \mathcal{P}_0$ and $(-1)^k \frac{\partial^k \varphi_1}{\partial x^k}$ is nonnegative on $\mathbb{R}_+$, hence $\varphi_1$ is $(q+2)$-times monotone. Since $\psi_1$ is positive and has a $(q+1)$-times monotone derivative, the composite function $\varphi_1 \circ \psi_1$ is $(q+2)$-times monotone \citep[proposition 4.5]{Gneiting1999}. The fact that this composite function is continuous implies that $\Phi_1$ is positive semidefinite in $\mathbb{R}^{2q+1}$ \citep[criterion 1.3]{Askey, Micchelli, Gneiting1999}. 
\end{proof}

\begin{proof}[Proof of Lemma \ref{cov1F2g}]
$\Phi_2({\boldsymbol y}) = \varphi_2 \circ \psi_2(\|\boldsymbol{y}\|)$, where $\varphi_2: \alpha \mapsto {}_1 F_2\left(\alpha;\alpha+1,\gamma; -x\right)$ is a nonnegative function on $[0,\alpha_{\max}]$, insofar as $(\alpha,\alpha+1,\gamma) \in \mathcal{P}_0$ as soon as $\alpha \leq \alpha_{\max}$. This function is infinitely differentiable; for $k \in \mathbb{N}^*$, its $k$-th derivative, obtained with a term-by-term differentiation of \eqref{pFq}, is
\begin{equation}
\label{derivativealpha1F2}
\begin{split}
   \frac{\partial^k \varphi_2}{\partial \alpha^k}(\alpha) &= \sum_{n=1}^{+\infty} \frac{(-1)^{k-1} k! \, n \, \Gamma(\gamma)}{n! (\alpha+n)^{k+1} \Gamma(\gamma+n)} \left(-x\right)^n \\
&= \frac{(-1)^{k} k! \, x}{(\alpha+1)^{k+1} \gamma} \sum_{n=0}^{+\infty} \frac{(\alpha+1)^{k+1} \Gamma(\gamma+1)}{n! (\alpha+1+n)^{k+1} \Gamma(\gamma+1+n)} \left(-x\right)^n \\
&= \frac{(-1)^{k} k! \, x}{(\alpha+1)^{k+1} \gamma} \, {}_{k+1}F_{k+2}\left(\alpha+1,\hdots,\alpha+1;\alpha+2,\hdots,\alpha+2,\gamma+1;-x\right).
\end{split}
\end{equation}
If $\alpha \in [0,\alpha_{\max}]$,
then $(\alpha+1,\alpha+2,\gamma+1) \in \mathcal{P}_0$ and ${}_{k+1}F_{k+2}(\alpha+1,\hdots,\alpha+1;\alpha+2,\hdots,\alpha+2,\gamma+1;-x)$
is nonnegative, as a beta mixture of nonnegative ${}_{1}F_{2}$ functions \citep[formula 16.5.2]{olver2010nist}, which implies that $\varphi_2$ is completely monotone on $[0,\alpha_{\max}]$.
Since $\psi_2$ is positive with values in $[0,\alpha_{\max}]$ and has a $(q^{\prime}+1)$-times monotone derivative, the composition $\varphi_2 \circ \psi_2$ is $(q^{\prime}+2)$-times monotone on $\mathbb{R}_+$.
As it is continuous, this entails that $\Phi_2$ is a positive semidefinite in $\mathbb{R}^{2q^{\prime}+1}$ \citep{Micchelli}.
\end{proof}

\begin{proof}[Proof of Lemma \ref{cov1F2z}]
Let introduce $\Phi(\boldsymbol{x},\boldsymbol{y}) = \varphi(\psi_1(\|\boldsymbol{x}\|),\psi_2(\|\boldsymbol{y}\|))$,
where $\varphi: (x,\alpha) \mapsto \frac{1}{x} \, {}_1 F_2\left(\alpha;\alpha+1,\gamma; -x\right)$ is nonnegative and infinitely differentiable on $\mathbb{R}_+^* \times [0,\alpha_{\max}]$.
From \eqref{derivativealpha1F2}, it comes, for $k, k^\prime \in \mathbb{N}$:
\begin{equation*}
\label{derivativexalpha1F2}
\begin{split}
 &\frac{\partial^{k+k^{\prime}} \varphi}{\partial x^k \, \partial \alpha^{k^{\prime}}}(x,\alpha) = \sum_{n=0}^{+\infty} \frac{(-1)^{k+k^{\prime}} k^{\prime}! \, \Gamma(\gamma)}{\Gamma(n+1) (\alpha+n+k+1)^{k^{\prime}+1} \Gamma(\gamma+n+k+1)} \left(-x\right)^{n} \\
&= \frac{(-1)^{k+k^{\prime}} k^{\prime}! \, \Gamma(\gamma)}{(\alpha+k+1)^{k^{\prime}+1} \Gamma(\gamma+k+1)} \\
& \quad \times {}_{k^{\prime}+1}F_{k^{\prime}+2}\left(\alpha+k+1,\hdots,\alpha+k+1;\alpha+k+2,\hdots,\alpha+k+2,\gamma+k+1;-x \right).
\end{split}
\end{equation*}
If $(\alpha+q+3,\alpha+q+4,\gamma+q+3) \in \mathcal{P}_0$, then, for any $k=0, \hdots, q+2$, $(\alpha+k+1,\alpha+k+2,\gamma+k+1) \in \mathcal{P}_0$ and the hypergeometric term ${}_{k^{\prime}+1}F_{k^{\prime}+2}$ is nonnegative, as a beta mixture of nonnegative ${}_{1}F_{2}$ terms. Under this condition, $(-1)^{k+k^{\prime}} \frac{\partial^{k+k^{\prime}} \varphi}{\partial x^k \, \partial \alpha^{k^{\prime}}}$ is nonnegative for $k=0, \hdots, q+2$ and any  $k^\prime \in \mathbb{N}$. Accordingly, $\varphi$ is a bivariate multiply monotone function of order $(q+2,q^\prime+2)$, and so is the composite function $\varphi(\psi_1,\psi_2)$ \citep[proposition 4.5]{Gneiting1999}.
Arguments in \cite{Williamson} generalized to functions of two variables imply that $\varphi(\psi_1,\psi_2)$ is a mixture of products of truncated power functions of the form \eqref{multiplymonotone} (one function of $x$ with power exponent $q+1$ times one function of $\alpha$ with power exponent $q^\prime+1$) and is the radial part of a product covariance kernel in $\mathbb{R}^{2q+1} \times \mathbb{R}^{2q^{\prime}+1}$.
\end{proof}

\begin{proof}[Proof of Theorem \ref{theorem2aa}]
We start proving (1). Conditions (i), (ii) and (v) imply the existence of a spectral density associated with each direct or cross covariance (Theorem \ref{theorem1}). Based on Cram\'er's criterion \citep{cramer, chiles2009geostatistics},  $\boldsymbol{\widetilde{G}}_d(\cdot; a\boldsymbol{1},\alpha \boldsymbol{1},\boldsymbol{\beta},\boldsymbol{\gamma},\boldsymbol{\rho})$ is a valid matrix-valued spectral density function if, and only if, $\boldsymbol{\widetilde{G}}_d(\boldsymbol{u}; a \boldsymbol{1},\alpha \boldsymbol{1},\boldsymbol{\beta},\boldsymbol{\gamma},\boldsymbol{\rho})$ is positive semidefinite for any vector $\boldsymbol{u} \in \mathbb{R}^d$. The key of the proof is to expand this matrix as a positive mixture of positive semidefinite matrices. Such an expansion rests on the following identity, which can be obtained by a term-by-term integration of the infinite series \eqref{pFq} defining the generalized hypergeometric function ${}_1F_2$ along with formula 3.251.1 of \cite{grad}:
\begin{equation}
\label{expansion2aa}
\begin{split}
    \int_0^1 & \int_0^1  {}_1F_2  \left(\alpha;\beta,\gamma;-t_1 t_2 (a\,x)^2\right) t_1^{\beta-1}(1-t_1)^{\beta_{ij}-\beta-1}  t_2^{\gamma-1}(1-t_2)^{\gamma_{ij}-\gamma-1} \text{d}t_1 \text{d}t_2 \\
    &= \frac{\Gamma(\beta) \Gamma(\beta_{ij}-\beta) \Gamma(\gamma) \Gamma(\gamma_{ij}-\gamma)}{\Gamma(\beta_{ij}) \Gamma(\gamma_{ij})} {}_1F_2\left(\alpha;\beta_{ij},\gamma_{ij};-(a\,x)^2\right), 
\end{split}
\end{equation}
for $x \geq 0, a>0, \alpha>0,  \beta_{ij}>\beta>0$ and $\gamma_{ij}>\gamma>0$. Accordingly, for $\boldsymbol{u} \in \mathbb{R}^d$:
\begin{equation*}
\label{expansion1baa}
\begin{split}
    &{\widetilde{\boldsymbol{G}}}_d(\boldsymbol{u}; {a}\boldsymbol{1},{\alpha}\boldsymbol{1},\boldsymbol{\beta},\boldsymbol{\gamma},\boldsymbol{\rho}) = \frac{\pi^{\frac{d}{2}} a^{d} \Gamma(\alpha)   \Gamma(\boldsymbol{\beta} - \frac{d}{2}) \Gamma(\boldsymbol{\gamma} - \frac{d}{2}) \boldsymbol{\rho}}{\Gamma(\alpha-\frac{d}{2}) \Gamma(\beta) \Gamma(\boldsymbol{\beta}-\beta) \Gamma(\gamma) \Gamma(\boldsymbol{\gamma}-\gamma)} \\
    &\times \int_0^1 \int_0^1 {}_1F_2\left(\alpha;\beta,\gamma;-{t_1 \, t_2 (\pi a \|\boldsymbol{u}\|)^2}\right) t_1^{\beta-1}(1-t_1)^{\boldsymbol{\beta}-\beta-1} t_2^{\gamma-1}(1-t_2)^{\boldsymbol{\gamma}-\gamma-1} \text{d}t_1 \text{d}t_2,
\end{split}
\end{equation*}
with the products, quotients and powers taken element-wise. ${}_1F_2\left(\alpha;\beta,\gamma;-{t_1 t_2 (\pi a \|\boldsymbol{u}\|)^2}\right)$ is nonnegative for any $t_1, t_2 \in [0,1]$ under Condition (vi) \citep{cho2020rational}. Under Conditions (iii) and (iv), $(1-t_1)^{\boldsymbol{\beta}}$ and $(1-t_2)^{\boldsymbol{\gamma}}$ are positive semidefinite matrices (Lemma \ref{superlemma}). Along with Condition (vii), ${\widetilde{\boldsymbol{G}}}_d(\boldsymbol{u}; a \boldsymbol{1},\alpha \boldsymbol{1},\boldsymbol{\beta},\boldsymbol{\gamma},\boldsymbol{\rho})$ is positive semidefinite for any $\boldsymbol{u}$ in $\mathbb{R}^d$, as the elewent-wise product of positive semidefinite matrices, which completes the proof for (1).

We now prove (2). Under Condition (vi), the generalized hypergeometric function ${}_1F_2(\alpha;\beta,\gamma,x)$ is positive and increasing in $x$ on $\mathbb{R}$ \citep[formula 16.3.1]{olver2010nist}. Therefore, if $\boldsymbol{a}$ fulfills Condition (i), $[{}_1F_2\left(\alpha;\beta,\gamma;-t_1 t_2(\,a_{ij}\, x)^2\right)]_{i,j=1}^p$ is positive semidefinite, as the sum of a \emph{min} matrix with positive entries \citep[problem 7.1.P18]{Horn} and a diagonal matrix with nonnegative entries. The proof of (1) can then be adapted in a straightforward manner, by substituting such a positive semidefinite matrix for the positive scalar ${}_1F_2\left(\alpha;\beta,\gamma;-t_1 t_2(\,a\, x)^2\right)$.

The proof of (3) follows that of (2) and relies on the fact that, under Conditions (i) and (vi), the matrix $[{}_1F_2\left(\alpha;\beta,\gamma;-t_1 t_2 (\,a_{ij}\, x)^2\right)]_{i,j=1}^p$ is positive semidefinite for any $t_1$, $t_2$ and $x$ (Lemma \ref{cov1F2a}).

The proof of (4) is similar to that of (1), with \eqref{expansion2aa} replaced by
\begin{equation*}
\label{expansion23}
\begin{split}
    \int_0^1 & \int_0^1  {}_1F_2  \left(\alpha_{ij};\alpha_{ij}+1,\gamma;-t_1 t_2 (a_{ij}\,x)^2\right) t_1^{\alpha_{ij}}(1-t_1)^{\beta_{ij}-\alpha_{ij}-2}  t_2^{\gamma-1}(1-t_2)^{\gamma_{ij}-\gamma-1} \text{d}t_1 \text{d}t_2 \\
    &= \frac{\Gamma(\alpha_{ij}+1) \Gamma(\beta_{ij}-\alpha_{ij}-1) \Gamma(\gamma) \Gamma(\gamma_{ij}-\gamma)}{\Gamma(\beta_{ij}) \Gamma(\gamma_{ij})} {}_1F_2\left(\alpha_{ij};\beta_{ij},\gamma_{ij};-(a_{ij}\,x)^2\right),
\end{split}
\end{equation*}
for $x \geq 0, a_{ij}>0, \beta_{ij}-1>\alpha_{ij}>0$ and $\gamma_{ij}>\gamma>0$ for $i,j$ in $[1,\hdots,p]$.
Under Condition (ii), the composite function $t \mapsto \exp(-x(\psi_2(t)-\psi_2(0)))$ is $(q^{\prime}+2)$-times monotone  \citep[proposition 4.5]{Gneiting1999}, hence it is a mixture of truncated power functions of the form \eqref{multiplymonotone} and is the radial part of a positive semidefinite function in $\mathbb{R}^{2q^{\prime}+1}$ for any $x>0$.
A classical result by \cite{Schoenberg} states that $\boldsymbol{x} \mapsto \psi_2(\|\boldsymbol{x}\|)-\psi_2(0)$ is a variogram in $\mathbb{R}^{2q^{\prime}+1}$, so  $\boldsymbol{\alpha}$ is
conditionally negative semidefinite (Example \ref{variogex}) and $[t_1^{\alpha_{ij}}]_{i,j=1}^p$ is positive semidefinite for any $t_1 \in [0,1]$ (Lemma \ref{superlemma}). 
Under Conditions (iii) and (iv), $[(1-t_1)^{\beta_{ij}-\alpha_{ij}}]_{i,j=1}^p$ and $[(1-t_2)^{\gamma_{ij}}]_{i,j=1}^p$ are positive semidefinite for any $t_1, t_2 \in [0,1]$ (Lemma \ref{superlemma}).
Under Condition (ii), $[{}_1F_2  (\alpha_{ij};\alpha_{ij}+1,\gamma;-t_1 t_2 (a\,x)^2)]_{i,j=1}^p$ is also positive semidefinite for any $t_1, t_2 \in [0,1]$, $a>0$, $x>0$ (Lemma \ref{cov1F2g}). As $(\alpha_{ij}+1,\alpha_{ij}+2,\gamma+1) \in \mathcal{P}_0$, the generic entry of this matrix decreases with $a$ \citep[formula 16.3.1]{olver2010nist}, hence the matrix $[{}_1F_2  (\alpha_{ij};\alpha_{ij}+1,\gamma;-t_1 t_2 (a_{ij}\,x)^2)]_{i,j=1}^p$ has increased diagonal entries and is still positive semidefinite.
Finally, Condition (v) and Schur's product theorem imply that ${\widetilde{\boldsymbol{G}}}_d(\boldsymbol{u}; \boldsymbol{a},\boldsymbol{\alpha},\boldsymbol{\beta},\boldsymbol{\gamma},\boldsymbol{\rho})$ is
positive semidefinite for any $\boldsymbol{u}$ in $\mathbb{R}^d$, as the elewent-wise product of positive semidefinite matrices, which completes the proof of (4).

The proof of (5) follows the same line of reasoning as that of (4). The positive semidefiniteness of $[a_{ij}^{-2} \, {}_1F_2  (\alpha_{ij};\alpha_{ij}+1,\gamma;-t_1 t_2 (a_{ij}\,x)^2)]_{i,j=1}^p$ now stems from Conditions (i), (ii) and (v) together with Lemma \ref{cov1F2z}.

\end{proof}
\end{appendix}

\section*{Acknowledgements}
The authors acknowledge the funding of the National Agency for Research and Development of Chile, through grants ANID/FONDECYT/REGULAR/No.\ 1210050 (X. Emery and A. Alegr\'ia) and ANID PIA AFB180004 (X. Emery).

%\bibliographystyle{imsart-number}
%\bibliography{mybib}
%\end{document}

\bibliography{mybib}
\bibliographystyle{apalike}

\end{document}